\newtheorem{defn}{Definition}
\newtheorem{thm}{Theorem}
\newtheorem{lem}{Lemma}
\newtheorem{prop}{Proposition}
\newtheorem{cor}{Corollary}
\newtheorem{claim}{Claim}
\DeclareMathOperator{\diam}{diam}
\DeclareMathOperator{\cdist}{cdist}
\DeclareMathOperator{\supp}{supp}
\newcommand{\cd}{\mathcal{CD}}
\DeclareMathOperator{\conv}{conv}
\DeclareMathOperator{\Ker}{Ker}
\newcommand{\R}{\mathds{R}}
\DeclareMathOperator{\fstabOp}{FSTAB}
\newcommand{\fstab}{\fstabOp}
\DeclareMathOperator{\matchOp}{MATCH}
\newcommand{\match}{\matchOp}
\DeclareMathOperator{\permatchOp}{PERFECT\,MATCH}
\newcommand{\permatch}{\permatchOp}
\DeclareMathOperator{\tsOp}{TS}
\newcommand{\ts}{\tsOp}
\author[1]{Sean Kafer}
\author[2]{Kanstantsin Pashkovich}
\author[1]{Laura Sanit\`a}
\affil[1]{Department of Combinatorics and Optimization, University of Waterloo, 
Canada. \texttt{\{skafer, lsanita\}@uwaterloo.ca}}
\affil[2]{Simons Institute for the Theory of Computing, UC Berkeley, 
USA. \texttt{kpashkov@berkeley.edu}}
\begin{document}

\title{On the Circuit Diameter of some Combinatorial Polytopes}
\maketitle

\abstract{
The \emph{combinatorial diameter} of a polytope $P$ is the maximum value of a shortest path between two vertices of $P$, where the path uses the edges of $P$ only. In contrast to the combinatorial diameter, the \emph{circuit diameter} of $P$ is defined as the maximum value of a shortest path between two vertices of $P$, where the path uses \emph{potential} edge directions of $P$ i.e., all edge directions that can arise by translating some of the facets of $P$. 

In this paper, we study the circuit diameter of polytopes corresponding to classical combinatorial optimization problems, such as the Matching polytope, the Traveling Salesman polytope and the 
Fractional Stable Set polytope.
}

\section{Introduction}
For a polytope $P \subseteq \mathbb R^d$, the \emph{1-skeleton} of $P$ is the graph given by the set of vertices 
($0$-dimensional faces) of $P$, and the set of edges ($1$-dimensional faces) of $P$. The \emph{combinatorial diameter}
of $P$ is the maximum shortest path distance between two vertices in this graph.
Giving bounds on the combinatorial diameter of polytopes is a central question in discrete mathematics
and computational geometry. Combinatorial diameter is fundamental to the theory of linear programming due to the long standing open question about existence of a pivoting rule that yields a polynomial runtime for the Simplex algorithm. Indeed, existence of such a pivoting rule requires a general polynomial bound on the combinatorial diameter of a polytope.

The most famous conjecture in this context is the \emph{Hirsch Conjecture}, proposed in 1957, which
states that the combinatorial diameter of any $d$-dimensional polytope with $f$ facets is at most $f-d$.  
While this conjecture has been disproved for both unbounded polytopes \cite{KW67} and bounded ones \cite{Santos2012}, its \emph{polynomial} version is still open i.e., it is not known whether there is some polynomial function of $f$ and $d$ which upper bounds the combinatorial diameter in general. Currently the best known upper bound on the diameter is exponential in $d$  \cite{S16}.

Recently researchers started investigating whether the bound $f-d$ is a valid upper bound for some different (more powerful) notions of diameter for polytopes. The present work is concerned with one such notion of diameter: the \emph{circuit diameter} of a polytope, formalized by Borgwardt et al. \cite{BFH15}.
Given a polytope of the form $P=\{\bm{x}\in\R^n:A\bm{x}=\bm{b}$,\,  $B\bm{x}\leq \bm{d}\}$ for some rational matrices $A$ and $B$ and rational vectors $\bm{b}$ and $\bm{d}$, the \textit{circuits} of $P$ are the set of \emph{potential}
edge directions that can arise by varying $\bm{b}$ and $\bm{d}$ (see Section \ref{sec:preliminaries} for a formal definition). Starting from a point in $P$ one is allowed to move along any circuit direction until the boundary of $P$ is reached (see Section \ref{sec:preliminaries} for a formal definition). 
Since for every polytope the set of circuit directions contains all edge directions, the combinatorial diameter is always an upper bound on the circuit diameter. Thus even if the Hirsch Conjecture does not hold for the combinatorial diameter, its analogue may be true for the circuit diameter. In particular, Borgwardt et al.\cite{BFH15} conjectured that the circuit diameter is at most $f-d$ for every $d$-dimensional polytope with $f$ facets. We refer the reader to \cite{BSY16} for
recent progress on this conjecture.

Besides studies of upper bounds on combinatorial diameter for general polytopes, there is a long history of studies of such upper bounds for some special classes of polytopes. In particular, many researchers have investigated the combinatorial diameter of polytopes corresponding to classical combinatorial optimization problems. Prominent examples of these polytopes for which the combinatorial diameter have been widely studied are Transportation and Network Flow polytopes \cite{B84,BalinskiRussakoff74,BFH16,BDF16,BHS06}, Matching polytopes \cite{BalinskiRussakoff74,Chvatal75}, 
Traveling Salesman (TSP) polytopes \cite{RC98,GP86}, 
and many others. In this context, there are some questions and conjectures regarding the tightness of the developed bounds which are open, and it is natural to investigate them using a more powerful notion of diameter, like the circuit diameter. The authors of \cite{BFH15} gave upper bounds on the circuit diameter of
Dual Transportation polytopes on bipartite graphs, and later in \cite{BFH16} gave upper bounds on the circuit diameter of Dual Network flow polytopes. 

\paragraph{Our results.} In this paper, we study the circuit diameter of the Matching polytope, the Perfect Matching polytope, 
the TSP polytope, and the Fractional Stable Set polytope. 

Our first result (in Section \ref{sec:matching}) is an exact characterization of the circuit diameter of the Matching polytope (resp., Perfect Matching polytope), which is the convex hull of characteristic vectors of matchings (resp., perfect matchings) in a complete graph with $n$ nodes. In particular, it is well-known that the combinatorial diameter of the Matching polytope equals $\lfloor \frac{n}{2} \rfloor$ \cite{BalinskiRussakoff74,Chvatal75}. In Section \ref{sec:matching}, we show that the circuit diameter of the Matching polytope is upper bounded by a constant in contrast to the combinatorial diameter. In particular, we show that the circuit diameter of the Matching polytope equals $2$ for all $n \geq 7$.  To this aim, we show that for any two different matchings such that one is not contained in the other, the corresponding two vertices are one circuit step away from each other or the corresponding vertices have a common neighbour vertex in the Matching polytope, and therefore their circuit distance is always at most~$2$.  For the Perfect Matching polytope, we show that if $n\neq 8$ the circuit diameter is $1$; and if $n=8$ the circuit diameter is $2$. In contrast, the combinatorial diameter of the Perfect Matching polytope is known to be $2$ for all $n\geq 8$ \cite{PR74}.

In Section \ref{sec:TSP}, we give an exact characterization of the circuit diameter of the TSP polytope, which is the convex hull of all tours (i.e., Hamiltonian cycles) in a complete graph with $n$ nodes. 
It is known that the combinatorial diameter of the TSP polytope is at most 4 \cite{RC98}. In fact, Gr\"otschel and Padberg conjectured in \cite{GP86} that the combinatorial diameter of the TSP polytope is at most $2$, and this conjecture is still open after more than 30 years.
In Section \ref{sec:TSP}, we show that this conjecture holds for the circuit diameter.  In fact, the circuit diameter of the TSP polytope equals $1$ whenever $n\neq 5$; while for $n=5$ the circuit diameter is $2$.
This result is proven by showing that for every two tours in a complete graph, the corresponding vertices are one circuit step from each other whenever $n>5$. Note that no linear description of the TSP polytope is known for general graphs. We achieve the above results for the TSP polytope by using only two famous classes of its facets: namely, \emph{subtour} inequalities and (certain) \emph{comb} inequalities \cite{Grotschel79}.

Finally, we consider the Fractional Stable Set polytope in Section \ref{sec:fSTAB}. This is the polytope given by the standard LP relaxation of the stable set problem for a graph $G$ with $n$ nodes. The Fractional Stable Set polytope was widely studied. In particular, it is known that this polytope is half-integral \cite{B70}, and that the vertices of this polytope have a nice graph interpretation: namely, they can be mapped to subgraphs of $G$ with all connected components being \emph{trees} and \emph{$1$-trees}\footnote{A $1$-tree is a tree plus one edge between two nodes spanned by the tree.}~\cite{CC09, CMN12}. 
This graphical interpretation of vertices was used in \cite{Michini2014} to prove that the combinatorial diameter of the Fractional Stable Set polytope is upper bounded by $n$.  In Section \ref{sec:fSTAB}, we provide a characterization for circuits of this polytope. Specifically, we show that every circuit corresponds to a connected (non necessarily induced) bipartite subgraph of $G$. Our characterization allows us to show that the circuit diameter of the Fractional Stable Set polytope can be essentially upper bounded by the \emph{diameter} of the graph $G$, which is significantly smaller than $n$ in many graphs.

\section{Preliminaries}
\label{sec:preliminaries}
Let $P$ be a polytope of the form $P=\{\bm{x}\in\R^n:A\bm{x}=\bm{b},\, B\bm{x}\leq \bm{d}\}$ for rational matrices $A$ and $B$ and rational vectors $\bm{b}$ and $\bm{d}$.  Let $\Ker(A)$ denote the kernel of $A$ i.e., $\Ker(A):=\{\bm y\in \R^n : A y={\bf 0}\}$. Furthermore, we denote by  $\supp(\bm{x})$ the support of a vector $\bm{x}$.

When talking about the circuit diameter of a polytope $P$, unless specified we assume  that the system of inequalities describing~$P$ is \emph{minimal} with respect to its constraints i.e., each inequality of the above system defines a facet of~$P$. Note that in contrast to the combinatorial diameter, the circuit diameter depends on the linear description of a polytope.  In fact, redundant inequalities might become facet-defining after translating the corresponding hyperplanes. 

\begin{defn}
A non-zero vector $\bm g \in \R^n$ is a circuit of $P$  if 
\begin{enumerate}[(i)]
	\item $\bm g \in \Ker(A)$
	\item $\supp(B \bm g)$ is not contained in any of the sets from the collection $\{\supp(B \bm y): \bm y \in \Ker(A), \bm y\neq {\bf 0} \}$. (i.e., $B\bm g$ is support-minimal in the collection $\{B \bm y : \bm y \in \Ker(A), \bm y \neq 0 \}$)
\end{enumerate}
\end{defn}

Note that if $\bm{c}$ is a circuit of $P$, so is $-\bm{c}$. Given the notion of circuits, we can formally define \emph{circuit steps}, \emph{circuit walks}, and \emph{circuit distance}. 

\begin{defn}
Given $\bm{x}' \in P$, we say that $\bm{x}'' \in P$ is one \textit{circuit step} from $\bm{x}' $, if $\bm{x}''=\bm{x}'+\alpha\bm{c}$ where $\bm{c}$ is a circuit of $P$ and $\alpha>0$ is chosen to be as large as possible so that $\bm{x}'+\alpha\bm{c} \in P$.
\end{defn}

Note that this definition does not specify that $\bm{x}'$ or $\bm{x}''$ are vertices of $P$.

\begin{defn}
Given two points $\bm{x}'$ and $\bm{x}''$ in $P$, a circuit walk from $\bm{x}'$ to $\bm{x}''$ is a sequence of points in $P$, $\bm{x}'=\bm{z}^0, \bm{z}^1,\cdots,\bm{z}^{l-1},\bm{z}^l=\bm{x}''$, where $\bm{z}^i$ is one circuit step from $\bm{z}^{i-1}$, for all $i=1,\cdots,l$.  We say such a circuit walk has length $l$.
\end{defn}

\begin{defn}
Given two points $\bm{x}'$ and $\bm{x}''$ in $P$, the circuit distance from $\bm{x}'$ to $\bm{x}''$, called $\cdist(\bm{x}',\bm{x}'')$, is the length of a shortest circuit walk from $\bm{x}'$ to $\bm{x}''$.
\end{defn}

Note that from the latter two definitions, it follows that a circuit walk from $\bm{x}'$ to $\bm{x}''$ might not always be reversible. For example, let two points $\bm{x}'$ and $\bm{x}''$ be such that $\bm{x}''$ is one circuit step from $\bm{x}'$ i.e., we have that $\bm{x}''=\bm{x}'+\alpha\bm{c}$ and $\alpha>0$ is as large as possible so that $\bm{x}'+\alpha\bm{c}\in P$. However, it may be the case that $\bm{x}''+\alpha'(-\bm{c})\in P$ for some $\alpha'$ such that $\alpha'>\alpha$; and so $\bm{x}'$ is not one circuit step from $\bm{x}''$.  Therefore, it may be the case that $\cdist(\bm{x}',\bm{x}'')\neq \cdist(\bm{x}'',\bm{x}')$. We refer to \cite{F15} for an extensive discussion about circuit distance.

\begin{defn}
Given a polytope $P$, the circuit diameter of $P$, or $\cd(P)$, is the maximum circuit distance between any pair of vertices of $P$.
\end{defn}

Given a system of linear equations $\{A\bm{x} =\bm{0}\,, B\bm{x} = \bm{0}\}$, we say that a vector  $\bm{c}$ is a unique (up to scaling) solution of the system, if every vector $\bm{y}$ satisfying $A\bm{y} =\bm{0}\,, B\bm{y} = \bm{0}$ is of the form $\bm{y} = \lambda\bm{c}$ for some $\lambda \in \R$. 
The following proposition gives an alternative definition of circuits, that will be useful later. It is an easy corollary of the results in \cite{F15}, we report a proof here for completeness.

\begin{prop}\label{prop:system_defn}
Given a polytope $P=\{\bm{x}\in\R^n: A\bm{x} =\bm{b}\,, B\bm{x} \le \bm{d}\}$, a non-zero vector $\bm{c}\in\R^n$ is a circuit if and only if $\bm{c}$ is  a unique (up to scaling) non-zero solution of $\{A \bm{y}=\bm{0}\,, B'\bm{y}=\bm{0}\}$ where $B'$ is a submatrix of $B$.
\end{prop}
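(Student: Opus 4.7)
The plan is to prove both directions by exhibiting a specific submatrix $B'$: namely, the one whose rows are exactly the rows $B_i$ of $B$ with $B_i\bm{c}=0$, i.e., those indexed by the complement of $\supp(B\bm{c})$. This choice will make $\bm{c}$ a distinguished solution of $\{A\bm{y}=\bm{0},\, B'\bm{y}=\bm{0}\}$, and the equivalence reduces to matching the support-minimality condition for circuits against uniqueness up to scaling of this solution.

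For the forward direction, I would take a circuit $\bm{c}$ and define $B'$ as above, so that $A\bm{c}=\bm{0}$ and $B'\bm{c}=\bm{0}$ hold by construction. Given any other non-zero $\bm{y}$ with $A\bm{y}=\bm{0}$ and $B'\bm{y}=\bm{0}$, the selection of rows for $B'$ immediately yields $\supp(B\bm{y}) \subseteq \supp(B\bm{c})$. The key move is then a subtraction trick: pick $i_0 \in \supp(B\bm{c})$, set $\lambda := (B_{i_0}\bm{y})/(B_{i_0}\bm{c})$, and form $\bm{z} := \bm{y} - \lambda\bm{c}$. Then $\bm{z} \in \Ker(A)$ and $\supp(B\bm{z}) \subsetneq \supp(B\bm{c})$, since $B_{i_0}\bm{z}=0$ while every row outside $\supp(B\bm{c})$ was already zeroed on both $\bm{y}$ and $\bm{c}$. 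Support-minimality of $\bm{c}$ then forces $\bm{z}=\bm{0}$, i.e., $\bm{y}=\lambda\bm{c}$.

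For the backward direction, I would assume $\bm{c}$ is the unique (up to scaling) non-zero solution of $\{A\bm{y}=\bm{0},\, B'\bm{y}=\bm{0}\}$ for some submatrix $B'$ of $B$, and argue by contradiction. If some $\bm{y}\in\Ker(A)\setminus\{\bm{0}\}$ had $\supp(B\bm{y}) \subsetneq \supp(B\bm{c})$, then each row of $B'$, being zero on $\bm{c}$ and hence indexed outside $\supp(B\bm{c})$, would also be zero on $\bm{y}$; so $B'\bm{y}=\bm{0}$. Uniqueness would then yield $\bm{y}=\lambda\bm{c}$ with $\lambda \neq 0$, forcing $\supp(B\bm{y})=\supp(B\bm{c})$ and contradicting the strict inclusion.

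The only step that needs genuine care is the subtraction trick in the forward direction, which converts a hypothetical second solution into a vector with strictly smaller $B$-support and thereby triggers minimality. Everything else is bookkeeping around the specific choice of $B'$; the degenerate case $B\bm{c}=\bm{0}$ is handled by taking $B'=B$ and noting that any circuit then spans $\Ker(A)\cap\Ker(B)$.
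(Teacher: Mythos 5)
Your proof is correct and follows essentially the same route as the paper: both take $B'$ to be exactly the rows of $B$ that annihilate $\bm{c}$, and the backward direction is identical, while your forward direction fills in, via the explicit subtraction trick, the step the paper leaves implicit after noting that the full constraint matrix has full column rank. A minor remark: the degenerate case $B\bm{c}=\bm{0}$ you flag at the end is in fact vacuous for non-zero $\bm{c}\in\Ker(A)$, since that same full-column-rank observation gives $\Ker(A)\cap\Ker(B)=\{\bm{0}\}$.
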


\begin{proof}
Let us be given a non-zero vector $\bm c$ such that $A \bm c=0$. Let $B'$ be the maximal (with respect to the number of rows) submatrix of $B$ such that $B'\bm{c}=\bm{0}$. Since $P$ is a polytope the block matrix
$$
\begin{pmatrix}
A\\
B
\end{pmatrix}
$$
has full column rank. Hence, there exists no non-zero vector $\bm{d}$, $A\bm{d}=\bm{0}$, $\supp(B\bm{d})\subset \supp(B\bm{c})$ only if there is  a unique (up to scaling) non-zero solution of $\{A \bm{y}=\bm{0}\,, B'\bm{y}=\bm{0}\}$.

Now, let $B'$ be a submatrix of $B$ such that the system $A\bm{y}=\bm{0}\,,B'\bm{y}=\bm{0}$ has a unique (up to scaling) non-zero solution $\bm{c}$.  Suppose for the sake of contradiction that $\bm{c}$ is not a circuit of $P$.  Then there exists a non-zero vector $\bm{d}$ such that $A\bm{d}=\bm{0}$ and $\supp(B\bm{d})\subset \supp(B\bm{c})$.  In particular, this means that $A\bm{d}=\bm{0}\,,B'\bm{d}=\bm{0}$. Hence $\bm{d}$ is a scaling of $\bm{c}$; and thus $\bm{c}$ is a circuit as desired.
\end{proof}

The next lemma will be used in Section~\ref{sec:matching} to study the circuit diameter of polytopes with linear descriptions, where the coefficients in each inequality are all non-negative or all non-positive.

\begin{lem}\label{lem:nonnegative_circuit}
Let $Q \subseteq\R^n$ be a polytope of the form $Q:=\{\bm{x}\in\R^n:A\bm{x} \leq \bm{b},\, B\bm{x}\leq \bm{d}\}$, where all entries of $A$ are non-negative and all entries of $B$ are non-positive.  Then every circuit $\bm{c}\in\R^n$ of $Q$ with $\bm{c}\geq \bm{0}$ or $\bm{c}\leq \bm{0}$ has exactly one non-zero coordinate. 
\end{lem}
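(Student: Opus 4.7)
The plan is to apply Proposition \ref{prop:system_defn} after merging $A$ and $B$ into a single inequality matrix $M := \binom{A}{B}$. Since $Q$ has no equality constraints, Proposition \ref{prop:system_defn} (with empty equality block) states that a non-zero $\bm{c}$ is a circuit of $Q$ if and only if $\bm{c}$ is the unique (up to scaling) non-zero solution of $M' \bm{y} = \bm{0}$ for some submatrix $M'$ of $M$; in particular one may take $M'$ to be the submatrix consisting of all rows $m_i$ of $M$ with $m_i \cdot \bm{c} = 0$.

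Assume for contradiction that $\bm{c}$ is a circuit with $\bm{c} \ge \bm{0}$ (the case $\bm{c} \le \bm{0}$ is symmetric, since $-\bm{c}$ is also a circuit of $Q$) and that $|\supp(\bm{c})| \ge 2$. Fix any coordinate $k \in \supp(\bm{c})$, so $c_k > 0$, and observe that the standard basis vector $\bm{e}_k$ is not a scalar multiple of $\bm{c}$. The key step is to show that $M' \bm{e}_k = \bm{0}$, which will contradict the uniqueness guaranteed by Proposition \ref{prop:system_defn}.

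The crucial observation is that the sign hypotheses force \emph{termwise} cancellation in tight constraints. For any row $a_i$ of $A$ with $a_i \cdot \bm{c} = 0$, the entries $a_{ij} \ge 0$ and $c_j \ge 0$ make every summand in $\sum_j a_{ij} c_j$ non-negative; a vanishing sum of non-negatives is termwise zero, so in particular $a_{ik} c_k = 0$, and together with $c_k > 0$ this gives $a_{ik} = 0$, i.e.\ $a_i \cdot \bm{e}_k = 0$. The analogous argument, with signs flipped, applies to any row $b_i$ of $B$ with $b_i \cdot \bm{c} = 0$: now all summands $b_{ij} c_j$ are non-positive, so again each vanishes and $b_{ik} = 0$. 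Hence every row $m_i$ of $M'$ satisfies $m_i \cdot \bm{e}_k = 0$.

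I do not anticipate a serious obstacle; the entire argument hinges on that elementary sign observation and the fact that Proposition \ref{prop:system_defn} is available. The only minor point to get right is the reduction of the lemma's two inequality blocks into the single inequality matrix format used by Proposition \ref{prop:system_defn}.
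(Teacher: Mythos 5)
Your proposal is correct and follows essentially the same route as the paper: both invoke Proposition~\ref{prop:system_defn} to get a homogeneous system whose unique (up to scaling) non-zero solution is $\bm{c}$, both exploit the sign pattern of $A$, $B$ and of $\bm{c}$ to obtain termwise cancellation on all tight rows, and both then substitute a single standard basis vector $\bm{e}_k$ (with $k \in \supp(\bm{c})$) to violate uniqueness. The only cosmetic difference is that you merge $A$ and $B$ into one inequality matrix $M$ up front, whereas the paper keeps the two blocks nominally separate when applying the proposition.
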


\begin{proof}
Suppose that $\bm{c}$ is a circuit of $Q$ which has at least two non-zero coordinates.  We may assume that $\bm{c}\geq \bm{0}$, as the case where $\bm{c}\leq \bm{0}$ is identical.  Then by Proposition~\ref{prop:system_defn}, $\bm{c}$ is the unique (up to scaling) non-zero solution of $A'\bm{y}=\bm{0}$, $B'\bm{y}=\bm{0}$ where $A'$, $B'$ are some submatrices of $A$, $B$ respectively.  Note that since all entries of $A'$ and $\bm{c}$ are non-negative and $A'\bm{c}=\bm{0}$, we have that for every $i\in \supp(\bm{c})$ the $i$-th column of $A'$ equals $\bm{0}$. Analogously, for every $i\in \supp(\bm{c})$ the $i$-th column of $B'$ equals $\bm{0}$.

Let $i$ be any index such that $\bm{c}_i >0$. Define the vector $\bm{d}$ as
$$
	\bm{d}_j:=\begin{cases}
				1 &\text{  if  } j=i\\
				0 &\text{otherwise}
			\end{cases}\,.
$$
Then $\bm{d}$ is also a solution to $A'\bm{y}=\bm{0}$, $B'\bm{y}=\bm{0}$ and is not a scaling of $\bm{c}$, contradicting that $\bm{c}$ is a circuit. 
\end{proof}
 \bigskip

\section{Matching Polytope}
\label{sec:matching}

The Matching polytope is defined as the convex hull of all characteristic vectors of matchings in a complete graph i.e.,
$$
	P_{\match}(n):=\conv \left \{ \chi(M)\,:\, M \text{ is a matching in } K_n \right \}\,,
$$
where $K_n=(V,E)$ denotes a complete graph with $n$ nodes; and $\chi(M) \in \{0,1\}^E$ denotes the characteristic vector of a matching $M$. 

The linear description of the Matching polytope is well-known and is due to Edmonds~\cite{Edmonds65}. In particular, the following linear system constitutes a minimal linear description of $P_{\match}(n)$
\begin{equation}\label{eq:matching_description}
	\begin{aligned}
		&\bm{x}\left(E[S]\right)\le (|S|-1)/2 &&\text{for all } S \subseteq V,\, |S| \text{ is odd},\, |S|\geq 3\\
		&\bm{x}(\delta(v))\le 1 &&\text{for all } v\in V \\
		& \bm{x} \geq \bm{0}\,,
	\end{aligned}
\end{equation}
where $E[S]$ denotes the set of edges with both endpoints in $S$; $\delta(v)$ denotes the set of edges with one endpoint being $v$; and $\bm{x}(F)$ denotes the sum $\sum_{e \in F}  x_e$ for $F\subseteq E$. 

The combinatorial diameter of the Matching polytope $P_{\match}(n)$ equals $\lfloor n/2 \rfloor$ for  all $n\geq 2$~\cite{BalinskiRussakoff74, Chvatal75}. Our next theorem provides the value of the circuit diameter of the Matching polytope $P_{\match}(n)$ for all possible $n$. In particular,  it shows that the circuit diameter of the Matching polytope is substantially smaller than the combinatorial diameter.

\begin{thm}\label{thm:matching}
	For the Matching polytope we have:
	$$\cd (P_{\match}(n))=\begin{cases}
						1 &n=2,3\\
						2 &n=4,5\\
						3 &n=6\\
						2 &n\ge 7\,.
						\end{cases}$$
\end{thm}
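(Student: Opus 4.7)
The plan is to establish the upper and lower bounds in each stated range of $n$, with the main effort for $n\ge 7$. The proof will exploit two sources of circuits of $P_{\match}(n)$. First, by Lemma~\ref{lem:nonnegative_circuit} applied to~\eqref{eq:matching_description}, every sign-definite circuit is a coordinate vector $\pm\bm e_e$. Second, by Proposition~\ref{prop:system_defn}, many non-sign-definite directions are circuits, including all edge directions $\chi(M')-\chi(M)$ for matchings with $M\triangle M'$ a single alternating path or even cycle, as well as additional directions that can be certified as circuits only after invoking some odd-set facet equations as tight rows of the subsystem $B'\bm y=\bm 0$.

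\textbf{Upper bound for $n\ge 7$.} For two matchings $M_1, M_2$ I would split into two cases, as described in the introduction. (a) \emph{Neither contains the other.} Analyzing the alternating-component decomposition of $M_1\triangle M_2$, I would argue that either this direction itself is a circuit (so $\chi(M_1)$ and $\chi(M_2)$ are already one circuit step apart, with maximum step size $\alpha=1$), or a common neighbour $M_3$ of $M_1$ and $M_2$ in the $1$-skeleton can be built by merging two alternating components through a vertex outside the vertex sets covered by $M_1\cup M_2$. The threshold $n\ge 7$ is precisely what guarantees enough uncovered vertices for such a merging. (b) \emph{$M_1\subsetneq M_2$.} If $|M_2\setminus M_1|=1$, the matchings are already one edge apart. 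Otherwise the direction $\chi(M_2)-\chi(M_1)$ is non-negative with at least two non-zero entries and hence, by Lemma~\ref{lem:nonnegative_circuit}, not a circuit; I would then construct an intermediate matching $M_3$ by substituting one edge of $M_1$ for a new edge on uncovered vertices, arranged so that both $M_1\triangle M_3$ and $M_2\triangle M_3$ are single alternating paths or cycles, so that case (a) applies to each pair. Again the construction needs a sufficient supply of uncovered vertices, forcing $n\ge 7$.

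\textbf{Lower bounds and small $n$.} The key lower-bound observation is that from $\chi(\emptyset)=\bm 0$, any feasible circuit $\bm c$ must satisfy $\bm c\ge \bm 0$ (any coordinate with $c_e<0$ would immediately violate $x_e\ge 0$), hence by Lemma~\ref{lem:nonnegative_circuit} the first step can only create a single-edge matching $\chi(\{f\})$. This yields $\cdist(\chi(\emptyset),\chi(M))\ge 2$ whenever $M$ has $|M|\ge 2$, settling the lower bound of $2$ for $n=4,5$ and for $n\ge 7$. For $n=6$ the lower bound of $3$ is more delicate: I would show that no circuit step from any $\chi(\{f\})$ reaches a perfect matching of $K_6$, by combining Lemma~\ref{lem:nonnegative_circuit} for sign-definite moves with a direct enumeration of the non-sign-definite circuits usable from $\chi(\{f\})$ in $K_6$, taking into account that such a step may land on a non-vertex point. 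The cases $n=2,3$ follow by direct inspection (the polytopes are a segment and a $3$-simplex). The upper bounds for $n=4,5$ follow from the same constructions as case (b) of the $n\ge 7$ argument, and the $n=6$ upper bound is immediate from the combinatorial-diameter bound $\lfloor 6/2\rfloor=3$.

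\textbf{Main obstacle.} The technical heart is case~(b) of the $n\ge 7$ upper bound: the construction of $M_3$ and the verification that the corresponding directions are indeed circuits (often via tight odd-set facet inequalities, not just degree constraints) requires a careful combinatorial-geometric argument that pinpoints the threshold $n=7$. A parallel difficulty arises in the $n=6$ lower bound, where ruling out $2$-step circuit walks from $\chi(\emptyset)$ to a perfect matching requires handling walks through non-vertex intermediate points, which in turn rests on an explicit classification of the circuits of $P_{\match}(6)$ supported on small alternating subgraphs.
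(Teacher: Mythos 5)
Your overall plan — control sign-definite circuits via Lemma~\ref{lem:nonnegative_circuit} and Proposition~\ref{prop:system_defn}, split the $n\geq 7$ upper bound into a "neither contains the other" case and a "nested" case, and derive the lower bounds by observing that the first circuit step from $\chi(\emptyset)=\bm 0$ can only increase a single coordinate — does track the paper. However, your case~(b) for $n\geq 7$ contains a real gap. You propose to find an intermediate matching $M_3$ so that both $M_1\triangle M_3$ and $M_2\triangle M_3$ are \emph{single} alternating paths or cycles and then conclude by Corollary~\ref{cor:matching1}. When $M_1\subsetneq M_2$ and $F:=M_2\setminus M_1$ has $|F|\geq 3$, this is impossible: writing $A:=M_1\triangle M_3$, one has $M_2\triangle M_3=A\triangle F$, and if $A$ is a single path or cycle (w.r.t.\ the pair $(M_1,M_3)$), the edge set $A$ can be adjacent to at most two of the pairwise disjoint edges of $F$. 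Hence $A\triangle F$ contains at least $|F|-2\geq 1$ edges of $F$ that are isolated from the rest of its support, plus a path, and so $(V,A\triangle F)$ has at least two non-trivial components. Corollary~\ref{cor:matching1} therefore does not certify the second step, and no choice of $M_3$ of the kind you describe finishes the nested case in two steps.

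What makes the paper's two-step argument work is a stronger circuit criterion you do not formulate: Lemma~\ref{lem:matching2} shows that if neither of $M_1,M_2$ is contained in the other and $(V,M_1\triangle M_2)$ has \emph{at least three} connected components (possibly trivial), then $\chi(M_1)-\chi(M_2)$ is already a circuit. This is exactly what handles the second step in the nested case: after moving from $M_1$ to $\tilde M:=M_1\cup\{e\}$ (with $e$ joining endpoints of two edges of $F$), the symmetric difference $\tilde M\triangle M_2$ is a path of length $3$ together with $|F|-2\geq 1$ isolated edges and, when $|F|=3$, at least one trivial component — which is exactly where $n\geq 7$ is used. Lemma~\ref{lem:matching2} then certifies $\chi(M_2)-\chi(\tilde M)$ as a circuit despite its multi-component support. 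Without such a lemma your proposal cannot close case~(b). A secondary remark: for the $n=6$ lower bound you do not need to classify circuits usable from $\chi(\{f\})$, nor to handle non-vertex intermediate points. The first step from $\bm 0$ necessarily stops at the vertex $\chi(\{f\})$, and it suffices to show that the single direction $\chi(M_2)-\chi(\{f\})$ is not a circuit, which the paper does by exhibiting the dominating vector $D\chi(g)$ for the unique edge $g\in M_2$ disjoint from $f$ (or by Lemma~\ref{lem:nonnegative_circuit} when $f\in M_2$).
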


\noindent
The rest of the section is devoted to proving Theorem~\ref{thm:matching}. We first recall  the characterization of adjacency of vertices of the Matching polytope. In this paper, we use symbol $\Delta$ to represent the symmetric difference operator.

\begin{lem}[\cite{BalinskiRussakoff74,Chvatal75}]\label{lem:matching1}
Consider matchings $M_1$, $M_2$ in $K_n$, $n\geq 2$.  $\chi(M_1)$ and $\chi(M_2)$ are adjacent vertices of $P_{\match}(n)$ if and only if $(V, M_1\triangle M_2)$ has a single non-trivial connected component\footnote{Trivial components are components consisting of a single node.}.
\end{lem}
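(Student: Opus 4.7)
The plan is to invoke a standard criterion for adjacency in $0/1$-polytopes: two vertices $\chi(M_1),\chi(M_2)$ of $P_{\match}(n)$ are adjacent if and only if there is no pair of matchings $(M_3,M_4)$ with $\{M_3,M_4\}\ne\{M_1,M_2\}$ satisfying $\chi(M_3)+\chi(M_4)=\chi(M_1)+\chi(M_2)$. The lemma then reduces to showing that such a ``second'' pair exists if and only if $(V,M_1\triangle M_2)$ has at least two non-trivial connected components.

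For the ``$\Rightarrow$'' direction I would argue by contraposition. Assume $(V,M_1\triangle M_2)$ has $k\ge 2$ non-trivial components $C_1,\dots,C_k$. Because $M_1,M_2$ are matchings, every vertex has degree at most two in $M_1\triangle M_2$, so each $C_i$ is an alternating path or an even alternating cycle. Define
\[
M_3 \;:=\; M_1\triangle C_1,\qquad M_4 \;:=\; M_2\triangle C_1.
\]
A short degree check shows that $M_3,M_4$ are matchings (the crucial point being that no vertex of $C_1$ can be covered by an edge of $M_1\cap M_2$, since otherwise it would have degree at least $2$ in $M_1$ or $M_2$), and a direct edge-by-edge comparison gives $\chi(M_3)+\chi(M_4)=\chi(M_1)+\chi(M_2)$. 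Finally $M_3\ne M_1$ as $C_1\ne\emptyset$, and $M_3\ne M_2$ as $M_3\triangle M_2 = C_2\cup\cdots\cup C_k\ne\emptyset$ using $k\ge 2$, so $\{M_3,M_4\}\ne\{M_1,M_2\}$ and the criterion gives non-adjacency.

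For the ``$\Leftarrow$'' direction, assume $M_1\triangle M_2 = C$ is a single non-trivial component, and let $(M_3,M_4)$ be any pair of matchings with $\chi(M_3)+\chi(M_4)=\chi(M_1)+\chi(M_2)$. Reading this equality coordinate by coordinate: every $e\in M_1\cap M_2$ forces $e\in M_3\cap M_4$; every $e\notin M_1\cup M_2$ forces $e\notin M_3\cup M_4$; and every $e\in C$ lies in exactly one of $M_3,M_4$. Hence $\{M_3\cap C,\,M_4\cap C\}$ is a partition of $E(C)$ into two matchings of $C$. The crucial structural observation is then that $C$, being an alternating path or an even cycle, admits only one such partition (up to swapping the parts): any two consecutive edges along $C$ share a vertex and therefore must lie in different parts, and propagating this constraint along $C$ forces the unique alternating $2$-coloring of $E(C)$, which is precisely $\{M_1\cap C,\,M_2\cap C\}$. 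Combined with $M_1\cap M_2\subseteq M_3\cap M_4$, this gives $\{M_3,M_4\}=\{M_1,M_2\}$, so by the criterion $\chi(M_1)$ and $\chi(M_2)$ are adjacent.

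I do not anticipate a serious obstacle: the key ingredient is the rigidity of matching-partitions of a path or even cycle, which follows immediately from the incidence structure, and the rest is bookkeeping. The only mild subtlety is justifying that every vertex of a non-trivial component of $M_1\triangle M_2$ is uncovered by $M_1\cap M_2$, which is what makes the toggling operation in the ``$\Rightarrow$'' direction produce valid matchings.
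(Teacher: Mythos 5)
The paper cites this lemma from Balinski--Russakoff and Chv\'atal rather than proving it, so there is no in-paper argument to compare against; the proposal must be judged on its own.

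Your ``$\Rightarrow$'' direction is correct and complete: $M_3=M_1\triangle C_1$ and $M_4=M_2\triangle C_1$ are matchings by the degree observation you indicate, the coordinatewise identity $\chi(M_3)+\chi(M_4)=\chi(M_1)+\chi(M_2)$ holds, and $\{M_3,M_4\}\neq\{M_1,M_2\}$ follows from $k\ge 2$. This does force non-adjacency: a linear functional uniquely maximized on the purported edge would then force $\chi(M_3),\chi(M_4)$ onto that edge and hence into $\{\chi(M_1),\chi(M_2)\}$.

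The gap is in the ``$\Leftarrow$'' direction, and it lies in the ``standard criterion'' itself. It is \emph{not} true for general $0/1$-polytopes that the absence of a second vertex pair $\bm w+\bm z=\bm u+\bm v$ implies adjacency of $\bm u,\bm v$. Consider $P=\conv\{(0,0,0),\,(1,0,0),\,(0,1,0),\,(0,0,1),\,(1,1,1)\}\subset\R^3$, a bipyramid over the triangle spanned by the three middle points. The apexes $(0,0,0)$ and $(1,1,1)$ are not adjacent (the open segment between them passes through $(1/3,1/3,1/3)$, an interior point of $P$), yet $\{(0,0,0),(1,1,1)\}$ is the only pair of vertices summing to $(1,1,1)$. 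So ruling out a second \emph{pair} does not, by itself, certify adjacency; only the ``only if'' half of the criterion you quote is a general theorem.

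The combinatorial core of your ``$\Leftarrow$'' argument --- the rigidity of alternating $2$-colorings of the single path or even cycle $C=M_1\triangle M_2$ --- is exactly the right lever, but it has to be applied to arbitrary points of the open segment expressed as arbitrary convex combinations, not just to the midpoint expressed as a pair. Concretely, suppose $\alpha\chi(M_1)+(1-\alpha)\chi(M_2)=\sum_i\lambda_i\chi(N_i)$ with $\alpha\in(0,1)$, $\lambda_i>0$, $\sum_i\lambda_i=1$, and $N_i\notin\{M_1,M_2\}$. Your coordinatewise reading still gives $N_i\setminus C=M_1\cap M_2$ for every $i$. For consecutive edges $e,f$ of $C$ (sharing a node that, by your observation, is uncovered by $M_1\cap M_2$), the left-hand side has $e$-value plus $f$-value equal to $\alpha+(1-\alpha)=1$, while the right-hand side is $\sum_i\lambda_i\bigl(\chi(N_i)_e+\chi(N_i)_f\bigr)\le 1$ with equality only if $\chi(N_i)_e+\chi(N_i)_f=1$ for every $i$; propagating along $C$ forces $N_i\cap C\in\{M_1\cap C,\,M_2\cap C\}$, hence $N_i\in\{M_1,M_2\}$, a contradiction. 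With this replacement the proposal becomes a sound proof.
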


The above lemma has a straightforward corollary.

\begin{cor}\label{cor:matching1}
Consider matchings $M_1$, $M_2$ in $K_n$, $n\geq 2$. If $(V, M_1\triangle M_2)$ has a single non-trivial connected component, then $\bm{c}:=\chi(M_1)-\chi(M_2)$ is a circuit of $P_{\match}(n)$.
\end{cor}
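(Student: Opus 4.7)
The plan is to reduce the corollary to Proposition~\ref{prop:system_defn} via Lemma~\ref{lem:matching1}. First I would apply Lemma~\ref{lem:matching1}: the hypothesis on $(V, M_1 \triangle M_2)$ yields that $\chi(M_1)$ and $\chi(M_2)$ are adjacent vertices of $P_{\match}(n)$, so together they span a $1$-dimensional face $F$ of $P_{\match}(n)$.

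Next, observe that the system (\ref{eq:matching_description}) describing $P_{\match}(n)$ contains no equations, so in the notation of Proposition~\ref{prop:system_defn} the matrix $A$ is empty. Let $B'$ consist of those rows of the defining matrix $B$ whose inequalities are tight at every point of $F$, and write $B'\bm{x} \le \bm{d}'$ for the corresponding subsystem. Because (\ref{eq:matching_description}) is a minimal description and $P_{\match}(n)$ is full-dimensional (for instance, the empty matching~$\bm{0}$ together with the $|E|$ single-edge matchings $\chi(\{e\})$ form an affinely independent collection), the affine hull of $F$ equals $\{\bm{x}\in\R^E : B'\bm{x}=\bm{d}'\}$. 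Since $F$ has dimension~$1$, this affine subspace has dimension~$1$, and hence $\dim \Ker(B') = 1$.

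Finally, $\bm{c} := \chi(M_1) - \chi(M_2)$ is a non-zero vector parallel to the affine hull of $F$, so $\bm{c} \in \Ker(B')$ and $\bm{c}$ is the unique (up to scaling) non-zero solution of $B'\bm{y} = \bm{0}$. Proposition~\ref{prop:system_defn} then directly yields that $\bm{c}$ is a circuit of $P_{\match}(n)$.

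The only subtlety I anticipate is the identification of the affine hull of the edge $F$ with $\{\bm{x} : B'\bm{x} = \bm{d}'\}$; this is a standard fact for full-dimensional polytopes with minimal linear descriptions, but it is worth stating explicitly, since circuit-theoretic arguments are sensitive to the chosen description of the polytope.
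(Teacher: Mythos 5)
Your proposal is correct and takes essentially the same approach the paper has in mind: the paper labels the corollary ``straightforward'' because edge directions of a polytope are always circuits, which you verify explicitly by combining the adjacency characterization of Lemma~\ref{lem:matching1} with Proposition~\ref{prop:system_defn}, using full-dimensionality of $P_{\match}(n)$ and the minimality of~\eqref{eq:matching_description} to identify the affine hull of the edge with the kernel of the tight subsystem.
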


The next lemma shows that the set of circuits of the Matching polytope is much richer than the set of its edge directions. In particular, it shows that for two matchings to define a circuit their symmetric difference does not necessarily have to consist of one non-trivial component only. The circuit directions provided by this lemma will be extensively used to construct short circuit walks in the proof of Theorem~\ref{thm:matching}.  

\begin{lem}\label{lem:matching2}
Consider matchings $M_1$, $M_2$ in $K_n$, such that $M_1\not\subseteq M_2$ and $M_2\not\subseteq M_1$. Then either $(V,M_1 \Delta M_2)$ contains at most two (possibly trivial) connected components, or $\bm{c}:=\chi(M_1)-\chi(M_2)$ is a circuit of $P_{\match}(n)$.
\end{lem}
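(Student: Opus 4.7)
The plan is to show the only non-trivial implication: if $(V,M_1 \Delta M_2)$ has at least three (possibly trivial) connected components, then $\bm{c} := \chi(M_1) - \chi(M_2)$ is a circuit of $P_{\match}(n)$. I would apply Proposition~\ref{prop:system_defn}, exhibiting a subset of the facet-defining inequalities in~\eqref{eq:matching_description}, each tight at $\bm{c}$, whose corresponding equalities admit $\bm{c}$ as their unique non-zero solution up to scaling.

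I would begin with the routine reductions. For every edge $e \notin M_1 \Delta M_2$ one has $\bm{c}_e = 0$, so the inequality $x_e \geq 0$ is tight at $\bm{c}$; including these forces any solution $\bm{y}$ to vanish outside $M_1 \Delta M_2$. Since $M_1$ and $M_2$ are matchings, every non-trivial connected component $C$ of $(V,M_1 \Delta M_2)$ is either an alternating even cycle or an alternating path. For every $v \in V(C)$ incident in $C$ to both an $M_1$-edge and an $M_2$-edge (i.e., all vertices of a cycle, and all internal vertices of a path), $\bm{c}(\delta(v)) = 1+(-1) = 0$, so the degree inequality $\bm{x}(\delta(v)) \leq 1$ is tight at $\bm{c}$. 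Including it yields $y_{e_1}+y_{e_2}=0$ where $e_1,e_2$ are the two $C$-edges at $v$; propagating along $C$ gives $\bm{y}|_{E(C)} = \lambda_C\, \bm{c}|_{E(C)}$ for some scalar $\lambda_C \in \R$, one per non-trivial component.

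The heart of the proof is to use odd-set inequalities to force $\lambda_C = \lambda_{C'}$ for every pair of non-trivial components $C,C'$. For each such pair I would construct a subset $S \subseteq V$ with $|S|$ odd and $|S| \geq 3$ so that (i) $\bm{c}(E[S])=0$, which makes $\bm{x}(E[S]) \leq (|S|-1)/2$ tight at $\bm{c}$, and (ii) the equation $\sum_{e \in E[S]} y_e = 0$, after using the relations above, collapses to $\alpha(\lambda_C - \lambda_{C'}) = 0$ with $\alpha \neq 0$. The construction splits into cases based on the ``charge'' $\bm{c}(E(C)) \in \{-1,0,+1\}$: if $C,C'$ are unbalanced paths whose endpoint-edges are in opposite matchings (so $\bm{c}(E(C)) = -\bm{c}(E(C')) = \pm 1$), a natural choice is $S = V(C) \cup V(C') \cup T$ for a small subset $T$ of outside vertices; if $C$ or $C'$ is a cycle or a balanced path (where $\bm{c}(E(C)) = 0$), $S$ must instead contain only a proper arc of the component so that its partial contribution to $\bm{c}(E[S])$ becomes $\pm 1$. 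If a pair cannot be linked directly because their charges agree in sign, one links them through an intermediate component of opposite charge.

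The main obstacle is controlling the parity of $|S|$ while preserving $\bm{c}(E[S])=0$. This is precisely where the three-component hypothesis enters: there is always some vertex $v_0$ lying in a component other than $C$ and $C'$, and every edge joining $v_0$ to $V(C) \cup V(C')$ lies outside $M_1 \Delta M_2$ (since it would otherwise be a path in the symmetric-difference graph connecting distinct components). Hence toggling $v_0 \in S$ flips the parity of $|S|$ without changing $\bm{c}(E[S])$, which provides exactly the flexibility needed to make $S$ an odd set of size at least $3$. Iterating this pairwise argument forces all $\lambda_C$ to coincide; together with $y_e = 0$ outside $M_1 \Delta M_2$, this yields $\bm{y} = \lambda \bm{c}$ for some $\lambda \in \R$, which by Proposition~\ref{prop:system_defn} verifies that $\bm{c}$ is a circuit of $P_{\match}(n)$.
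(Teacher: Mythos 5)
Your plan follows the same three-step strategy as the paper's proof: use the tight nonnegativity rows to force $\bm{y}_e = 0$ for $e \notin M_1 \triangle M_2$, use the tight degree rows at internal vertices of each component $C$ to pin $\bm{y}$ to $\lambda_C\,\bm{c}$ on $E(C)$, and then use odd-set rows with $\bm{c}(E[S]) = 0$ to force the $\lambda_C$'s to coincide. The gap is in the third step. Your odd sets are built from whole components when $\bm{c}(E(C)) = -\bm{c}(E(C')) = \pm 1$, from an arc when one component is balanced (charge $0$), and otherwise by routing through an intermediate component of opposite charge. But no opposite-charge component need exist: take $M_1 = \{e_1,e_2,e_3,e_4\}$ pairwise disjoint and $M_2 = \{f\}$ where $f$ joins an endpoint of $e_1$ to an endpoint of $e_2$. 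The non-trivial components of $(V, M_1 \triangle M_2)$ are then the $3$-edge path through $e_1$, $f$, $e_2$ and the singletons $\{e_3\}$, $\{e_4\}$, all of charge $+1$, so your three cases do not apply to the pair $(\{e_3\},\{e_4\})$, and a singleton component has no proper arc of opposite sign to truncate to.

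The paper avoids component charges entirely by working at the granularity of single edges. Since $M_1\not\subseteq M_2$ and $M_2\not\subseteq M_1$, there is one edge $e'$ with $\bm{c}_{e'}=+1$ and one edge $e''$ with $\bm{c}_{e''}=-1$ somewhere in $M_1 \triangle M_2$. For any further edge $e'''$ with $\bm{c}_{e'''}\neq 0$ in a new component, the odd set $S$ consists of the two endpoints of $e'''$, the two endpoints of whichever of $e',e''$ carries the opposite sign, and a single extra vertex $z$ from yet another component, which exists exactly by the three-component hypothesis. Then $|S|=5$ is automatically odd, and $\bm{c}(E[S])=0$ because the two selected edges cancel and every other edge inside $S$ straddles two components. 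If you replace your whole-component sets and arcs by these fixed $5$-vertex sets, your argument closes the gap and essentially becomes the paper's proof.
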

\begin{proof}
 Suppose that $(V, M_1 \Delta M_2)$ contains at least three connected components.
Let us assume for the sake of contradiction that $\bm{c}=\chi(M_1)-\chi(M_2)$ is not a circuit. Since $\bm{c}$ is not a circuit there exists a non-zero vector $\bm{y}$
such that $\supp(D\bm{y}) \subset \supp(D\bm{c})$, where $D$ denotes the constraint matrix of the minimal linear description~\eqref{eq:matching_description} for the Matching polytope.

Since the inequalities $\bm x_e \geq 0$, $e \in E$ are present in the minimal linear description~\eqref{eq:matching_description} and $\supp(D\bm{y}) \subset \supp(D\bm{c})$, we have that  $\bm y_e =0$ for every edge $e$ such that $\bm c_e=0$. Let $e'=\{v_1,v_2\}$ be an edge so that $\bm y_{e'} \neq 0$. Let $C'$ be the connected component of $(V, M_1 \Delta M_2)$ containing the edge $e'$. Without loss of generality, possibly using rescaling of the vector $\bm y$, we can assume $\bm y_{e'}=1$.  By exchanging the roles of $M_1$ with $M_2$ if necessary, we can assume that $\bm c_{e'}=1$.
Note that $C'$ is either a path or a cycle. Moreover, for all nodes $v$ with degree two in $C'$ we have $\bm{c}(\delta(v))=0$. Since $\supp(D\bm{y}) \subset \supp(D\bm{c})$, we have that $\bm{c}(\delta(v))=0$ implies $\bm{y}(\delta(v))=0$, leading to $\bm y_{e} = \bm c_{e}$ for all $e \in C'$.

Now let $e''=\{u_1,u_2\}$ be an edge such that $\bm c_{e''} = -1$. Note that such an edge $e''$ exists since $M_1\not\subseteq M_2$ and $M_2\not\subseteq M_1$. Let $C''$ be the connected component of $(V,M_1 \Delta M_2)$ containing the edge $e''$.  Let us prove that $\bm y_{e} = \bm c_{e}$ for all $e \in C''$. If $C'$ and $C''$ are the same connected component, then this readily follows from the previous paragraph. If not, let $z$ be a node that belongs to a (possibly trivial) connected component $\tilde C$ of $(V, M_1 \Delta M_2)$ different from $C'$ and $C''$. Let $S:=\{z,u_1,u_2,v_1,v_2\}$ and
note that $\bm{c}(E(S))=0$. Since $\supp(D\bm{y}) \subset \supp(D\bm{c})$, we get $\bm{y}(E(S))=0$, implying $\bm y_{e''} = \bm c_{e''} =-1$. As in the previous paragraph, $C''$ is either a path or a cycle, and for all $v \in V$ with degree two in $C''$ we have $\bm{c}(\delta(v))=0$. Since $\supp(D\bm{y}) \subset \supp(D\bm{c})$, necessarily $\bm{y}(\delta(v))=0$, implying $\bm y_{e} = \bm c_{e}$ for all $e \in C''$.

Now let $e'''=\{w_1,w_2\}$ be an edge not in $C'$ and not in $C''$, but in the connected component $C'''$ of  $(V,M_1 \Delta M_2)$, such that  $\bm c_{e'''} \neq 0$. If $\bm c_{e'''} =1$ (resp. $\bm c_{e'''} =-1$), then we take
the set $S:=\{u_1,u_2,z,w_1,w_2\}$, where $z$ is not in $C''$ and not in $C'''$ (resp. $S:=\{v_1,v_2,z,w_1,w_2\}$, where $z$ is not in $C'$ and not in $C'''$). Since $\bm{c}(E(S))=0$ and $\supp(D\bm{y}) \subset \supp(D\bm{c})$, we get that $\bm{y}(E(S))=0$ holds. On the other side, $\bm{y}(E(S))=0$ implies $\bm y_{e'''} = \bm c_{e'''} =1$ (resp. $\bm y_{e'''} =  \bm c_{e'''} =-1$).
Repeating this argument for all edges in the support of $\bm{c}$ we show that $\bm{y}=\bm{c}$, a contradiction.
\end{proof}

With the above lemma at hand, we are ready to prove Theorem~\ref{thm:matching}.

\begin{proof}\emph{(Proof of Theorem \ref{thm:matching})}
The cases $n=2$ and $n=3$ are trivial. Indeed, $P_{\match}(2)$ and $P_{\match}(3)$ are simplices, and thus every two vertices of $P_{\match}(2)$ and $P_{\match}(3)$ form an edge.

For $n\ge 4$, we consider an empty matching $M_1$ and a matching $M_2$ consisting of two edges to establish
$$
\cd (P_{\match}(n))\ge 2\,.
$$
Indeed, $\cdist(\chi(M_1),\chi(M_2))\ge 2$, because $\bm{c}:=\chi(M_2)-\chi(M_1)$ satisfies $\bm{c}\ge 0$ and has two non-zero entries, and thus $\bm{c}$ is not a circuit  by Lemma~\ref{lem:nonnegative_circuit}. Hence, the vertex $\chi(M_1)$ is not one circuit step away from the vertex $\chi(M_2)$, implying $\cd (P_{\match}(n))\ge 2$.

For $n=6$, the lower bound on the circuit diameter can be improved to the one below
$$
\cd (P_{\match}(6))\ge 3\,.
$$
Consider an empty matching $M_1$ and a perfect matching $M_2$. For a walk from $\chi(M_1)$ to $\chi(M_2)$ the first circuit step at the vertex $\chi(M_1)=\bm{0}$ corresponds to a circuit $\bm{c}$ with $\bm{c}\ge \bm{0}$. Thus, by Lemma~\ref{lem:nonnegative_circuit} the first circuit step corresponds to $\bm{c}$ with exactly one non-zero coordinate.  After the first circuit step we get a vertex $\chi(M')$, where $M'$ is a matching consisting of a single edge $e$. Let us prove that $\bm{c}':=\chi(M_2)-\chi(M')$ is not a circuit and thus $\cdist(\chi(M_1),\chi(M_2))\ge 3$.  If $e\in M_2$, the vector $\bm{c}'$ is not a circuit by Lemma~\ref{lem:nonnegative_circuit}. If $e\not\in M_2$,  let $g$ be the edge in $M_2$ having no common vertex with the edge~$e$. Then the vector $\bm{c}'$ is not a circuit, since the vector $D \chi(g)$ has a smaller support than $D \bm{c}'$, where $D$ is the constraint matrix of the linear description~\eqref{eq:matching_description} for $P_{\match}(6)$. Hence, we showed that any circuit step from $\chi(M_1)$ will always end in a vertex $\chi(M')$, which is at least two circuit steps from $\chi(M_2)$, implying $\cd (P_{\match}(6))\ge 3$.

Now let us prove the corresponding upper bounds for $\cd (P_{\match}(n))$, $n\ge 4$. For $n=4$, $n=5$ and two matchings $M_1$ and $M_2$, $(V,M_1\triangle M_2)$ has at most two non-trivial connected components. This fact together with Corollary~\ref{cor:matching1} implies $\cdist(M_1,M_2) \leq 2$.
For $n=6$ and two matchings $M_1$ and $M_2$, $(V,M_1\triangle M_2)$ has at most three non-trivial connected components. Again, this fact together with Corollary~\ref{cor:matching1} implies $\cdist(M_1,M_2) \leq 3$.

For $n\ge 7$, consider the graph $(V, M_1 \Delta M_2)$ given by the symmetric difference of two matchings $M_1$ and $M_2$. If the symmetric difference contains one $e \in M_1$ and one $e' \in M_2$, then by Lemma~\ref{lem:matching2} 
and  Corollary~\ref{cor:matching1}, $\cdist(M_1,M_2)$  is at most 2. 
Otherwise, the subset $F$ of edges of $M_1 \Delta M_2 $ satisfies 
either $F \subseteq M_1$ or $F \subseteq M_2$. 
If $|F| =2$, the results again follows by Corollary~\ref{cor:matching1}. So assume $|F|\geq 3$. 
First, suppose $F \subseteq M_2$. Let $e$ be any edge connecting two endpoints of two distinct edges in $F$,
and let $\tilde M := M_1 \cup \{e\}$. Clearly, $\cdist(M_1,\tilde M) =1$.
 Now we claim that $\bm{c}:=\chi(M_2)-\chi(\tilde M)$ is a circuit. Indeed, $(V, \tilde M \Delta M_2)$ has at least three connected component:  one path of length 3 and either at least two other edges, or one other edge plus at least one trivial connected component consisting of a single node (since $n\geq 7$).
 In both cases, Lemma~\ref{lem:matching2} implies that $\bm{c}:=\chi(M_2)-\chi(\tilde M)$ is a circuit, leading to the result.
Finally, suppose $F \subseteq M_1$. Similarly to the previous case, we set $\tilde M := M_2 \cup \{e\}$.
Then, by Lemma~\ref{lem:matching2} we get that $\chi(\tilde M)-\chi(M_1)$ is a circuit, and 
by  Corollary~\ref{cor:matching1} we get that  $\chi(M_2) - \chi(\tilde M)$ is a circuit, leading to the result.
\end{proof}

\subsection{Perfect Matching Polytope}

Let us define the Perfect Matching polytope
$$
	P_{\permatch}(n):=\conv \left \{ \chi(M)\,:\, M \text{ is a perfect matching in } K_n\right \}\,,
$$
where $n\ge 4$ and $n$ is even. In~\cite{Edmonds65}, Edmonds showed that the following linear system constitutes a minimal linear description of $P_{\permatch}(n)$
\begin{equation}\label{eq:perfect_matching_description}
	\begin{aligned}
		&\bm{x}\left(\delta(S)\right)\ge 1 &&\text{for all } S \subset V,\, |S| \text{ is odd}\,,|S|\geq 3\\
		&\bm{x}(\delta(v))= 1 &&\text{for all } v\in V\, \\
		&\bm{x} \geq 0\,. &&
	\end{aligned}
\end{equation}

\begin{thm}\label{thm:perfect_matching}
	For the perfect matching polytope we have:
	$$\cd (P_{\permatch}(n))=\begin{cases}
						1 &n=4,6\\
						2 &n= 8\\
						1 &n\ge 10\,.
						\end{cases}$$
\end{thm}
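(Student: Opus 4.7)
The cases $n \in \{4,6\}$ are immediate. For $n=4$ the polytope $P_{\permatch}(4)$ is the two-dimensional simplex on the three perfect matchings of $K_4$; for $n=6$, any two distinct perfect matchings of $K_6$ have symmetric difference a single even cycle (of length $4$ or $6$), so they are adjacent vertices of $P_{\permatch}(6)$ by the adjacency criterion inherited from $P_{\match}(6)$ (since $P_{\permatch}(n)$ is a face of $P_{\match}(n)$ and Lemma~\ref{lem:matching1} applies). Hence the combinatorial diameter, and thus the circuit diameter, equals $1$.

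For the lower bound $\cd(P_{\permatch}(8))\ge 2$, I would take perfect matchings $M_1, M_2$ of $K_8$ whose symmetric difference consists of two vertex-disjoint $4$-cycles $C_1, C_2$ covering all $8$ vertices, and show that $\bm{c}:=\chi(M_1)-\chi(M_2)$ is not a circuit. Consider the vector $\bm{y}$ equal to $\bm{c}$ on $C_1$ and $\bm{0}$ on $C_2$: then $A\bm{y}=\bm{0}$ and its nonnegativity support is strictly contained in $\supp(\bm{c})$. Setting $s_i(S):=|M_1\cap C_i\cap\delta(S)|-|M_2\cap C_i\cap\delta(S)|$, a direct check on a $4$-cycle shows $s_i(S)\ne 0$ only when $|S\cap V(C_i)|=2$ with the two chosen vertices adjacent along $C_i$. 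Since $V=V(C_1)\cup V(C_2)$ and $|S|$ is odd, $|S\cap V(C_1)|$ and $|S\cap V(C_2)|$ have opposite parities, so at most one of $s_1(S),s_2(S)$ is nonzero, and $\bm{c}(\delta(S))=s_1(S)+s_2(S)=0$ forces both to vanish. Hence $\bm{y}(\delta(S))=s_1(S)=0$, giving $\supp(B\bm{y})\subsetneq\supp(B\bm{c})$ and contradicting circuithood of $\bm{c}$. The upper bound $\cd(P_{\permatch}(8))\le 2$ is inherited from the known combinatorial diameter of $2$ for $n\ge 8$.

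For the main case $n\ge 10$, it suffices to show that $\bm{c}:=\chi(M_1)-\chi(M_2)$ is a circuit for every pair of distinct perfect matchings, because the step of length $1$ in direction $\bm{c}$ from $\chi(M_1)$ is automatically maximal (any larger step would drive a coordinate of $\chi(M_1)+\alpha\bm{c}$ below $0$). If $M_1\triangle M_2$ is a single cycle, then $\chi(M_1),\chi(M_2)$ are adjacent vertices of $P_{\permatch}(n)$ (analog of Lemma~\ref{lem:matching1}), and $\bm{c}$ is a circuit. Otherwise $M_1\triangle M_2$ decomposes into $k\ge 2$ cycles $C_1,\ldots,C_k$, and for any $\bm{y}\in\Ker(A)$ with $\supp(B\bm{y})\subseteq\supp(B\bm{c})$, nonnegativity forces $\supp(\bm{y})\subseteq M_1\triangle M_2$, and the degree constraints combined with the alternating $M_1/M_2$ structure along each cycle force $\bm{y}|_{M_1\cap C_i}\equiv\lambda_i$ and $\bm{y}|_{M_2\cap C_i}\equiv -\lambda_i$ for some scalars $\lambda_i$. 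It remains to force $\lambda_1=\cdots=\lambda_k$.

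For each pair $(i,j)$ I plan to construct an odd set $S\subseteq V$ with $|S|\ge 3$, $\bm{c}(\delta(S))=0$, and $\bm{y}(\delta(S))=2(\lambda_j-\lambda_i)$, which compels $\lambda_i=\lambda_j$. The building block is: in a cycle $C=u_1u_2\cdots u_{2\ell}$ with $M_1$-edges at odd positions, the adjacent pair $\{u_1,u_2\}$ gives signed cut $-2$, the adjacent pair $\{u_2,u_3\}$ gives $+2$, and any single vertex of $C$ gives $0$. I take two adjacent vertices of $C_i$ sharing an $M_1$-edge (yielding $s_i=-2$) and two adjacent vertices of $C_j$ sharing an $M_2$-edge (yielding $s_j=+2$), then add a single extra vertex to make $|S|$ odd without altering any $s_m$. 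This splits into three subcases: (a) if some vertex is fixed (i.e.\ matched identically in $M_1$ and $M_2$), include it -- it contributes $0$ to both $\bm{c}(\delta(S))$ and $\bm{y}(\delta(S))$; (b) if no vertex is fixed but $k\ge 3$, include a single vertex of some $C_m$ with $m\ne i,j$, which gives $s_m=0$; (c) if $k=2$ and $V=V(C_1)\cup V(C_2)$, then $n=2(\ell_1+\ell_2)\ge 10$ forces one cycle, say $C_j$, to have length $\ge 6$, and I replace the adjacent pair in $C_j$ with the triple $\{u_2,u_3,u_5\}$, which still yields $s_j=+2$ while making $|S|=5$. The main subtlety is the parity bookkeeping in case (c); once this is handled, the circuit property of $\bm{c}$ follows in all cases, completing the theorem.
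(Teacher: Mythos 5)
Your proposal is correct and follows essentially the same strategy as the paper's proof: reduce the $n\ge 10$ case to showing that $\chi(M_1)-\chi(M_2)$ is always a circuit, decompose the symmetric difference into alternating cycles, deduce from the degree equalities that any competing $\bm{y}$ is a scalar multiple of $\bm{c}$ on each cycle, and then use odd-cut constraints with $\bm{c}(\delta(S))=0$ to force all the per-cycle scalars to coincide. The one stylistic difference is that where you split the construction of the odd set $S$ into three subcases (fixed vertex, third cycle, or a triple in a cycle of length at least $6$), the paper's Lemma~\ref{lem:perfect_matching2} handles all cases uniformly by taking $S=\{z,u_1,u_2,v_1,v_2\}$ with $z$ any node non-adjacent in $M_1\triangle M_2$ to the four chosen endpoints, which exists whenever $n>8$ because degrees in $M_1\triangle M_2$ are at most~$2$.
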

The rest of this section is devoted to prove Theorem~\ref{thm:perfect_matching}. First, let us recall the characterization of adjacency of the vertices of the Perfect Matching polytope.

\begin{lem}[\cite{BalinskiRussakoff74,Chvatal75}]\label{lem:perfect_matching1}
Consider perfect matchings $M_1$, $M_2$ in $K_n$, $n\geq 2$.  $\chi(M_1)$ and $\chi(M_2)$ are adjacent vertices of $P_{\permatch}(n)$ if and only if $(V, M_1\triangle M_2)$ has a single non-trivial connected component.
\end{lem}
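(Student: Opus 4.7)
The plan is to use the standard adjacency criterion for $0/1$ polytopes: two vertices $\chi(M_1),\chi(M_2)$ of $P_{\permatch}(n)$ are non-adjacent if and only if there exist perfect matchings $N_1,N_2$ with $\{N_1,N_2\}\neq\{M_1,M_2\}$ and $\chi(N_1)+\chi(N_2)=\chi(M_1)+\chi(M_2)$. (This holds because the midpoint of the segment joining two vertices lies in the relative interior of a face, and any other vertex decomposition of that midpoint into two points produces two more vertices of that face; for $0/1$ polytopes one can limit attention to representations using exactly two matchings.) The key structural observation I would use throughout is that $M_1\Delta M_2$, being a union of two matchings, is a vertex-disjoint union of paths and cycles in which edges strictly alternate between $M_1$ and $M_2$; and because $M_1,M_2$ are \emph{perfect}, every node has degree $0$ or $2$ in $M_1\Delta M_2$, so the non-trivial components are in fact all even cycles alternating between $M_1$ and $M_2$.

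First I would prove the ``only if'' direction. Suppose $(V,M_1\Delta M_2)$ contains at least two non-trivial components $C_1$ and $C_2$ (necessarily even alternating cycles by the remark above). Define
\[
    N_1 := (M_1\setminus E(C_1))\cup (M_2\cap E(C_1))\,,\qquad N_2 := (M_2\setminus E(C_1))\cup (M_1\cap E(C_1))\,.
\]
Since $C_1$ is an even $M_1/M_2$-alternating cycle, both $N_1$ and $N_2$ are perfect matchings of $K_n$, and clearly $\chi(N_1)+\chi(N_2)=\chi(M_1)+\chi(M_2)$. Moreover $N_1\neq M_1$ (they differ on $E(C_1)$) and $N_1\neq M_2$ (they still differ on $E(C_2)$, since outside $C_1$ the matching $N_1$ agrees with $M_1$). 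Thus $\{N_1,N_2\}\neq\{M_1,M_2\}$, and by the $0/1$-adjacency criterion $\chi(M_1)$ and $\chi(M_2)$ are not adjacent.

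For the ``if'' direction, suppose $(V,M_1\Delta M_2)$ has a single non-trivial component $C$, necessarily an even alternating cycle. Let $N_1,N_2$ be perfect matchings with $\chi(N_1)+\chi(N_2)=\chi(M_1)+\chi(M_2)$. Viewed entrywise, on every edge $e\in M_1\cap M_2$ we must have $\chi(N_1)_e+\chi(N_2)_e=2$, forcing $e\in N_1\cap N_2$; and on every edge $e\notin M_1\cup M_2$ we must have $\chi(N_1)_e+\chi(N_2)_e=0$, forcing $e\notin N_1\cup N_2$. Consequently $N_1$ and $N_2$ coincide with $M_1$ (and with $M_2$) on $E\setminus E(C)$, and on $E(C)$ the edges of $N_1\Delta N_2$ must be exactly $E(C)$, with $N_1,N_2$ giving a $2$-coloring of $C$ in which each color class is a perfect matching of $V(C)$. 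But on an even cycle there are exactly two such colorings, and they correspond precisely to $M_1$ and $M_2$ restricted to $C$. Hence $\{N_1,N_2\}=\{M_1,M_2\}$, and the adjacency criterion yields adjacency.

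The only delicate point is the $0/1$-adjacency criterion itself; once that is invoked, the argument reduces to the combinatorics of alternating decompositions of an even cycle, which is straightforward. The ``if'' direction is slightly more subtle because one must argue that $N_1,N_2$ can only differ from $M_1,M_2$ on $E(C)$, but this is forced entrywise from the equation $\chi(N_1)+\chi(N_2)=\chi(M_1)+\chi(M_2)\in\{0,1,2\}^E$, so no genuine obstacle arises.
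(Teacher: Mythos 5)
The paper cites this lemma from Balinski--Russakoff and Chv\'atal without giving a proof, so there is nothing in-paper to compare against; your proposal is a reasonable reconstruction of the classical argument, and its combinatorial core is sound: since $M_1,M_2$ are perfect, every non-trivial component of $(V,M_1\triangle M_2)$ is an even $M_1/M_2$-alternating cycle; switching one such cycle produces perfect matchings $N_1,N_2$ with $\chi(N_1)+\chi(N_2)=\chi(M_1)+\chi(M_2)$ and $\{N_1,N_2\}\neq\{M_1,M_2\}$; and a single cycle admits only its two alternating perfect matchings.

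The gap is in the adjacency criterion you invoke. The direction you need for ``single component $\Rightarrow$ adjacent'' is: \emph{non-adjacency implies the existence of two vertices, other than the given pair, whose characteristic vectors have the same sum}. That is not a valid statement about general $0/1$ polytopes. For instance, in $\conv\{(0,0,0),(1,1,0),(1,0,1),(0,1,1),(1,1,1)\}$ the vertices $(0,0,0)$ and $(1,1,1)$ are non-adjacent, since their midpoint equals $\tfrac14\bigl((1,1,0)+(1,0,1)+(0,1,1)+(0,0,0)\bigr)$, yet no pair of vertices other than $\{(0,0,0),(1,1,1)\}$ sums to $(1,1,1)$. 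So ``for $0/1$ polytopes one can limit attention to representations using exactly two matchings'' is exactly the assertion that needs a matching-specific proof, and as written your ``if'' direction rests on it. The repair is immediate and uses nothing you have not already done: take an arbitrary convex combination $\tfrac12(\chi(M_1)+\chi(M_2))=\sum_i\lambda_i\chi(N_i)$ of vertices. For $e\in M_1\cap M_2$ the left-hand side is $1$, forcing $e\in N_i$ for every $i$; for $e\notin M_1\cup M_2$ it is $0$, forcing $e\notin N_i$ for every $i$; hence each $N_i$ agrees with $M_1$ and $M_2$ outside the single alternating cycle $C$ and restricts on $C$ to one of its two perfect matchings, so $N_i\in\{M_1,M_2\}$ for all $i$. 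Since every representation of the midpoint uses only $\chi(M_1)$ and $\chi(M_2)$, the minimal face containing the midpoint is the segment between them, i.e.\ an edge. Your ``only if'' direction is unaffected, as it uses only the easy implication that an alternative two-vertex representation of the midpoint precludes adjacency.
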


The above lemma has a straightforward corollary.

\begin{cor}\label{cor:perfect_matching1}
Consider perfect matchings $M_1$, $M_2$ in $K_n$, $n\geq 2$. If $(V, M_1\triangle M_2)$ has a single non-trivial connected component, then $\bm{c}:=\chi(M_1)-\chi(M_2)$ is a circuit of $P_{\permatch}(n)$.
\end{cor}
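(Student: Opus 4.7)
The plan is to derive this corollary directly from Lemma~\ref{lem:perfect_matching1} together with the general principle, formalized in Proposition~\ref{prop:system_defn} and mentioned earlier in the introduction, that every edge direction of a polytope is a circuit. Since Lemma~\ref{lem:perfect_matching1} already gives the ``only if/if'' characterization of adjacency, the hypothesis immediately yields that $\chi(M_1)$ and $\chi(M_2)$ are adjacent vertices of $P_{\permatch}(n)$, so the segment $[\chi(M_1),\chi(M_2)]$ is a one-dimensional face of $P_{\permatch}(n)$. It then remains to translate ``edge of $P_{\permatch}(n)$'' into ``circuit of $P_{\permatch}(n)$.''

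To carry out this translation I would invoke Proposition~\ref{prop:system_defn}. Let $A, B$ be the constraint matrices of the minimal description~\eqref{eq:perfect_matching_description}. Being a one-dimensional face, the edge $[\chi(M_1),\chi(M_2)]$ is the intersection of $P_{\permatch}(n)$ with some collection of facet-defining hyperplanes; call $B'$ the submatrix of $B$ corresponding to the inequalities that are tight on the entire edge. Both $\chi(M_1)$ and $\chi(M_2)$ satisfy the equalities $A\bm{x}=\bm{b}$ and $B'\bm{x}=\bm{d}'$ (where $\bm{d}'$ is the corresponding part of $\bm{d}$), so $\bm{c} := \chi(M_1)-\chi(M_2)$ satisfies $A\bm{c}=\bm{0}$ and $B'\bm{c}=\bm{0}$. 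Since the face has dimension exactly one, the solution space of $\{A\bm{y}=\bm{0},\, B'\bm{y}=\bm{0}\}$ is one-dimensional, so $\bm{c}$ is the unique non-zero solution up to scaling. By Proposition~\ref{prop:system_defn}, $\bm{c}$ is a circuit of $P_{\permatch}(n)$.

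No real obstacle is anticipated; the corollary is essentially a restatement of the general fact ``edges are circuits'' specialized via the adjacency characterization in Lemma~\ref{lem:perfect_matching1}. If desired, one could alternatively verify the support-minimality condition of the original definition directly, but routing the argument through Proposition~\ref{prop:system_defn} keeps the proof a one-line application of the already-established machinery.
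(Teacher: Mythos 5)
Your proposal is correct and matches the paper's intended argument: the paper states this as a ``straightforward corollary'' of Lemma~\ref{lem:perfect_matching1}, relying implicitly on the fact (noted in the introduction) that every edge direction is a circuit. Your explicit verification of that fact via Proposition~\ref{prop:system_defn} --- taking $B'$ to be the constraints tight on the one-dimensional face, whose homogeneous solution space is then one-dimensional --- is exactly the right way to make the omitted step rigorous.
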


The next lemma shows that every two different matchings define a circuit  whenever $n\geq 10$. The circuit directions provided by this lemma will be extensively used to construct short circuit walks in the proof of Theorem~\ref{thm:perfect_matching}. The proof of Lemma~\ref{lem:perfect_matching2} uses ideas similar to the ones in the proof of Lemma~\ref{lem:matching2}. 

\begin{lem}\label{lem:perfect_matching2}
Consider two different perfect matchings $M_1$, $M_2$ in $K_n$, $n\geq 10$. Then $\bm{c}:=\chi(M_1)-\chi(M_2)$ is a circuit of $P_{\permatch}(n)$.
\end{lem}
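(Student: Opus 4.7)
The plan is to mirror the argument of Lemma~\ref{lem:matching2}. I would assume for contradiction that $\bm{c} := \chi(M_1) - \chi(M_2)$ is not a circuit of $P_{\permatch}(n)$, so that there exists a non-zero vector $\bm{y}$ with $\supp(D\bm{y}) \subsetneq \supp(D\bm{c})$, where $D$ denotes the constraint matrix of the minimal description~\eqref{eq:perfect_matching_description}. The goal is to prove that such a $\bm{y}$ must in fact be a scalar multiple of $\bm{c}$, contradicting the strict support containment.

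The first step is to pin down the structure of $\bm{y}$. The non-negativity inequalities force $\bm{y}_e = 0$ whenever $\bm{c}_e = 0$, so $\supp(\bm{y}) \subseteq M_1 \Delta M_2$. The degree equalities $\bm{x}(\delta(v)) = 1$ lie in the row space of $A$, so $\bm{y}(\delta(v)) = 0$ for every $v \in V$. Since $(V, M_1 \Delta M_2)$ decomposes into isolated vertices and even cycles, and $\bm{y}$ must alternate in sign along each cycle (the two non-zero edges incident to each cycle-vertex sum to $0$), we obtain $\bm{y}|_C = \lambda_C\, \bm{c}|_C$ for some $\lambda_C \in \R$ on each non-trivial component $C$.

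The central task is then to force all $\lambda_C$'s to coincide, using the odd-set inequalities. Since $\bm{c}(\delta(v)) = \bm{y}(\delta(v)) = 0$, for any $S \subseteq V$ we have $\bm{c}(\delta(S)) = -2\bm{c}(E[S])$ and $\bm{y}(\delta(S)) = -2\bm{y}(E[S])$, so the implication $\bm{c}(\delta(S)) = 0 \Rightarrow \bm{y}(\delta(S)) = 0$ reduces to $\bm{c}(E[S]) = 0 \Rightarrow \bm{y}(E[S]) = 0$. For any two distinct cycles $C, C'$ of $M_1 \Delta M_2$, I would pick edges $v_1 v_2 \in M_1 \cap C$ and $u_2 u_3 \in M_2 \cap C'$ and exhibit a $5$-element odd set $S$ in one of three ways: (a) if some trivial component contains a vertex $z$, take $S = \{v_1, v_2, u_2, u_3, z\}$; (b) otherwise, if some cycle has length at least $6$, take that cycle to be $C$ and set $S = \{v_1, v_2, v_4, u_2, u_3\}$, where $v_4$ is the fourth vertex of $C$ (distinct from and non-adjacent to $v_1, v_2$ in $C$ because $|C| \ge 6$); (c) otherwise every cycle has length exactly $4$ and, since $n \geq 10$ forces $n \geq 12$ here, there are at least three such cycles, so pick any vertex $w$ from a third cycle $C''$ and set $S = \{v_1, v_2, u_2, u_3, w\}$. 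In every case the only edges of $E[S]$ lying in $M_1 \cup M_2$ are $v_1 v_2$ and $u_2 u_3$, whence $\bm{c}(E[S]) = 1-1 = 0$ and $\bm{y}(E[S]) = \lambda_C - \lambda_{C'}$; hence $\lambda_C = \lambda_{C'}$. By transitivity all $\lambda_C$'s coincide, so $\bm{y} = \lambda \bm{c}$ for some $\lambda \in \R$, which is the desired contradiction.

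The main obstacle is sub-case (c): when every component of $M_1 \Delta M_2$ is a $4$-cycle, any odd-sized intersection of $S$ with a single $4$-cycle contributes $0$ to $\bm{c}(E[S])$, so one must import a parity-fixing vertex from outside $C \cup C'$. The hypothesis $n \geq 10$ is precisely what guarantees the existence of such a third cycle; this is consistent with Theorem~\ref{thm:perfect_matching}, which records that the case $n = 8$ is genuinely different.
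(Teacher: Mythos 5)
Your proof is correct and follows essentially the same route as the paper's: both establish $\bm{y}|_C = \lambda_C\, \bm{c}|_C$ on each alternating cycle $C$ from the degree equalities, then equate the $\lambda_C$'s by applying a blossom inequality $\bm{x}(\delta(S)) \ge 1$ with $|S| = 5$. The paper selects the fifth vertex of $S$ uniformly as any node not adjacent in $(V, M_1 \Delta M_2)$ to $v_1, v_2, u_2, u_3$ (such a node exists by a degree count, since each of the four has degree two and $n > 8$), which absorbs your three sub-cases (a), (b), (c) into a single observation; your case analysis is correct but slightly more elaborate than needed.
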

\begin{proof}
Let us assume for the sake of contradiction that $\bm{c}$ is not a circuit. Then there exists a non-zero vector $\bm{y}$ such that $\supp(D\bm{y}) \subset \supp(D\bm{c})$, where $D$ is the constraint matrix of \eqref{eq:perfect_matching_description}. Since the inequalities $\bm x_e \geq 0$, $e \in E$ are in the minimal linear description~\eqref{eq:perfect_matching_description} and $\supp(D\bm{y}) \subset \supp(D\bm{c})$,
we have $\bm y_e =0$ for every edge $e$ such that $\bm c_e =0$. 

Let $e'=\{v_1,v_2\}$ be such that $y_{e'} \neq 0$. Without loss of generality, possibly rescaling vector $\bm y$ we can assume $\bm y_{e'}=1$. Let $C'$ be the connected component of $(V, M_1 \Delta M_2)$ containing $e'$. By exchanging the roles of $M_1$ with $M_2$, we can assume $\bm c_{e'}=1$.
Moreover, for every node $v$ we have $\bm{c}(\delta(v))=0$. Since $\supp(D\bm{y}) \subset \supp(D\bm{c})$, we have $\bm{y}(\delta(v))=0$ for every node $v$. Since $C'$ is an even cycle, $\bm{y}(\delta(v))=0$, $v\in V$ implies $\bm y_{e} = \bm c_{e}$ for all edges $e \in C'$. In particular, for an edge $f = \{v_2, v_3\}$, $f\in (M_1 \Delta M_2)$, which is different from the edge $e'$, we have $\bm y_f =\bm c_f=-1$.

Now let $C''$ be a connected component of $(V,M_1 \Delta M_2)$, different from $C'$. Note that such $C''$ exists since otherwise $(V,M_1 \Delta M_2)$ contains only one non-trivial connected component, implying that $\bm{c}$ is a circuit by Lemma~\ref{lem:perfect_matching1}. Let  $e''=\{u_1,u_2\}$ be an edge in $C''$ such that $\bm c_{e''}=-1$. Again, since $\bm{y}(\delta(v))=0$ for every node $v$ and since $C''$ is an even cycle, there exists $\gamma$ such that $\bm{y}_{e}=\gamma \bm{c}_e$ for every edge $e$ in $C''$.

Let $z$ be a node that is not adjacent to any of the nodes $u_1,u_2,v_1,v_2$ in the graph $(V,M_1 \Delta M_2)$. Note that such a node exists, because each node in $(V,M_1 \Delta M_2)$ has degree exactly $2$, and we have $n>8$. Also note that such node $z$ is not equal to any of the nodes  $u_1,u_2,v_1,v_2$, since $\{u_1,u_2\}$ and $\{v_1,v_2\}$ are edges in $(V,M_1 \Delta M_2)$.
Let us define $S:=\{z,u_1,u_2,v_1,v_2\}$. It is straightforward to check that $\bm{c}(\delta(S))=0$. Indeed, since $\supp(D\bm{y}) \subset \supp(D\bm{c})$ and the constraint $\bm x(\delta(S))\geq 1$ is present in~\eqref{eq:perfect_matching_description}, we have that $\bm{y}(\delta(S))=0$.
On the other side, $\bm{y}(\delta(S))= -2 - 2\gamma = 0$,  implying $\gamma = 1$ and therefore $\bm y_{e} = \bm c_{e}$ for all $e\in C''$. Repeating this argument for all non-trivial connected components of $(V,M_1 \Delta M_2)$, we get $\bm{y}=\bm{c}$, a contradiction.

\end{proof}

Now, with Lemma~\ref{lem:perfect_matching2} at hand, we are ready to prove Theorem~\ref{thm:perfect_matching}.

\begin{proof}\emph{(Proof of Theorem \ref{thm:perfect_matching})}
To show that the corresponding lower bounds for the circuit diameter hold, it is enough to show that
$$P_{\permatch}(8)\ge 2\,.$$
To show this, let us define two perfect matchings in the complete graph $K_8$ with the node set $\{v_1,\ldots, v_8\}$
$$M_1:=\{v_1v_2, v_3v_4, v_5v_6, v_7v_8\}\quad and \quad M_2:=\{v_1v_4, v_3v_2, v_5v_8, v_7v_6\}\,.$$
The vector $\bm{c}:=\chi(M_1)-\chi(M_2)$ is not a circuit, since the vector $D\bm{c}$ has a larger support than $D\left(\chi(\{v_1v_2, v_3v_4\})-\chi(\{v_1v_4, v_3v_2\})\right)$, where $D$ is the linear constraint matrix of the linear description of  $P_{\permatch}(8)$. Hence, we have $$\cd (P_{\permatch}(8))\ge 2\,.$$

Now let us prove the corresponding upper bounds for $\cd (P_{\match}(n))$, $n\ge 4$. For $n=4$, $n=6$ and two perfect matchings $M_1$ and $M_2$, $(V, M_1\triangle M_2)$ has at most one non-trivial connected component. This fact together with Corollary~\ref{cor:perfect_matching1} implies $\cd (P_{\match}(n))\le 1$  for $n=4$, $n=6$.

For $n=8$ and two perfect matchings $M_1$ and $M_2$, $(V,M_1\triangle M_2)$ has at most two non-trivial connected components.  Again, this fact together with Corollary~\ref{cor:perfect_matching1} implies $\cd (P_{\permatch}(8))\le 2$. For $n\ge 10$, the upper bound follows from Lemma~\ref{lem:perfect_matching2}.
\end{proof}

\section{Traveling Salesman Polytope}\label{sec:TSP}

The Traveling Salesman polytope is defined as the convex hull of characteristic vectors of Hamiltonian cycles in a complete graph i.e.,
$$
	P_{\ts}(n):=\conv \left \{ \chi(T)\,:\, T \text{ is a Hamiltonian cycle in } K_n\right \}\,.
$$
In fact, no linear description of the Traveling Salesman polytope is known for general $n$. Moreover any linear description of $P_{\ts}(n)$, which admits an efficient way to test whether a given linear constraint belongs to this description, would have consequences for the long-standing conjecture $\mathcal{NP}=co-\mathcal{NP}$~\cite{Schrijver2003}(Section 5.12).
However, for some small values of $n$ a linear description of the Traveling Salesman polytope is known. For example, $P_{\ts}(5)$ can be described by nonnegativity constraints and the constraints below~\cite{FN92}
\begin{equation}\label{eq:tsp_description_5}
	\begin{aligned}
		&\bm{x}\left(E(S)\right)\le |S|-1 &&\text{for all } S,\,  S \subseteq V,\, 2\leq|S|\leq |V|-2\\
		&\bm{x}(\delta(v))= 2 &&\text{for all } v\in V\\
		&\bm{x}\geq \bm{0}\,.
	\end{aligned}
\end{equation}
Moreover,  the linear inequalities from~\eqref{eq:tsp_description_5} define facets of the Traveling Salesman polytope $P_{\ts}(n)$ for all $n \geq 4$~\cite{Grotschel79}. For $n\ge 6$ the inequalities
\begin{equation}\label{eq:comb}
	\bm{x}_{uv}+\bm{x}_{vw}+\bm{x}_{wu}+\bm{x}_{uu'}+ \bm{x}_{vv'}+\bm{x}_{ww'}\le 4\quad\text{for distinct } u,v,w, u', v', w' \in V
\end{equation}
also define facets of $P_{\ts}(n)$~\cite{Grotschel79}. The inequality~\eqref{eq:comb} belongs to the well-known family of \emph{comb inequalities}, which are valid for the Traveling Salesman polytope. Surprisingly, such scarce knowledge on linear description of the Traveling Salesman polytope is enough for us to prove the following theorem.

\begin{thm}\label{thm:tsp}
For the Traveling Salesman polytope we have:
	$$\cd (P_{\ts}(n))=\begin{cases}
						1 &n=3,4\\
						2 &n=5\\
						1 &n\ge 6\,.
						\end{cases}$$
\end{thm}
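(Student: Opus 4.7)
I would handle the cases $n\in\{3,4\}$, $n=5$, and $n\geq 6$ separately. The cases $n=3$ and $n=4$ are immediate: $P_{\ts}(3)$ is a single point, and $P_{\ts}(4)$ has only three vertices forming a $2$-simplex, so any two of them are joined by an edge of the polytope. The trivial lower bound $\cd(P_{\ts}(n))\geq 1$ holds whenever two distinct tours exist, so all that remains is to prove $\cd(P_{\ts}(n))\leq 1$ for $n\geq 6$ and $\cd(P_{\ts}(5))=2$.

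For $n\geq 6$ the key lemma I would prove is: \emph{for any two distinct tours $T_1,T_2$ in $K_n$, the vector $\bm c := \chi(T_1)-\chi(T_2)$ is a circuit of $P_{\ts}(n)$}. Once this is established, every pair of vertices of $P_{\ts}(n)$ is one circuit step apart. The proof follows the template of Lemma~\ref{lem:matching2} and Lemma~\ref{lem:perfect_matching2}: assume there is a non-zero $\bm y$ with $\supp(D\bm y)\subseteq \supp(D\bm c)$, where $D$ is the constraint matrix of the system~\eqref{eq:tsp_description_5} augmented with the comb inequalities~\eqref{eq:comb}, and conclude $\bm y = \lambda \bm c$. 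Non-negativity immediately forces $\bm y_e=0$ for $e\notin T_1\triangle T_2$, while the degree equalities give $\bm y(\delta(v))=0$ at every vertex, restricting $\bm y$ to be supported on and balanced across $T_1\triangle T_2$.

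The crux of the proof is to show that $\bm y_e/\bm c_e$ takes a single value across $\supp(\bm c)$. My plan is to exhibit, for each pair of ``pieces'' of $T_1\triangle T_2$ that are not yet forced to share a scalar by the degree and non-negativity rows, a subtour inequality $\bm x(E(S))\leq |S|-1$ or a comb inequality~\eqref{eq:comb} that is tight at both $T_1$ and $T_2$ — giving $r\cdot\bm c=0$, hence $r\cdot\bm y=0$ — and whose supporting edges straddle these two pieces so as to link their respective scalars into one linear equation. The principal obstacle is this linking step: one must show that for every possible configuration of $T_1\triangle T_2$ (in particular at degree-$4$ vertices where the two tours cross and where the single degree equation leaves three free parameters) suitable tight constraints exist. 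Because $n\geq 6$, one can always select six distinct vertices $u,v,w,u',v',w'$ so that the corresponding comb~\eqref{eq:comb} is tight at both tours and its six distinguished edges carry the desired mix; this is precisely where the hypothesis $n\geq 6$ enters, and it is what fails at $n=5$.

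For $n=5$ I would treat the two bounds separately. For the lower bound $\cd(P_{\ts}(5))\geq 2$ I would take the two edge-disjoint tours $T_1=(1,2,3,4,5)$ and $T_2=(1,3,5,2,4)$ and produce a witness $\bm y$ whose $D$-support is strictly contained in $\supp(D\bm c)$, concretely $\bm y=\chi(T_1)-\chi(T')$ with $T'=(1,2,3,5,4)$ so that $T_1\triangle T'$ is a single $4$-cycle sitting inside $T_1\triangle T_2$; verifying the strict containment reduces to checking degree and subtour rows, which is a short calculation, and is possible precisely because the combs~\eqref{eq:comb} are absent at $n=5$. For the upper bound $\cd(P_{\ts}(5))\leq 2$, I would show that any two tours in $K_5$ can be connected by a two-step circuit walk, possibly through an intermediate non-vertex point, by a finite case analysis over the $12$ tours of $K_5$.
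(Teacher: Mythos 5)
Your high-level strategy matches the paper's: reduce $\cd(P_{\ts}(n))\le 1$ for large $n$ to showing that $\bm c=\chi(T_1)-\chi(T_2)$ is a circuit for \emph{every} pair of distinct tours, using nonnegativity rows to confine $\bm y$ to $T_1\triangle T_2$, the degree equalities to balance it at each vertex, and then subtour and comb rows with the same slack at $T_1$ and $T_2$ to tie the free scalars together. Your $n=5$ lower bound (a $4$-cycle difference sitting inside the edge-disjoint union $T_1\triangle T_2$) is essentially identical to the paper's witness, and your $n=5$ upper bound by finite case analysis is a valid, if less elegant, substitute for the paper's observation that any two tours sharing an edge are one circuit step apart.

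There is a genuine gap in the range of $n$ for which your key lemma can be pushed through. You assert that the general argument works uniformly for $n\ge 6$, with the only role of $n\ge 6$ being to ensure six distinct vertices exist for the comb~\eqref{eq:comb}. That is not where the hypothesis actually bites. The hard step — ruling out the extra degrees of freedom at degree-$4$ vertices of $T_1\triangle T_2$ — repeatedly requires choosing \emph{auxiliary} nodes $s,t,z$ that are distinct from four or five already-named nodes $u,v,w,u',w'$ \emph{and} avoid a handful of forbidden incidences in $T_1\triangle T_2$. Counting, this requires $n\ge 7$ throughout, and in the case where $T_1$ and $T_2$ are edge-disjoint it sometimes requires $n\ge 8$. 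The paper handles $n=6$ entirely by enumerating the nine symmetry types of pairs of tours in $K_6$, and separately enumerates the three symmetry types of edge-disjoint pairs in $K_7$, precisely because the general counting argument does not reach down that far. As written, your proposal would stall at $n=6$ (and at $n=7$ in the disjoint sub-case) when the promised auxiliary nodes fail to exist, and you would need to fall back on a finite check there just as the paper does. A second, smaller point: you should phrase the selection of useful rows as ``rows $r$ with $r\cdot\bm c=0$'' (equal slack at $T_1$ and $T_2$) rather than ``tight at both tours''; tightness at the two vertices is sufficient but not necessary, and the paper in fact uses subtour and comb rows that are not active at either vertex.
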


The proof of Theorem~\ref{thm:tsp} follows from a series of lemmata below.

\begin{lem}\label{lem:tsp1}
For $n=5$ we have $\cd (P_{\ts}(n))=2$.
\end{lem}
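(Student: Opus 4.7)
The plan is to prove $\cd(P_{\ts}(5))\ge 2$ and $\cd(P_{\ts}(5))\le 2$ separately, in both directions applying Proposition~\ref{prop:system_defn} to the linear description~\eqref{eq:tsp_description_5}. A preliminary reduction that organises the argument is that for distinct tours $T_1,T_2$ in $K_5$ the intersection satisfies $|T_1\cap T_2|\in\{0,2,3\}$. Indeed, if $|T_1\cap T_2|=1$ with shared edge $uv$, then $|T_1\cup T_2|=9$, leaving exactly one edge $f$ of $K_5$ uncovered; but every vertex $w\in V\setminus\{u,v\}$ has no shared edge under $T_1,T_2$, meaning $T_1$ and $T_2$ contribute four distinct edges at $w$ and so every edge incident to $w$ lies in $T_1\cup T_2$, forcing $f=uv$, a contradiction.

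For the lower bound I would use the essentially unique pair of edge-disjoint Hamiltonian cycles $T_1:=v_1v_2v_3v_4v_5v_1$ and $T_2:=v_1v_3v_5v_2v_4v_1$, whose edges partition $E(K_5)$. Setting $\bm{c}:=\chi(T_2)-\chi(T_1)$, its support is all of $E(K_5)$, so each $\bm{x}_e\ge 0$ row is active in $D\bm{c}$; moreover, since $T_1\cup T_2=E(K_5)$, for any $S\subseteq V$ with $|S|\in\{2,3\}$ we have $\bm{c}(E(S))=|E(S)|-2|T_1\cap E(S)|$, which is odd, hence non-zero. Therefore $\supp(D\bm{y})\subseteq\supp(D\bm{c})$ holds trivially for every $\bm{y}\in\Ker(A)$, and since $\dim\Ker(A)=|E(K_5)|-|V|=5>1$, Proposition~\ref{prop:system_defn} rules out $\bm{c}$ being a circuit, giving $\cdist(\chi(T_1),\chi(T_2))\ge 2$.

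For the upper bound I would dispatch the three intersection sizes. When $|T_1\cap T_2|=3$, the symmetric difference $(V,T_1\triangle T_2)$ is a $4$-cycle plus an isolated vertex; when $|T_1\cap T_2|=2$, a degree count shows it must be a ``bowtie'' of two triangles sharing a vertex. In either case I would solve the small linear system $\{A\bm{y}=\bm{0},\;\bm{y}_e=0\text{ for }e\notin\supp(\bm{c})\}$, verifying that its non-zero solutions are exactly the scalar multiples of $\bm{c}:=\chi(T_2)-\chi(T_1)$; Proposition~\ref{prop:system_defn} then yields that $\bm{c}$ is a circuit, and since the step length $\alpha=1$ is automatically maximal (the coordinates indexed by $T_1\setminus T_2$ would otherwise go negative), we obtain $\cdist(\chi(T_1),\chi(T_2))=1$. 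For the remaining case $|T_1\cap T_2|=0$, I would insert an intermediate tour $T^*$ obtained from $T_1$ by a single $2$-opt swap using two $T_2$-edges — for instance $T^*=v_1v_2v_3v_5v_4v_1$ in the setup above, for which $|T^*\cap T_1|=3$ and $|T^*\cap T_2|=2$ — so the two previous sub-cases assemble into a circuit walk $\chi(T_1)\to\chi(T^*)\to\chi(T_2)$ of length~$2$.

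The main technical hurdle will be the bowtie sub-case: the graph $(V,T_1\triangle T_2)$ has a vertex of degree $4$, so its cycle space is \emph{a priori} two-dimensional, and one must exploit the balance equation $\bm{y}(\delta(v))=\bm{0}$ at the centre vertex to force the two triangles to receive opposite orientations, collapsing the solution space back to one dimension. Throughout, only the coordinate-support constraints and the equalities $A\bm{y}=\bm{0}$ will be needed — the non-trivial subtour inequalities ($|S|=3$) never come into play.
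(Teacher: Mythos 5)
Your proposal is correct, and it follows the same overall decomposition as the paper: split on whether $T_1\cap T_2$ is empty, show that $\chi(T_2)-\chi(T_1)$ is a circuit whenever the two tours share an edge, and in the disjoint case route through an intermediate tour $T^*$ to get a walk of length two. Where you differ is the mechanism of verification. The paper exhibits the two non-disjoint configurations in Figure~\ref{fig:tsp5 not disjoint} and asserts that the difference vectors are circuits; you instead classify the configurations combinatorially ($|T_1\cap T_2|\in\{0,2,3\}$, with symmetric difference a $4$-cycle plus an isolated vertex or a bowtie) and verify the circuit property via Proposition~\ref{prop:system_defn} by taking $B'$ to be the nonnegativity rows outside $\supp(\bm{c})$ and checking that the resulting linear system has a one-dimensional kernel. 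For the lower bound, the paper exhibits a concrete competing direction $\chi(M_1)-\chi(M_2)$; you instead note that $B\bm{c}$ has full row support (the subtour values $\bm{c}(E(S))$ are odd, hence non-zero), so the only admissible submatrix $B'$ is empty, and $\dim\Ker(A)=10-5=5>1$ finishes. Both routes work; yours is more systematic, avoids picture-based inspection, and your preliminary observation that $|T_1\cap T_2|=1$ is impossible organises the casework cleanly.

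One small remark on your closing paragraph: you describe the bowtie case as the main technical hurdle because its cycle space is \emph{a priori} two-dimensional. But the relevant space is the kernel of the \emph{unsigned} incidence matrix of $K_5$ restricted to vectors supported on the bowtie edges, and since the bowtie is a connected non-bipartite spanning subgraph on $5$ nodes with $6$ edges, that kernel is automatically $6-5=1$-dimensional. The degree equation at the centre vertex is precisely what distinguishes this kernel from the (two-dimensional) cycle space, so once you write the computation out the hurdle dissolves; your instinct to look at that equation was right, but there is no real difficulty to manage.
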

\begin{proof}
Recall, that the Traveling Salesman polytope $P_{\ts}(5)$ admits the minimal linear description~\eqref{eq:tsp_description_5}~\cite{FN92}.

For two Hamiltonian cycles $T_1$, $T_2$ in $K_5$ without a common edge (see Figure~\ref{fig:tsp5 disjoint}), the vector $\bm{c}:=\chi(T_1)-\chi(T_2)$ is not a circuit of $P_{\ts}(5)$. Indeed, $\supp (D\bm y)\subset \supp (D\bm c)$ for the non-zero vector $\bm y:=\chi(M_1)-\chi(M_2)$, where $D$ is the constraint matrix of~\eqref{eq:tsp_description_5} and $M_1$, $M_2$ are two different matchings in $K_5$ on the same four nodes. Thus $\cd (P_{\ts}(5))\ge 2$. 
\begin{figure}[!ht]
	\begin{center}
		\begin{tikzpicture}[xscale=1.2,yscale=1.5]
	
			\tikzset{
				graph node/.style={shape=circle,draw=black,inner sep=0pt, minimum size=4pt
      						  }
					}
			\tikzset{	plus edge/.style={red, thick
      						  }
					}		
			\tikzset{			minus edge/.style={blue, line width=.8mm, dashed
      						  }
					}

			\node[graph node](v1) at (2,1){};
			\node[graph node](v2) at (2,-.5){};
			\node[graph node](v4) at (-2,-.5){};
			\node[graph node](v3) at (0,-1.5){};
			\node[graph node](v5) at (-2,1){};

			\draw[plus edge] (v5)--(v1)--(v2)--(v3)--(v4)--(v5);
			\draw[minus edge] (v5)--(v2)--(v4)--(v1)--(v3)--(v5);
		\end{tikzpicture}
	\end{center}
	\caption{Hamiltonian cycles $T_1$ and $T_2$ in $K_5$ without a common edge. Here, the edges of $T_2$ are depicted as dashed edges.}
	\label{fig:tsp5 disjoint}
\end{figure}
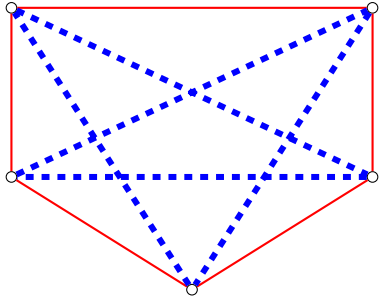

The bound $\cd (P_{\ts}(5))\le 2$ follows from the fact that for any two Hamiltonian cycles $T_1$, $T_2$ such that $T_1\cap T_2\neq \varnothing$, $\chi(T_1)-\chi(T_2)$ is a circuit of $P_{\ts}(5)$. Indeed, up to symmetry we have two possible cases (see Figure~\ref{fig:tsp5 not disjoint}) and in each of these cases $\chi(T_1)-\chi(T_2)$ is a circuit.
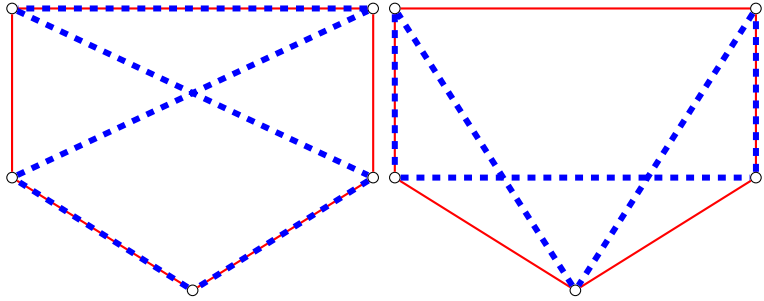
\begin{figure}[!ht]
	\begin{center}
		\begin{tikzpicture}[xscale=1.2,yscale=1.5]
	
			\tikzset{
				graph node/.style={shape=circle,draw=black,inner sep=0pt, minimum size=4pt
      						  }
					}
			\tikzset{	plus edge/.style={red, thick
      						  }
					}		
			\tikzset{			minus edge/.style={blue, line width=.8mm, dashed
      						  }
					}

			\node[graph node](v1) at (2,1){};
			\node[graph node](v2) at (2,-.5){};
			\node[graph node](v4) at (-2,-.5){};
			\node[graph node](v3) at (0,-1.5){};
			\node[graph node](v5) at (-2,1){};

			\draw[plus edge] (v5)--(v1)--(v2)--(v3)--(v4)--(v5);
			\draw[minus edge] (v5)--(v2)--(v3)--(v4)--(v1)--(v5);
		\end{tikzpicture}
				\begin{tikzpicture}[xscale=1.2,yscale=1.5]
	
			\tikzset{
				graph node/.style={shape=circle,draw=black,inner sep=0pt, minimum size=4pt
      						  }
					}
			\tikzset{	plus edge/.style={red, thick
      						  }
					}		
			\tikzset{			minus edge/.style={blue, line width=.8mm, dashed
      						  }
					}

			\node[graph node](v1) at (2,1){};
			\node[graph node](v2) at (2,-.5){};
			\node[graph node](v4) at (-2,-.5){};
			\node[graph node](v3) at (0,-1.5){};
			\node[graph node](v5) at (-2,1){};

			\draw[plus edge] (v5)--(v1)--(v2)--(v3)--(v4)--(v5);
			\draw[minus edge] (v5)--(v4)--(v2)--(v1)--(v3)--(v5);
		\end{tikzpicture}
	\end{center}
	\caption{Hamiltonian cycles $T_1$ and $T_2$ in $K_5$ with a common edge. Here, the edges of $T_2$ are depicted as dashed edges.}
	\label{fig:tsp5 not disjoint}
\end{figure}
\end{proof}

\begin{lem}\label{lem:tsp2}
For $n=6$ we have $\cd (P_{\ts}(n))=1$.
\end{lem}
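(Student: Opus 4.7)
The plan is to establish $\cd(P_{\ts}(6)) \le 1$ by proving that for any two distinct Hamiltonian cycles $T_1$ and $T_2$ in $K_6$, the vector $\bm{c} := \chi(T_1) - \chi(T_2)$ is a circuit of $P_{\ts}(6)$. Once $\bm{c}$ is known to be a circuit, since $\chi(T_1) = \chi(T_2) + 1 \cdot \bm{c}$ and $\chi(T_1)$ is a vertex of the polytope (so the step size $\alpha = 1$ is maximal, as moving further in direction $\bm{c}$ would violate the facet $\bm{x}_e \le 1$ for any $e \in T_1 \setminus T_2$), we obtain $\cdist(\chi(T_2), \chi(T_1)) \le 1$. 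The matching lower bound $\cd(P_{\ts}(6)) \ge 1$ is immediate since $P_{\ts}(6)$ has more than one vertex.

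To show $\bm{c}$ is a circuit, I invoke Proposition~\ref{prop:system_defn}: it suffices to exhibit a submatrix $B'$ of the facet-defining constraint matrix of $P_{\ts}(6)$ such that $\bm{c}$ is the unique (up to scaling) non-zero solution of $A\bm{y} = \bm{0}$, $B'\bm{y} = \bm{0}$, where $A$ encodes the degree equations $\bm{x}(\delta(v)) = 2$. The full linear description of $P_{\ts}(6)$ is unknown, but this is harmless: any $B'$ drawn solely from known facet-defining inequalities is still a submatrix of the complete facet matrix, so suffices. I will use the nonnegativity constraints, the subtour inequalities~\eqref{eq:tsp_description_5} with $|S| \in \{2, 3, 4\}$, and the comb inequalities~\eqref{eq:comb}.

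The proof proceeds by case analysis on $|T_1 \cap T_2| \in \{0, 1, 2, 3, 4\}$; the value $|T_1 \cap T_2| = 5$ is impossible, since five shared edges would necessarily form a Hamiltonian path on the six vertices, which extends uniquely to a Hamiltonian cycle by adding its closing edge. I organize the analysis through the signed symmetric difference $T_1 \triangle T_2$ (label $+$ on edges of $T_1 \setminus T_2$ and $-$ on edges of $T_2 \setminus T_1$); at each vertex the $+$ and $-$ labels balance, so vertex degrees in $T_1 \triangle T_2$ lie in $\{0, 2, 4\}$. In each case I impose on $\bm{y}$ the degree equations and the nonnegativity rows for all edges with $\bm{c}_e = 0$ (forcing $\bm{y}_e = 0$ off the symmetric difference), and then append carefully chosen tight subtour and comb inequalities (those whose defining rows have zero inner product with $\bm{c}$) until the system pins $\bm{y}$ down to $\lambda \bm{c}$.

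I expect the main obstacle to lie in the extremal case $|T_1 \cap T_2| = 0$, where $T_1 \cup T_2$ is a $4$-regular graph on six vertices, necessarily isomorphic to $K_{2,2,2}$. Its signed version decomposes into several alternating cycles, so degree constraints alone leave one free scalar per alternating component; tying these scalars together requires tight comb inequalities from~\eqref{eq:comb}, which become facet-defining precisely for $n \ge 6$. With no ``spare'' vertex in $K_6$ to supply extra flexibility, selecting a sufficient family of tight combs whose simultaneous vanishing on $\bm{y}$ collapses the solution space to a single direction is the delicate step of the argument.
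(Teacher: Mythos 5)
Your strategy is sound and matches the paper's approach in spirit: show that $\bm c := \chi(T_1)-\chi(T_2)$ is a circuit for every pair of distinct tours in $K_6$, then observe that the circuit step of maximal length $\alpha=1$ from $\chi(T_2)$ lands exactly at $\chi(T_1)$ because $\bm x_e\le 1$ is facet-defining. Your preliminary observations are all correct: $|T_1\cap T_2|=5$ is impossible, disjoint tours force $T_1\cup T_2\cong K_{2,2,2}$, and Proposition~\ref{prop:system_defn} applies even though no full linear description of $P_{\ts}(n)$ is known, since any $B'$ drawn from known facets is still a submatrix of the true facet matrix.

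However, what you have written is a plan rather than a proof. The entire content of the lemma lies in actually verifying, for each pair $(T_1,T_2)$ up to symmetry, that a suitable tight subsystem pins $\bm y$ down to $\langle\bm c\rangle$. The paper discharges this by enumerating the nine isomorphism classes of signed configurations in Figure~\ref{fig:tsp6} and checking each one; your proposal defers exactly this step, and you explicitly flag the disjoint case as the ``delicate step'' without carrying it out. Organizing instead by $|T_1\cap T_2|\in\{0,\dots,4\}$ is fine as a bookkeeping device, and your remark that the degree constraints leave one free scalar per alternating component correctly identifies why extra tight subtour and comb rows are needed when the symmetric difference is disconnected. But until, for each residual case, you exhibit a concrete collection of tight rows (with zero inner product against $\bm c$) whose common kernel is one-dimensional, the circuit property has not been established and the lemma remains unproved.
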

\begin{proof}
Let us consider two different Hamiltonian cycles $T_1$ and $T_2$ in $K_6$, then up to symmetry and up to exchanging the roles of $T_1$ and $T_2$ we have one of the nine cases (see Figure~\ref{fig:tsp6}). In all these nine cases, $\bm y :=\chi(T_1)-\chi(T_2)$ is a circuit of $P_{\ts}(6)$.
 
 \begin{figure}[!ht]
	\begin{center}
		\begin{tikzpicture}[xscale=.9,yscale=1.2]
	
			\tikzset{
				graph node/.style={shape=circle,draw=black,inner sep=0pt, minimum size=4pt
      						  }
					}
			\tikzset{	plus edge/.style={red, thick
      						  }
					}		
			\tikzset{			minus edge/.style={blue, line width=.8mm, dashed
      						  }
					}

			\node[graph node](v1) at (2,1){};
			\node[graph node](v2) at (2,-.5){};
			\node[graph node](v4) at (-2,-.5){};
			\node[graph node](v3) at (0,-1.5){};
			\node[graph node](v5) at (-2,1){};
			\node[graph node](v6) at (0,2){};

			\draw[plus edge] (v6)--(v1)--(v2)--(v3)--(v4)--(v5)--(v6);
			\draw[minus edge] (v6)--(v3)--(v5)--(v1)--(v2)--(v4)--(v6);
		\end{tikzpicture}
		\begin{tikzpicture}[xscale=.9,yscale=1.2]
	
			\tikzset{
				graph node/.style={shape=circle,draw=black,inner sep=0pt, minimum size=4pt
      						  }
					}
			\tikzset{	plus edge/.style={red, thick
      						  }
					}		
			\tikzset{			minus edge/.style={blue, line width=.8mm, dashed
      						  }
					}

			\node[graph node](v1) at (2,1){};
			\node[graph node](v2) at (2,-.5){};
			\node[graph node](v4) at (-2,-.5){};
			\node[graph node](v3) at (0,-1.5){};
			\node[graph node](v5) at (-2,1){};
			\node[graph node](v6) at (0,2){};

			\draw[plus edge] (v6)--(v1)--(v2)--(v3)--(v4)--(v5)--(v6);
			\draw[minus edge] (v1)--(v2)--(v4)--(v6)--(v5)--(v3)--(v1);
		\end{tikzpicture}
				\begin{tikzpicture}[xscale=.9,yscale=1.2]
	
			\tikzset{
				graph node/.style={shape=circle,draw=black,inner sep=0pt, minimum size=4pt
      						  }
					}
			\tikzset{	plus edge/.style={red, thick
      						  }
					}		
			\tikzset{			minus edge/.style={blue, line width=.8mm, dashed
      						  }
					}

			\node[graph node](v1) at (2,1){};
			\node[graph node](v2) at (2,-.5){};
			\node[graph node](v4) at (-2,-.5){};
			\node[graph node](v3) at (0,-1.5){};
			\node[graph node](v5) at (-2,1){};
			\node[graph node](v6) at (0,2){};

			\draw[plus edge] (v6)--(v1)--(v2)--(v3)--(v4)--(v5)--(v6);
			\draw[minus edge] (v1)--(v3)--(v2)--(v4)--(v6)--(v5)--(v1);
		\end{tikzpicture}

		\begin{tikzpicture}[xscale=.9,yscale=1.2]
	
			\tikzset{
				graph node/.style={shape=circle,draw=black,inner sep=0pt, minimum size=4pt
      						  }
					}
			\tikzset{	plus edge/.style={red, thick
      						  }
					}		
			\tikzset{			minus edge/.style={blue, line width=.8mm, dashed
      						  }
					}

			\node[graph node](v1) at (2,1){};
			\node[graph node](v2) at (2,-.5){};
			\node[graph node](v4) at (-2,-.5){};
			\node[graph node](v3) at (0,-1.5){};
			\node[graph node](v5) at (-2,1){};
			\node[graph node](v6) at (0,2){};

			\draw[plus edge] (v6)--(v1)--(v2)--(v3)--(v4)--(v5)--(v6);
			\draw[minus edge] (v1)--(v5)--(v4)--(v2)--(v3)--(v6)--(v1);
		\end{tikzpicture}		
		\begin{tikzpicture}[xscale=.9,yscale=1.2]
	
			\tikzset{
				graph node/.style={shape=circle,draw=black,inner sep=0pt, minimum size=4pt
      						  }
					}
			\tikzset{	plus edge/.style={red, thick
      						  }
					}		
			\tikzset{			minus edge/.style={blue, line width=.8mm, dashed
      						  }
					}

			\node[graph node](v1) at (2,1){};
			\node[graph node](v2) at (2,-.5){};
			\node[graph node](v4) at (-2,-.5){};
			\node[graph node](v3) at (0,-1.5){};
			\node[graph node](v5) at (-2,1){};
			\node[graph node](v6) at (0,2){};

			\draw[plus edge] (v6)--(v1)--(v2)--(v3)--(v4)--(v5)--(v6);
			\draw[minus edge] (v1)--(v6)--(v3)--(v2)--(v5)--(v4)--(v1);
		\end{tikzpicture}
		\begin{tikzpicture}[xscale=.9,yscale=1.2]
	
			\tikzset{
				graph node/.style={shape=circle,draw=black,inner sep=0pt, minimum size=4pt
      						  }
					}
			\tikzset{	plus edge/.style={red, thick
      						  }
					}		
			\tikzset{			minus edge/.style={blue, line width=.8mm, dashed
      						  }
					}

			\node[graph node](v1) at (2,1){};
			\node[graph node](v2) at (2,-.5){};
			\node[graph node](v4) at (-2,-.5){};
			\node[graph node](v3) at (0,-1.5){};
			\node[graph node](v5) at (-2,1){};
			\node[graph node](v6) at (0,2){};

			\draw[plus edge] (v6)--(v1)--(v2)--(v3)--(v4)--(v5)--(v6);
			\draw[minus edge] (v1)--(v6)--(v3)--(v5)--(v4)--(v2)--(v1);
		\end{tikzpicture}
		\begin{tikzpicture}[xscale=.9,yscale=1.2]
	
			\tikzset{
				graph node/.style={shape=circle,draw=black,inner sep=0pt, minimum size=4pt
      						  }
					}
			\tikzset{	plus edge/.style={red, thick
      						  }
					}		
			\tikzset{			minus edge/.style={blue, line width=.8mm, dashed
      						  }
					}

			\node[graph node](v1) at (2,1){};
			\node[graph node](v2) at (2,-.5){};
			\node[graph node](v4) at (-2,-.5){};
			\node[graph node](v3) at (0,-1.5){};
			\node[graph node](v5) at (-2,1){};
			\node[graph node](v6) at (0,2){};

			\draw[plus edge] (v6)--(v1)--(v2)--(v3)--(v4)--(v5)--(v6);
			\draw[minus edge] (v1)--(v6)--(v5)--(v2)--(v3)--(v4)--(v1);
		\end{tikzpicture}
		
		\vspace{2mm}
		
		\begin{tikzpicture}[xscale=.9,yscale=1.2]
	
			\tikzset{
				graph node/.style={shape=circle,draw=black,inner sep=0pt, minimum size=4pt
      						  }
					}
			\tikzset{	plus edge/.style={red, thick
      						  }
					}		
			\tikzset{			minus edge/.style={blue, line width=.8mm, dashed
      						  }
					}

			\node[graph node](v1) at (2,1){};
			\node[graph node](v2) at (2,-.5){};
			\node[graph node](v4) at (-2,-.5){};
			\node[graph node](v3) at (0,-1.5){};
			\node[graph node](v5) at (-2,1){};
			\node[graph node](v6) at (0,2){};

			\draw[plus edge] (v6)--(v1)--(v2)--(v3)--(v4)--(v5)--(v6);
			\draw[minus edge] (v1)--(v6)--(v5)--(v4)--(v2)--(v3)--(v1);
		\end{tikzpicture}
				\begin{tikzpicture}[xscale=.9,yscale=1.2]
	
			\tikzset{
				graph node/.style={shape=circle,draw=black,inner sep=0pt, minimum size=4pt
      						  }
					}
			\tikzset{	plus edge/.style={red, thick
      						  }
					}		
			\tikzset{			minus edge/.style={blue, line width=.8mm, dashed
      						  }
					}

			\node[graph node](v1) at (2,1){};
			\node[graph node](v2) at (2,-.5){};
			\node[graph node](v4) at (-2,-.5){};
			\node[graph node](v3) at (0,-1.5){};
			\node[graph node](v5) at (-2,1){};
			\node[graph node](v6) at (0,2){};

			\draw[plus edge] (v6)--(v1)--(v2)--(v3)--(v4)--(v5)--(v6);
			\draw[minus edge] (v1)--(v5)--(v2)--(v4)--(v6)--(v3)--(v1);
		\end{tikzpicture}
	\end{center}
	\caption[All possible cases for two different Hamiltonian cycles in $K_6$]{All possible cases (up to symmetry and up to exchanging the roles of $T_1$ and $T_2$) for two different Hamiltonian cycles $T_1$ and $T_2$ in $K_6$. Here, the edges of $T_2$ are depicted as dashed edges.}
	\label{fig:tsp6}
\end{figure}
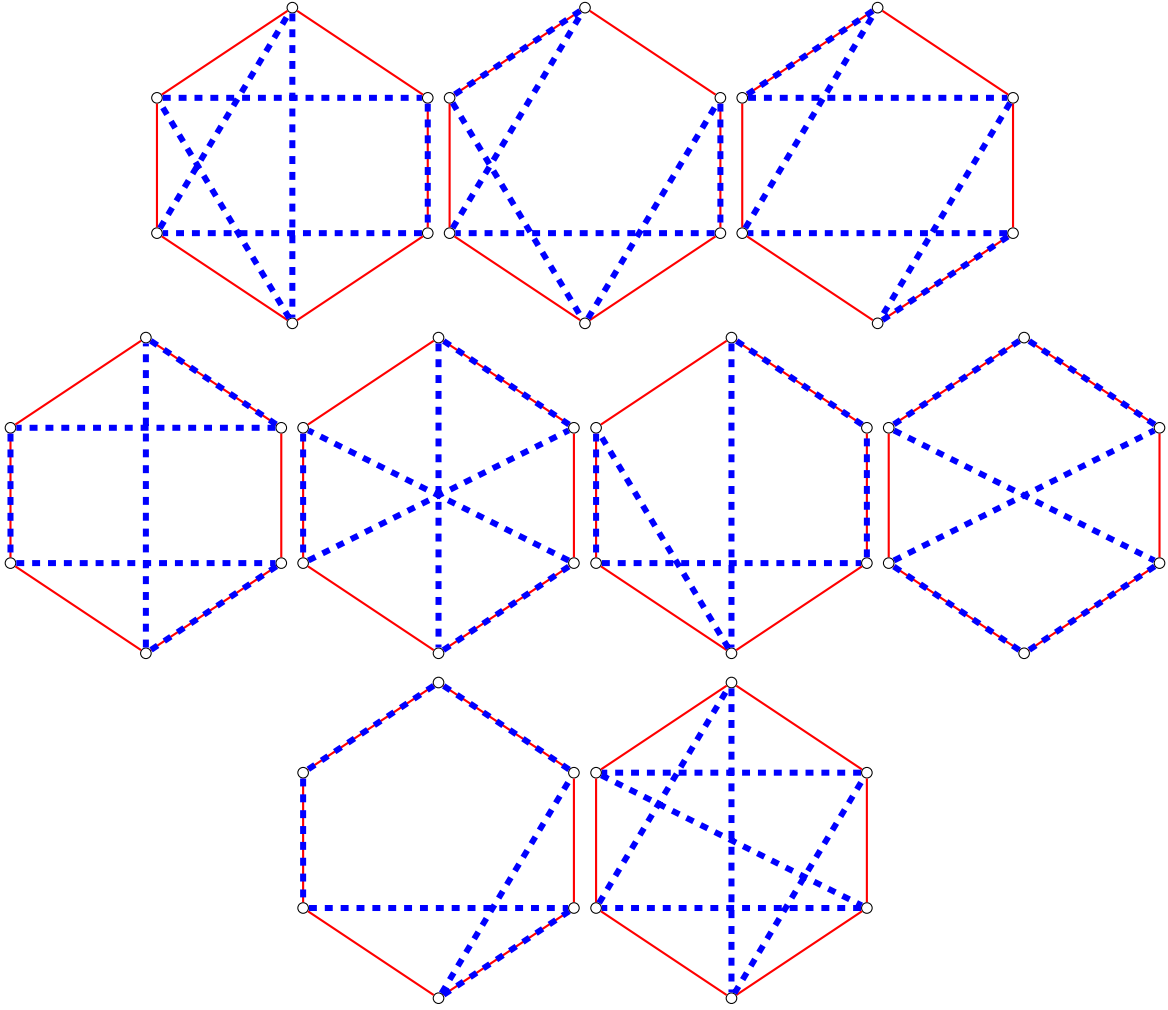
\end{proof}

\begin{lem}\label{lem:tsp3}
For $n\geq7$ we have $\cd (P_{\ts}(n))=1$.
\end{lem}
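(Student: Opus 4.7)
The plan is to show that for $n \geq 7$ and any two distinct Hamiltonian cycles $T_1, T_2$ in $K_n$, the vector $\bm{c} := \chi(T_1) - \chi(T_2)$ is a circuit of $P_{\ts}(n)$, which immediately yields $\cd(P_{\ts}(n))=1$. I would proceed by Proposition~\ref{prop:system_defn}, closely mimicking the contradiction-based arguments in Lemmas~\ref{lem:matching2} and \ref{lem:perfect_matching2}: assume there exists a nonzero $\bm{y}$ with $\supp(D\bm{y}) \subsetneq \supp(D\bm{c})$, where $D$ collects the \emph{known} facet-defining inequalities (degree equalities, nonnegativity, subtour inequalities, and the comb inequalities \eqref{eq:comb}), and derive that $\bm{y}$ must be a scalar multiple of $\bm{c}$.

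\textbf{Step 1 (support and degree reduction).} From $\bm{x}(\delta(v))=2$ I deduce $\bm{y}(\delta(v))=0$ for every vertex $v$, and from the nonnegativity rows $\bm{x}_e \geq 0$ with $e \notin T_1 \triangle T_2$ I deduce $\bm{y}_e=0$ there. Hence $\bm{y}$ is a circulation supported on $H := T_1 \triangle T_2$.

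\textbf{Step 2 (propagation inside components).} The graph $H$ has all degrees in $\{0,2,4\}$, and its edges are naturally 2-colored by membership in $T_1 \setminus T_2$ versus $T_2 \setminus T_1$, with equal color-degree at every vertex. Walking along each connected component and repeatedly applying $\bm{y}(\delta(v))=0$, I would show that on each component $C$ there is a single scalar $\lambda_C$ with $\bm{y}|_C = \lambda_C\, \bm{c}|_C$. For components with degree-$4$ crossings, small subtour inequalities $\bm{x}(E(S)) \leq |S|-1$ tight at both $T_1$ and $T_2$ (applied to sets $S$ localized around the crossings) enforce consistency across the branches.

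\textbf{Step 3 (equalizing scalars across components).} It remains to force $\lambda_C$ to be a common constant $\lambda$. For any two components $C_1, C_2$ of $H$, I would exhibit either (a) a subtour set $S$ containing edges from both $C_1$ and $C_2$ with $\bm{c}(E(S))=0$, or (b) a choice of six distinct vertices $u,v,w,u',v',w'$ realizing \eqref{eq:comb} tightly on $\bm{c}$, mixing edges of $C_1$ and $C_2$ (and possibly a ``free'' vertex outside $H$). The corresponding equality $\bm{y}(\cdot)=0$ produces a linear relation between $\lambda_{C_1}$ and $\lambda_{C_2}$ with coefficients summing to zero and at least one coefficient nonzero, forcing $\lambda_{C_1} = \lambda_{C_2}$. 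Applying this pairwise across all components yields $\bm{y} = \lambda\bm{c}$, a contradiction.

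\textbf{Step 4 (role of $n \geq 7$ and main obstacle).} The hypothesis $n \geq 7$ is exactly what provides enough room to perform the constructions of Step 3 uniformly, avoiding the obstruction encountered at $n=5$ (where edge-disjoint tours give $H=K_5$ with no ``outside'' vertex to form a suitable witness). The case analysis is driven by $|T_1 \cap T_2|$ and the component structure of $H$: when $T_1 \cap T_2$ is large enough there is a vertex outside $H$ to use in a comb; when $T_1 \cap T_2$ is small, $H$ itself contains enough vertices to realize the required subtour/comb witness internally. I expect the main obstacle to lie precisely in Step 3, specifically in verifying that, for \emph{every} configuration of $T_1 \triangle T_2$ on $n \geq 7$ vertices (including those with many small alternating components or several degree-$4$ crossings), one can always locate a facet from \eqref{eq:tsp_description_5} or \eqref{eq:comb} that is tight at $\bm{c}$ and mixes the two chosen components with canceling coefficients. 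Lifting the exhaustive $9$-case check of Lemma~\ref{lem:tsp2} to a uniform argument that depends only on $n \geq 7$ is the technical heart of the proof.
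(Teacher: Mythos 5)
Your high-level strategy matches the paper exactly: show that for $n\geq 7$ and any two distinct tours $T_1$, $T_2$, the vector $\bm c := \chi(T_1)-\chi(T_2)$ is a circuit, by supposing there is a non-scaling $\bm y$ with $\supp(D\bm y) \subsetneq \supp(D\bm c)$ and deriving $\bm y = \lambda\bm c$. Step 1 is the paper's first move, and the three-step decomposition (propagate onto $H := T_1\triangle T_2$, internal consistency per component, cross-component equalization) is a fair summary of what the paper does. But there is a genuine gap in Step 2, and you have the roles of the two facet families essentially reversed. At a degree-$4$ crossing $w$ the degree equality only says the four $\bm y$-values at $w$ sum to zero, and a tight triangle subtour at $w$ only relates edges of \emph{opposite} $\bm c$-sign there; neither by itself forces the two same-sign edges at $w$ to carry equal $\bm y$-values. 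The paper's Claims~\ref{claim:tsp_neighborhood_equal} and~\ref{claim:tsp_edges_equal}, which are precisely this intra-component consistency step, rely in every sub-case on a tight comb inequality~\eqref{eq:comb} built around $w$ together with an edge of $T_1\cap T_2$ (Case 1) or a carefully chosen ``spare'' zero-edge (Case 2); this is exactly where $n\geq 7$ (resp.\ $n\geq 8$) is invoked, to guarantee the extra nodes needed to complete the comb. So ``localized subtours'' will not get you through Step 2; the comb facets are indispensable there, and that step, not Step 3, is the technical heart of the proof.

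By contrast, your Step 3 is over-engineered: once same-sign edges within each component are shown to carry equal $\bm y$-values, linking two components is a single $4$-vertex subtour on $\{v,v',u,u'\}$ with $e=vv'$ and $g=uu'$ taken from different components and $\bm c_e = -\bm c_g$, which is tight at $\bm c$ because no $H$-edge crosses between distinct components. No comb is needed there. You should also expect an explicit case split on $T_1\cap T_2=\varnothing$ versus $T_1\cap T_2\neq\varnothing$: the disjoint case has a different claim structure (Claim~\ref{claim:tsp_edges_equal} works only for $n\geq 8$) and the disjoint $n=7$ configurations must be checked separately by exhaustion, exactly because the room you need for the comb construction is tightest there.
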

\begin{proof}
Consider two different Hamiltonian cycles $T_1$, $T_2$ in $K_n$, $n\geq 7$. For the sake of contradiction let us assume that $\bm c:=\chi(T_1)-\chi(T_2)$ is not a circuit for the Traveling Salesman polytope $P_{\ts}(n)$. Thus there exists some non-zero $\bm y$, which is not a scaling of $\bm c$, satisfying $\supp(D\bm{y}) \subseteq \supp(D\bm{c})$, where $D$ denotes the matrix of the linear constraints~\eqref{eq:tsp_description_5} and~\eqref{eq:comb}, since the linear inequalities in~\eqref{eq:tsp_description_5} and~\eqref{eq:comb} define facets for $P_{\ts}(n)$, $n\ge 7$.

\emph{Case 1: $T_1$ and $T_2$ are not disjoint.}

First, let us prove that $\bm c$ is a circuit when $T_1\cap T_2\neq \varnothing$. Then, there are two different nodes $u$ and $v$ such that $|\{e\in E\,:\, \bm{c}_e\neq 0,\, e\in \delta(u)\}|=|\{e\in E\,:\, \bm{c}_e\neq 0,\, e\in \delta(v)\}|=2$ and $\bm{c}_{uv}=0$. 

\begin{claim}\label{claim:tsp_neighborhood_equal}
Let $w$ be such that $|\{e\in E\,:\, \bm{c}_e\neq 0,\, e\in \delta(w)\}|=4$, and let the edges  $e,g\in\delta(w)$ be such that $\bm{c}_e=\bm{c}_g$. Then  $\bm{y}_e=\bm{y}_g$ holds.
\end{claim}
\begin{proof}
 For the values $\bm{c}_{uw}$ and $\bm{c}_{vw}$, we have (up to symmetry) four possibilities:
\begin{enumerate}[(i)]
	\item \label{case:tsp one minus one} $\bm{c}_{uw}=1$ and $\bm{c}_{vw}=-1$
	\item \label{case:tsp one one} $\bm{c}_{uw}=1$ and $\bm{c}_{vw}=1$
	\item \label{case:tsp zero one} $\bm{c}_{uw}=0$ and $\bm{c}_{vw}=1$
	\item \label{case:tsp zero zero}  $\bm{c}_{uw}=0$ and $\bm{c}_{vw}=0$\,.
\end{enumerate}

\bigskip
 \begin{figure}[!ht]
	\begin{center}
		\begin{subfigure}{.23\linewidth}
			\begin{tikzpicture}[scale=1.2]
				\tikzset{
					graph node/.style={shape=circle,draw=black,inner sep=0pt, minimum size=4pt
      							  }
						}
				\tikzset{	plus edge/.style={red, , line width=.6mm
      							  }
						}		
				\tikzset{	minus edge/.style={blue, line width=.8mm, dashed
      							  }
						}	
				\tikzset{	no edge/.style={very thin
      							  }
						}		
				\node[graph node, label=$v$] (v) at (-1,0){};
				\node[graph node, label=$u$] (u) at (1,0){};
				\node[graph node, label=$w$] (w) at (0,1){};
				\node[graph node, label={$u'=w'$}] (u') at (2,1){};

				\draw[plus edge] (u)--(w);
				\draw[minus edge] (v)--(w);
				\draw[plus edge] (w)--(u');
				\draw[minus edge] (u)--(u');
				\draw[no edge] (u)--(v);
			\end{tikzpicture}
			\subcaption{}
			\label{case:tsp one minus one a}
		\end{subfigure}
		\begin{subfigure}{.23\linewidth}
			\begin{tikzpicture}[scale=1.2]	
				\tikzset{
					graph node/.style={shape=circle,draw=black,inner sep=0pt, minimum size=4pt
      							  }
						}
				\tikzset{	plus edge/.style={red, , line width=.6mm
      							  }
						}		
				\tikzset{	minus edge/.style={blue, line width=.8mm, dashed
	      						  }
						}	
				\tikzset{	no edge/.style={very thin
	      						  }
						}		
				\node[graph node, label=$v$] (v) at (-1,0){};
				\node[graph node, label=$u$] (u) at (1,0){};
				\node[graph node, label=$w$] (w) at (0,1){};
				\node[graph node, label=$w'$] (w') at (2,1){};
				\node[graph node, label=$u'$] (u') at (3,0){};
	
				\draw[plus edge] (u)--(w);
				\draw[minus edge] (v)--(w);
				\draw[plus edge] (w)--(w');
				\draw[minus edge] (u)--(u');
				\draw[no edge] (u)--(v);
			\end{tikzpicture}
			\subcaption{}
			\label{case:tsp one minus one b}
		\end{subfigure}
	\end{center}
	\caption[Case 1~\eqref{case:tsp one minus one} of Lemma~\ref{lem:tsp3}]{Case 1~\eqref{case:tsp one minus one}. The vector $\bm c$ has value $-1$ for blue dashed edges, $1$ for red thick edges and $0$ for thin edges. (The value of not depicted edges is not relevant for the proof.) }
	\label{fig:one minus one case}
\end{figure}

\emph{Case \eqref{case:tsp one minus one}.}
Let $u'$ be the node such that $\bm{c}_{uu'}=-1$; and $w'$ be the node such that $\bm{c}_{ww'}=1$ and $u\neq w'$. There are two possible cases: $u'=w'$ (see Figure~\ref{case:tsp one minus one a}) and $u'\neq w'$ (see Figure~\ref{case:tsp one minus one b}). In the first case (see Figure~\ref{case:tsp one minus one a}), the statement of the Claim follows by considering $\bm y(\delta(u))$, $\bm y(\delta(w))$, $\bm y(E[\{u,v,w\}])$ and $\bm y_{ww'}+\bm y_{uw'}+ \bm y_{wu}+\bm y_{wv}+\bm y_{ut}+\bm y_{w's}$, where $\bm c_{ut}=0$, $\bm c_{w's}=0$, $s\neq t$ and $s,t$ are different from $u,v,w,w'$. (Note that such $s$, $t$ exist since there are at least $3$ nodes in $K_n$ different from $u,v,w,w'$, because $n\geq 7$. For at most $2$ nodes $r$ of these $3$ nodes, we have $\bm c_{w'r}\neq 0$. For every node $r$ of these $3$ nodes, we have $\bm c_{ur}=0$.)

In the second case (see Figure~\ref{case:tsp one minus one b}), the statement of the Claim follows by considering $\bm y(\delta(u))$, $\bm y(\delta(w))$, $\bm y(E[\{u,v,w\}])$ and $\bm y_{wu}+\bm y_{uv}+ \bm y_{wv}+\bm y_{ww'}+\bm y_{uu'}+\bm y_{vs}$, where $\bm c_{vs}=0$ and $s$ is different from $u,v,w,u',w'$. (Note that such $s$ exists since there are at least $2$ nodes in $K_n$ different from $u,v,w,u',w'$, because $n\geq 7$. For at most $1$ node $r$ of these $2$ nodes, we have $\bm c_{vr}\neq 0$.)

 \begin{figure}[!ht]
	\begin{center}
		\begin{subfigure}{.23\linewidth}
			\begin{tikzpicture}[scale=1.2]
				\tikzset{
					graph node/.style={shape=circle,draw=black,inner sep=0pt, minimum size=4pt
      							  }
						}
				\tikzset{	plus edge/.style={red, , line width=.6mm
      							  }
						}		
				\tikzset{	minus edge/.style={blue, line width=.8mm, dashed
      							  }
						}	
				\tikzset{	no edge/.style={very thin
      							  }
						}		
				\node[graph node, label=$v$] (v) at (-1,0){};
				\node[graph node, label=$u$] (u) at (1,0){};
				\node[graph node, label=$w$] (w) at (0,1){};
				\node[graph node, label={$u'=w'$}] (u') at (2,1){};

				\draw[plus edge] (u)--(w);
				\draw[plus edge] (v)--(w);
				\draw[minus edge] (w)--(u');
				\draw[minus edge] (u)--(u');
				\draw[no edge] (u)--(v);
				\draw[no edge] (u')--(v);
			\end{tikzpicture}
			\subcaption{}
			\label{case:tsp one one a}
		\end{subfigure}
		\begin{subfigure}{.23\linewidth}
			\begin{tikzpicture}[scale=1.2]	
				\tikzset{
					graph node/.style={shape=circle,draw=black,inner sep=0pt, minimum size=4pt
      							  }
						}
				\tikzset{	plus edge/.style={red, , line width=.6mm
      							  }
						}		
				\tikzset{	minus edge/.style={blue, line width=.8mm, dashed
	      						  }
						}	
				\tikzset{	no edge/.style={very thin
	      						  }
						}		
				\node[graph node, label=$v$] (v) at (-1,0){};
				\node[graph node, label=$u$] (u) at (1,0){};
				\node[graph node, label=$w$] (w) at (0,1){};
				\node[graph node, label=$w'$] (w') at (2,1){};
				\node[graph node, label=$u'$] (u') at (3,0){};
	
				\draw[plus edge] (u)--(w);
				\draw[plus edge] (v)--(w);
				\draw[minus edge] (w)--(w');
				\draw[minus edge] (u)--(u');
				\draw[no edge] (u)--(v);
				\draw[no edge] (u)--(w');
			\end{tikzpicture}
			\subcaption{}
			\label{case:tsp one one b}
		\end{subfigure}
	\end{center}
	\caption[Case 1~\eqref{case:tsp one one} of Lemma~\ref{lem:tsp3}]{Case 1~\eqref{case:tsp one one}. The vector $\bm c$ has value $-1$ for blue dashed edges, $1$ for red thick edges and $0$ for thin edges. (The value of not depicted edges is not relevant for the proof.) }
	\label{fig:one one case}
\end{figure}
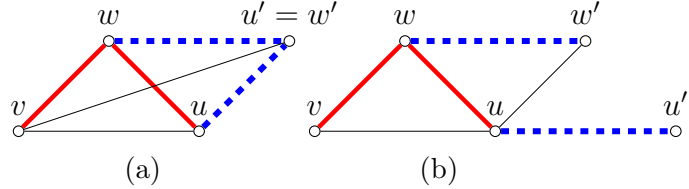

\smallskip
\emph{Case \eqref{case:tsp one one}.}
Let $u'$ be the node such that $\bm{c}_{uu'}=-1$; and $w'$ be a node such that $\bm{c}_{ww'}=-1$. There are two possible cases: $u'=w'$ (see Figure~\ref{case:tsp one one a}) and $u'\neq w'$ (see Figure~\ref{case:tsp one one b}). In the first case (see Figure~\ref{case:tsp one one a}), the statement of the Claim follows by considering $\bm y(\delta(u))$, $\bm y(\delta(v))$, $\bm y(\delta(w))$, $\bm y(E[\{v,w,w'\}])$ and $\bm y_{wu}+\bm y_{uv}+ \bm y_{vw}+\bm y_{ww'}+\bm y_{ut}+\bm y_{vs}$, where $\bm c_{ut}=0$, $\bm c_{vs}=-1$, $s\neq t$ and $s,t$ are different from $u,v,w,w'$. (Note that such $s$, $t$ trivially exist. The node $s$ is uniquely defined, and for  every node $t$ different from $u,v,w,w',s$ we have  $\bm c_{ut}=0$.)

In the second case (see Figure~\ref{case:tsp one one b}), the statement of the Claim follows by considering $\bm y(\delta(u))$, $\bm y(\delta(w))$, $\bm y(E[\{u,w,w'\}])$ and $\bm y_{wu}+\bm y_{uv}+ \bm y_{wv}+\bm y_{ww'}+\bm y_{uu'}+\bm y_{vs}$, where $\bm c_{vs}=0$ and $s$ is different from $u,v,w,u',w'$. (Note that such $s$ exists since there are at least $2$ nodes in $K_n$ different from $u,v,w,u',w'$, because $n\geq 7$. For at most $1$ node $r$ of these $2$ nodes, we have $\bm c_{vr}\neq 0$.)

 \begin{figure}[!ht]
	\begin{center}
			\begin{tikzpicture}[scale=1.2]
				\tikzset{
					graph node/.style={shape=circle,draw=black,inner sep=0pt, minimum size=4pt
      							  }
						}
				\tikzset{	plus edge/.style={red, , line width=.6mm
      							  }
						}		
				\tikzset{	minus edge/.style={blue, line width=.8mm, dashed
      							  }
						}	
				\tikzset{	no edge/.style={very thin
      							  }
						}		
				\node[graph node, label=$v$] (v) at (-1,0){};
				\node[graph node, label=$u$] (u) at (1,0){};
				\node[graph node, label={below: $w$}] (w) at (0,1){};
				\node[graph node, label={$w'$}] (w') at (1.5,1.5){};
				\node[graph node, label={$w''$}] (w'') at (-1.5,1.5){};

				\draw[no edge] (u)--(w);
				\draw[plus edge] (v)--(w);
				\draw[minus edge] (w)--(w'');
				\draw[minus edge] (w)--(w');
				\draw[no edge] (u)--(v);
			\end{tikzpicture}
	\end{center}
	\caption[Case 1~\eqref{case:tsp zero one} of Lemma~\ref{lem:tsp3}]{Case 1~\eqref{case:tsp zero one}. The vector $\bm c$ has value $-1$ for blue dashed edges, $1$ for red thick edges and $0$ for thin edges. (The value of not depicted edges is not relevant for the proof.) }
	\label{fig:zero one case}
\end{figure}

\smallskip
\emph{Case \eqref{case:tsp zero one}}
Let $w'$, $w''$ be two different nodes such that $\bm{c}_{ww'}=-1$ and $\bm{c}_{ww''}=-1$(see Figure~\ref{fig:zero one case}). The statement of the Claim follows by considering $\bm y(\delta(w))$ and $\bm y_{wu}+\bm y_{uv}+ \bm y_{vw}+\bm y_{w\bar{w}}+\bm y_{ut}+\bm y_{vs}$ for each $\bar{w} \in \{w',w''\}$, where $\bm c_{ut}=0$, $\bm c_{vs}=0$, $s\neq t$ and $s,t$ are different from $u,v,w,\bar{w}$. (Note that such $s$ and $t$ exist. Indeed, there are at least $3$ nodes in $K_n$ different from $u,v,w,\bar{w}$, because $n\geq 7$. For at most $2$ nodes $r$ of these $3$ nodes, we have $\bm c_{ur}\neq 0$.  For at most $1$ node $r$ of these $3$ nodes, we have $\bm c_{vr}\neq 0$. )

 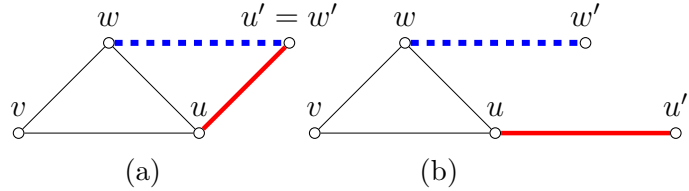
\begin{figure}[!ht]
	\begin{center}
		\begin{subfigure}{.23\linewidth}
			\begin{tikzpicture}[scale=1.2]
				\tikzset{
					graph node/.style={shape=circle,draw=black,inner sep=0pt, minimum size=4pt
      							  }
						}
				\tikzset{	plus edge/.style={red, , line width=.6mm
      							  }
						}		
				\tikzset{	minus edge/.style={blue, line width=.8mm, dashed
      							  }
						}	
				\tikzset{	no edge/.style={very thin
      							  }
						}		
				\node[graph node, label=$v$] (v) at (-1,0){};
				\node[graph node, label=$u$] (u) at (1,0){};
				\node[graph node, label=$w$] (w) at (0,1){};
				\node[graph node, label={$u'=w'$}] (u') at (2,1){};

				\draw[no edge] (u)--(w);
				\draw[no edge] (v)--(w);
				\draw[minus edge] (w)--(u');
				\draw[plus edge] (u)--(u');
				\draw[no edge] (u)--(v);
			\end{tikzpicture}
			\subcaption{}
			\label{case:tsp zero zero a}
		\end{subfigure}
		\begin{subfigure}{.23\linewidth}
			\begin{tikzpicture}[scale=1.2]	
				\tikzset{
					graph node/.style={shape=circle,draw=black,inner sep=0pt, minimum size=4pt
      							  }
						}
				\tikzset{	plus edge/.style={red, , line width=.6mm
      							  }
						}		
				\tikzset{	minus edge/.style={blue, line width=.8mm, dashed
	      						  }
						}	
				\tikzset{	no edge/.style={very thin
	      						  }
						}		
				\node[graph node, label=$v$] (v) at (-1,0){};
				\node[graph node, label=$u$] (u) at (1,0){};
				\node[graph node, label=$w$] (w) at (0,1){};
				\node[graph node, label=$w'$] (w') at (2,1){};
				\node[graph node, label=$u'$] (u') at (3,0){};
	
				\draw[no edge] (u)--(w);
				\draw[no edge] (v)--(w);
				\draw[minus edge] (w)--(w');
				\draw[plus edge] (u)--(u');
				\draw[no edge] (u)--(v);
			\end{tikzpicture}
			\subcaption{}
			\label{case:tsp zero zero b}
		\end{subfigure}
	\end{center}
	\caption[Case 1~\eqref{case:tsp zero zero} of Lemma~\ref{lem:tsp3}]{Case 1~\eqref{case:tsp zero zero}. The vector $\bm c$ has value $-1$ for blue dashed edges, $1$ for red thick edges and $0$ for thin edges. (The value of not depicted edges is not relevant for the proof.) }
	\label{fig:zero zero case}
\end{figure}
\smallskip
\emph{Case \eqref{case:tsp zero zero}}. Consider a node $w'$ and a node $u'$ such that $\bm{c}_{ww'}=-\bm{c}_{uu'}$. To prove the Claim, it is enough to show that $\bm{y}_{ww'}=-\bm{y}_{uu'}$.

There are two possible cases: $u'=w'$ (see Figure~\ref{case:tsp zero zero a}) and $u'\neq w'$ (see Figure~\ref{case:tsp zero zero b}). In Figure~\ref{fig:zero zero case}, without loss of generality we assumed that $\bm{c}_{ww'}=-1$ and $\bm{c}_{uu'}=1$.) In the first case (see Figure~\ref{case:tsp zero zero a}), we can consider $\bm y (E[\{w,u,u'\}])$ to establish $\bm{y}_{ww'}=-\bm{y}_{uu'}$. 

In the second case (see Figure~\ref{case:tsp zero zero b}), to establish $\bm{y}_{ww'}=-\bm{y}_{uu'}$ we can consider $\bm y_{wu}+\bm y_{uv}+ \bm y_{vw}+\bm y_{ww'}+\bm y_{uu'}+\bm y_{vs}$ where $\bm c_{vs}=0$ and $s$ is different from $u,v,w,u',w'$. Such $s$ exists unless $n=7$ and we have the situations in Figure~\ref{fig:zero zero case special}. (Note that otherwise such $s$ exists. Indeed, there are at least $3$ nodes in $K_n$ different from $u,v,w,u',w'$, if $n\geq 8$. For at most $2$ nodes $r$ of these $3$ nodes we have $\bm c_{vr}\neq 0$.)

Now in the case in Figure~\ref{fig:zero zero case special} and $n=7$, it is straightforward to establish that there are at least two nodes $r$ such that  $|\{e\in E\,:\, \bm{c}_e\neq 0,\, e\in \delta(r)\}|=4$. Moreover, if $|\{e\in E\,:\, \bm{c}_e\neq 0,\, e\in \delta(w')\}|=4$ then there are at least four nodes $r$ such that  $|\{e\in E\,:\, \bm{c}_e\neq 0,\, e\in \delta(r)\}|=4$. Now it is not difficult to use already considered cases \eqref{case:tsp one minus one}, \eqref{case:tsp one one}, \eqref{case:tsp zero one}, to establish $\bm{y}_{ww'}=-\bm{y}_{uu'}$.

 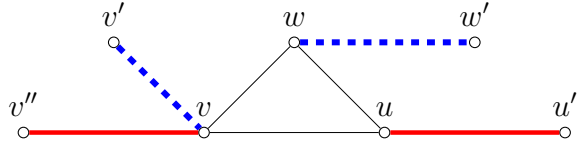
\begin{figure}[!ht]
	\begin{center}
			\begin{tikzpicture}[scale=1.2]	
				\tikzset{
					graph node/.style={shape=circle,draw=black,inner sep=0pt, minimum size=4pt
      							  }
						}
				\tikzset{	plus edge/.style={red, , line width=.6mm
      							  }
						}		
				\tikzset{	minus edge/.style={blue, line width=.8mm, dashed
	      						  }
						}	
				\tikzset{	no edge/.style={very thin
	      						  }
						}		
				\node[graph node, label=$v$] (v) at (-1,0){};
				\node[graph node, label=$v'$] (v') at (-2,1){};
				\node[graph node, label=$v''$] (v'') at (-3,0){};
				\node[graph node, label=$u$] (u) at (1,0){};
				\node[graph node, label=$w$] (w) at (0,1){};
				\node[graph node, label=$w'$] (w') at (2,1){};
				\node[graph node, label=$u'$] (u') at (3,0){};

				\draw[no edge] (u)--(w);
				\draw[no edge] (v)--(w);
				\draw[minus edge] (w)--(w');
				\draw[plus edge] (u)--(u');
				\draw[no edge] (u)--(v);
				\draw[minus edge] (v')--(v);
				\draw[plus edge] (v'')--(v);
			\end{tikzpicture}
	\end{center}
	\caption[Case 1~\eqref{case:tsp zero zero} of Lemma~\ref{lem:tsp3} (Special Case)]{Case 1~\eqref{case:tsp zero zero} (Special Case). The vector $\bm c$ has value $-1$ for blue dashed edges, $1$ for red thick edges and $0$ for thin edges. (The value of not depicted edges is not relevant for the proof.) }
	\label{fig:zero zero case special}
\end{figure}
\end{proof}

Using the above Claim for all nodes of degree 4 in a same connected component $C$ of $T_1\triangle T_2$, we establish that $\bm{y}_e=\bm{y}_g$ whenever $\bm{c}_e=\bm{c}_g$ and $e,g$ are both in $C$. On the other side, we have $\bm{y}(\delta(v))=0$ for all nodes $v$. Hence, we also have $\bm{y}_e=-\bm{y}_g$ whenever $\bm{c}_e=-\bm{c}_g$ and $e,g$ are both in $C$. 

Moreover, $\bm{y}_e=-\bm{y}_g$ holds for all edges $e$, $g$ such that $\bm{c}_e=-\bm{c}_g$. Indeed, let  $e=vv'$ and $g=uu'$ be two edges from different connected components of $T_1\triangle T_2$ such that $\bm{c}_e=-\bm{c}_g$. Consider the constraint $x(E[\{v,v',u,u'\}])\leq 3$ from~\eqref{eq:tsp_description_5}. Since $\bm{c}(E[\{v,v',u,u'\}])=0$, we have $\bm{y}(E[\{v,v',u,u'\}])=\bm{y}_e+\bm{y}_g=0$, implying $\bm{y}_e=-\bm{y}_g$. 

Hence, for $n\ge 7$ we proved that $\chi(T_1)-\chi(T_2)$ is a circuit whenever $T_1\cap T_2$ is not empty.

\bigskip

\emph{Case 2: $T_1$ and $T_2$ are disjoint.}
Let us prove that for $n\ge 7$, $\chi(T_1)-\chi(T_2)$  is a circuit whenever $T_1\cap T_2=\varnothing$.

For $n=7$ we have (up to symmetry) three possibilities for two different Hamiltonian cycles $T_1$ and $T_2$ without a common edge (see Figure~\ref{fig:tsp7}). In all these cases $\chi(T_1)-\chi(T_2)$ is a circuit.
 \begin{figure}
	\begin{center}
		\begin{tikzpicture}
	
			\tikzset{
				graph node/.style={shape=circle,draw=black,inner sep=0pt, minimum size=4pt
      						  }
					}
			\tikzset{	plus edge/.style={red, thick
      						  }
					}		
			\tikzset{			minus edge/.style={blue, line width=.8mm, dashed
      						  }
					}		
			\def\rad{2}	
				\foreach \i in {1,...,7}{
					\node[graph node] (v\i) at ({\rad*cos((2*\i)*360/14+90)},{\rad*sin((2*\i)*360/14+90)}){};}

			\draw[plus edge] (v1)--(v2)--(v3)--(v4)--(v5)--(v6)--(v7)--(v1);
			\draw[minus edge] (v1)--(v3)--(v5)--(v7)--(v2)--(v4)--(v6)--(v1);
		\end{tikzpicture}
				\begin{tikzpicture}
	
			\tikzset{
				graph node/.style={shape=circle,draw=black,inner sep=0pt, minimum size=4pt
      						  }
					}
			\tikzset{	plus edge/.style={red, thick
      						  }
					}		
			\tikzset{			minus edge/.style={blue, line width=.8mm, dashed
      						  }
					}		
			\def\rad{2}	
				\foreach \i in {1,...,7}{
					\node[graph node] (v\i) at ({\rad*cos((2*\i)*360/14+90)},{\rad*sin((2*\i)*360/14+90)}){};}

			\draw[plus edge] (v1)--(v2)--(v3)--(v4)--(v5)--(v6)--(v7)--(v1);
			\draw[minus edge] (v1)--(v3)--(v5)--(v7)--(v2)--(v6)--(v4)--(v1);
		\end{tikzpicture}
		\begin{tikzpicture}
	
			\tikzset{
				graph node/.style={shape=circle,draw=black,inner sep=0pt, minimum size=4pt
      						  }
					}
			\tikzset{	plus edge/.style={red, thick
      						  }
					}		
			\tikzset{			minus edge/.style={blue, line width=.8mm, dashed
      						  }
					}		
			\def\rad{2}	
				\foreach \i in {1,...,7}{
					\node[graph node] (v\i) at ({\rad*cos((2*\i)*360/14+90)}, 	{\rad*sin((2*\i)*360/14+90)}){};}

			\draw[plus edge] (v1)--(v2)--(v3)--(v4)--(v5)--(v6)--(v7)--(v1);
			\draw[minus edge] (v7)--(v3)--(v6)--(v1)--(v4)--(v2)--(v5)--(v7);
		\end{tikzpicture}
	\end{center}
	\caption[All possible cases for two different Hamiltonian cycles in $K_7$ without a common edge]{All possible cases (up to symmetry) for two different Hamiltonian cycles $T_1$ and $T_2$ in $K_7$ without a common edge. Here, the edges of $T_2$ are depicted as dashed edges.}
	\label{fig:tsp7}
\end{figure}
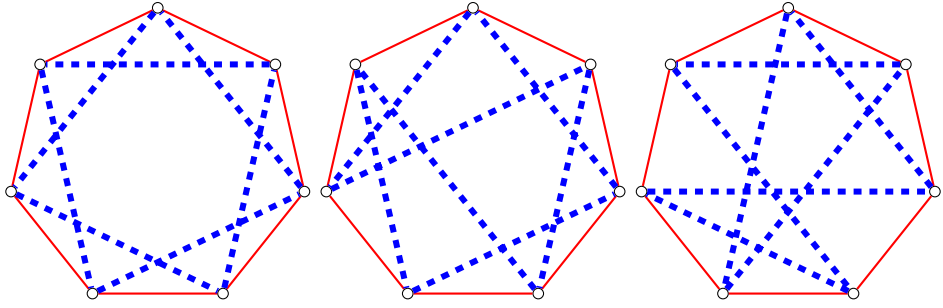

For $n\geq 8$ let us show the following Claim.
\begin{claim}\label{claim:tsp_edges_equal}
Let $w$ be a node and $e,g\in\delta(w)$ be such that $\bm{c}_e=\bm{c}_g$. Then  $\bm{y}_e=\bm{y}_g$ holds.
\end{claim}
\begin{proof}
Let $e$, $g$ be $wv$, $wu$ for some two nodes $u$, $v$. We may assume that $u$ and $v$ are different, since otherwise the statement of the Claim is trivial. 

Without loss of generality, we may assume $\bm{c}_e=1$ and $\bm{c}_g=1$. There are two possible cases
\begin{enumerate}[(a)]
	\item \label{case:tsp one one zero} $\bm{c}_{uv}=-1$
	\item \label{case:tsp one one one} $\bm{c}_{uv}=0$.
\end{enumerate}

In the case~\eqref{case:tsp one one zero}, let $w'$, $w''$ be two different nodes such that $\bm{c}_{ww'}=-1$ and $\bm{c}_{ww''}=-1$. For each $\bar{w}\in \{w',w''\}$, to establish $\bm{y}_{w\bar{w}}=\bm{y}_{uv}$ consider $\bm{y}_{wu}+\bm{y}_{uv}+\bm{y}_{wv}+\bm{y}_{w\bar{w}}+\bm{y}_{vs}+\bm{y}_{ut}$, where $s$, $t$, $s\neq t$ are two nodes different from $u$, $v$, $w$, $\bar w$ such that $\bm{c}_{vs}=0$ and $\bm{c}_{ut}=0$. (Note that such nodes $s$, $t$ exist. Indeed, since $n\geq 8$ there are at least $4$ nodes in $K_n$ different from $u$, $v$, $w$, $\bar w$. There are at most $2$ nodes $r$ of these $4$ nodes such that $\bm{c}_{vr}\neq 0$. Also there are at most $2$ nodes $r$ of these $4$ nodes such that $\bm{c}_{ur}\neq 0$.) To establish $\bm{y}_e=\bm{y}_g$, now it is enough to consider $\bm{y}(E[\{s,w^\star,w\}])$ and $\bm{y}(\delta(w))$, where $s\in \{u,v\}$, $w^\star\in \{w',w''\}$ such that $\bm{c}_{sw^\star}=0$. (Note that such nodes $s$, $w^\star$ exist, since otherwise $T_2$ has a subtour.).

In the case~\eqref{case:tsp one one one}, let $u'$, $u''$ be two different nodes such that $\bm{c}_{uu'}=\bm{c}_{uu''}=-1$, and let $v'$, $v''$ be two different nodes such that $\bm{c}_{vv'}=\bm{c}_{vv''}=-1$ (see Figure~\eqref{fig:one one one case}). First note that $\{u',u''\}\neq\{v',v''\}$ as otherwise $T_2$ contains a subtour.  Then we may assume that $v'\notin\{u',u''\}$ and $u'\notin\{v',v''\}$. (Note that $v''$ could be equal to $u''$).

It follows that $\bm y_{uu'}=\bm y_{uu''}$ by considering $\bm{y}_{wu}+\bm{y}_{uv}+\bm{y}_{vw}+\bm{y}_{vv'}+\bm{y}_{u\bar{u}}+\bm{y}_{wz}$ for each $\bar{u}\in\{u',u''\}$, where $\bm c_{wz}=0$ and $z$ is different from $u,v,w,v',$ and $\bar{u}$.  (Note that such a $z$ exists.  Indeed there are at least $3$ nodes in $K_n$ different from $u,v,w,v'$ and $\bar{u}$ if $n\geq8$.  For at most $2$ nodes $r$ of these $3$ nodes, we have $\bm c_{wr}\neq0$.).  By symmetry, we also have that $\bm y_{vv'}=\bm y_{vv''}$.

There exists $\bar{u}\in\{u',u''\}$ such that $\bm c_{w\bar{u}}=0$ as otherwise $T_2$ contains a subtour.  Then it follows that $\bm y_{uw}=-\bm y_{u\bar{u}}$ by considering $\bm y(E[\{w,u,\bar u\}])$.  Therefore, $\bm y_{uw}=-\bm y_{uu'}$ and $\bm y_{uw}=-\bm y_{uu''}$.  Similarly, $\bm y_{vw}=-\bm y_{vv'}$ and $\bm y_{vw}=-\bm y_{vv''}$.

 \begin{figure}[!ht]
	\begin{center}
			\begin{tikzpicture}[scale=1.2]	
				\tikzset{
					graph node/.style={shape=circle,draw=black,inner sep=0pt, minimum size=4pt
      							  }
						}
				\tikzset{	plus edge/.style={red, , line width=.6mm
      							  }
						}		
				\tikzset{	minus edge/.style={blue, line width=.8mm, dashed
	      						  }
						}	
				\tikzset{	no edge/.style={very thin
	      						  }
						}		
				\node[graph node, label=$v$] (v) at (-1,0){};
				\node[graph node, label=$v'$] (v') at (-2,1){};
				\node[graph node, label=$v''$] (v'') at (-3,0){};
				\node[graph node, label=$u$] (u) at (1,0){};
				\node[graph node, label=$w$] (w) at (0,1){};
				\node[graph node, label=$u'$] (u') at (2,1){};
				\node[graph node, label=$u''$] (u'') at (3,0){};

				\draw[plus edge] (u)--(w);
				\draw[plus edge] (v)--(w);
				\draw[no edge] (u)--(v);
				\draw[minus edge] (u)--(u');
				\draw[minus edge] (u)--(u'');
				\draw[minus edge] (v)--(v');
				\draw[minus edge] (v)--(v'');
			\end{tikzpicture}
	\end{center}
	\caption[Case 2~\eqref{case:tsp one one one} of Lemma~\ref{lem:tsp3}]{Case 2~\eqref{case:tsp one one one} of Lemma~\ref{lem:tsp3}. The vector $\bm c$ has value $-1$ for blue dashed edges, $1$ for red thick edges and $0$ for thin edges. (The value of not depicted edges is not relevant for the proof.) }
	\label{fig:one one one case}
\end{figure}
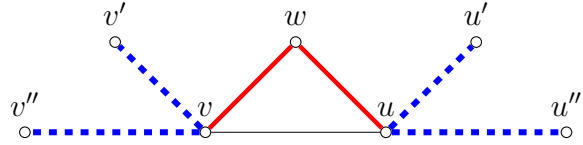

Now, if $\bm c_{vu'}\neq0$, then since $u'\notin\{v',v''\}$, we have that $\bm c_{vu'}=1$.  Then it follows that $\bm y_{vu'}=\bm y_{vw}$ by considering $\bm y(\delta(v))$.  It follows that $\bm y_{vu'}=-\bm y_{uu'}$ by considering $\bm y(E[\{u,u',v\}])$.  Then in this case we have that
$$
\bm y_{uw}=-\bm y_{uu'}=\bm y_{vu'}=\bm y_{vw},
$$
and therefore $\bm y_g=\bm y_e$, as desired.

Otherwise, $\bm c_{vu'}=0$, and by symmetry we may assume that $\bm c_{uv'}=0$ as well. There exists a node $v'''\neq u'$ such that $\bm c_{vv'''}=1$.  It follows that $\bm y_{vv'''}=\bm y_{vw}$ by considering $\bm y(\delta(v))$.

 \begin{figure}[!ht]
	\begin{center}
			\begin{tikzpicture}[scale=1.2]	
				\tikzset{
					graph node/.style={shape=circle,draw=black,inner sep=0pt, minimum size=4pt
      							  }
						}
				\tikzset{	plus edge/.style={red, , line width=.6mm
      							  }
						}		
				\tikzset{	minus edge/.style={blue, line width=.8mm, dashed
	      						  }
						}	
				\tikzset{	no edge/.style={very thin
	      						  }
						}		
				\node[graph node, label=$v$] (v) at (-1,0){};
				\node[graph node, label=$v'$] (v') at (-2,1){};
				\node[graph node, label=$v'''$] (v''') at (-3,0){};
				\node[graph node, label=$u$] (u) at (1,0){};
				\node[graph node, label=$w$] (w) at (0,1){};
				\node[graph node, label=$u'$] (u') at (2,1){};

				\draw[plus edge] (u)--(w);
				\draw[plus edge] (v)--(w);
				\draw[no edge] (u)--(v);
				\draw[minus edge] (u)--(u');
				\draw[plus edge] (v)--(v''');
				\draw[minus edge] (v)--(v');
				\draw[no edge] (v)--(u');
				\draw[no edge] (u)--(v');
				\draw[no edge] (w)--(u');
			\end{tikzpicture}
	\end{center}
	\caption[Case 2~\eqref{case:tsp one one one} of Lemma~\ref{lem:tsp3}: First sub-case]{Case 2~\eqref{case:tsp one one one} of Lemma~\ref{lem:tsp3}: First sub-case. The vector $\bm c$ has value $-1$ for blue dashed edges, $1$ for red thick edges and $0$ for thin edges. (The value of not depicted edges is not relevant for the proof.) }
	\label{fig:one one one case A}
\end{figure}
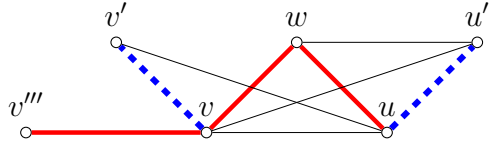

If $\bm c_{wu'}=0$  (see Figure~\eqref{fig:one one one case A}), then it follows that $\bm y_{vv'''}=-\bm y_{uu'}$ by considering $\bm{y}_{uu'}+\bm{y}_{u'v}+\bm{y}_{vu}+\bm{y}_{vv'''}+\bm{y}_{u'w}+\bm{y}_{uv'}$.  Then in this case we have that
$$
\bm y_{uw}=-\bm y_{uu'}=\bm y_{vv'''}=\bm y_{vw},
$$
and therefore $\bm y_g=\bm y_e$, as desired.

Otherwise, $\bm c_{wu'}=-1$.  Then $\bm c_{wu''}=0$, as otherwise $T_2$ contains a subtour.

 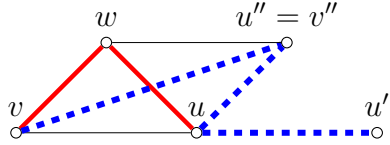
\begin{figure}[!ht]
	\begin{center}
			\begin{tikzpicture}[scale=1.2]	
				\tikzset{
					graph node/.style={shape=circle,draw=black,inner sep=0pt, minimum size=4pt
      							  }
						}
				\tikzset{	plus edge/.style={red, , line width=.6mm
      							  }
						}		
				\tikzset{	minus edge/.style={blue, line width=.8mm, dashed
	      						  }
						}	
				\tikzset{	no edge/.style={very thin
	      						  }
						}		
				\node[graph node, label=$v$] (v) at (-1,0){};
				\node[graph node, label={$u''=v''$}] (u'') at (2,1){};
				\node[graph node, label=$u$] (u) at (1,0){};
				\node[graph node, label=$w$] (w) at (0,1){};
				\node[graph node, label=$u'$] (u') at (3,0){};

				\draw[plus edge] (u)--(w);
				\draw[plus edge] (v)--(w);
				\draw[no edge] (u)--(v);
				\draw[minus edge] (u)--(u');
				\draw[minus edge] (u)--(u'');
				\draw[minus edge] (v)--(u'');
				\draw[no edge] (w)--(u'');
			\end{tikzpicture}
	\end{center}
	\caption[Case 2~\eqref{case:tsp one one one} of Lemma~\ref{lem:tsp3}: Second sub-case]{Case 2~\eqref{case:tsp one one one} of Lemma~\ref{lem:tsp3}: Second sub-case. The vector $\bm c$ has value $-1$ for blue dashed edges, $1$ for red thick edges and $0$ for thin edges. (The value of not depicted edges is not relevant for the proof.) }
	\label{fig:one one one case B}
\end{figure}

If $v''=u''$ (see Figure~\eqref{fig:one one one case B}), it follows that $\bm y_{vw}=-\bm y_{uu'}$ by considering  $\bm{y}_{uu''}+\bm{y}_{u''w}+\bm{y}_{wu}+\bm{y}_{uu'}+\bm{y}_{wv}+\bm{y}_{u''z}$, where $\bm c_{u''z}=0$ and $z$ is different from $u,u'',w,u',$ and $v$. (Note that such a $z$ exists.  Indeed there are at least $3$ nodes in $K_n$ different from $u,u'',w,u',$ and $v$ if $n\geq8$.  For at most $2$ nodes $r$ of these $3$ nodes, we have $\bm c_{u''r}\neq0$.).  Then in this case we have that
$$
\bm y_{uw}=-\bm y_{uu'}=\bm y_{vw}
$$
and therefore, $\bm y_g=\bm y_e$, as desired.

 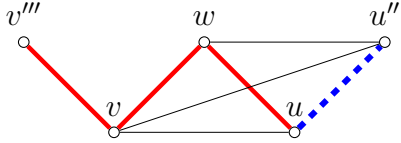
\begin{figure}[!ht]
	\begin{center}
			\begin{tikzpicture}[scale=1.2]	
				\tikzset{
					graph node/.style={shape=circle,draw=black,inner sep=0pt, minimum size=4pt
      							  }
						}
				\tikzset{	plus edge/.style={red, , line width=.6mm
      							  }
						}		
				\tikzset{	minus edge/.style={blue, line width=.8mm, dashed
	      						  }
						}	
				\tikzset{	no edge/.style={very thin
	      						  }
						}		
				\node[graph node, label=$v$] (v) at (-1,0){};
				\node[graph node, label=$u''$] (u'') at (2,1){};
				\node[graph node, label=$u$] (u) at (1,0){};
				\node[graph node, label=$w$] (w) at (0,1){};
				\node[graph node, label=:$v'''$] (v''') at (-2,1){};

				\draw[plus edge] (u)--(w);
				\draw[plus edge] (v)--(w);
				\draw[no edge] (u)--(v);
				\draw[minus edge] (u)--(u'');
				\draw[plus edge] (v)--(v''');
				\draw[no edge] (v)--(u'');
				\draw[no edge] (w)--(u'');
			\end{tikzpicture}
	\end{center}
	\caption[Case 2~\eqref{case:tsp one one one} of Lemma~\ref{lem:tsp3}: Third sub-case]{Case 2~\eqref{case:tsp one one one} of Lemma~\ref{lem:tsp3}: Third sub-case. The vector $\bm c$ has value $-1$ for blue dashed edges, $1$ for red thick edges and $0$ for thin edges. (The value of not depicted edges is not relevant for the proof.) }
	\label{fig:one one one case C}
\end{figure}

Otherwise, $v''\neq u''$. 

If $\bm c_{vu''}=0$ (see Figure~\eqref{fig:one one one case C}), it follows that $\bm y_{vv'''}=-\bm y_{uu''}$ by considering $\bm{y}_{u''v}+\bm{y}_{vu}+\bm{y}_{uu''}+\bm{y}_{u''w}+\bm{y}_{vv'''}+\bm y_{uz}$, where $\bm c_{uz}=0$ and $z$ is different from $u'',v,u,w,$ and $v'''$. (Note that such a $z$ exists.  Indeed there are at least $3$ nodes in $K_n$ different from $u'',v,u,w,$ and $v'''$ if $n\geq8$.  For at most $2$ nodes $r$ of these $3$ nodes, we have $\bm c_{ur}\neq0$.).  Then in this case we have that
$$
\bm y_{uw}=-\bm y_{uu''}=\bm y_{vv'''}=\bm y_{vw},
$$
and therefore, $\bm y_g=\bm y_e$, as desired.

Finally, if instead $\bm c_{vu''}=1$ (that is, $u''=v'''$), then it follows that $\bm y_{vu''}=-\bm y_{uu''}$ by considering $\bm y(E[\{u,v,u''\}])$.  Then in this case we have that 
$$
\bm y_{uw}=-\bm y_{uu''}=\bm y_{vu''}=\bm y_{vw},
$$
and therefore $\bm y_g=\bm y_e$, as desired.
\end{proof}

Together, claims~\ref{claim:tsp_edges_equal} and~\ref{claim:tsp_neighborhood_equal} implies that, up to scaling, $\bm y = \bm c$, a contradiction.  Thus, for $n\geq 7$ and for any two different Hamiltonian cycles $T_1$, $T_2$, we have that $\bm c =\chi(T_1)-\chi(T_2)$ is a circuit for the Traveling Salesman polytope. 
\end{proof}

\begin{proof}\emph{(Proof of Theorem \ref{thm:tsp})}
The cases $n=3$ and $n=4$ are trivial. Indeed, $P_{\ts}(3)$ and $P_{\ts}(4)$ are simplices, and thus every two vertices of $P_{\ts}(3)$ and $P_{\ts}(4)$ form an edge. The cases, $n=5$, $n=6$ and $n\geq 7$ are covered by Lemma~\ref{lem:tsp1}, Lemma~\ref{lem:tsp2} and Lemma~\ref{lem:tsp3},  respectively.
\end{proof}

\bigskip 

\section{Fractional Stable Set Polytope}\label{sec:fSTAB}

Given a connected graph $G=(V,E)$ with at least two nodes, the Fractional Stable Set polytope is defined as follows
$$
	P_{\fstab}(G):=\{ \bm{x}\in \R^V\,:\, \bm{x}_u+\bm{x}_v \le 1 \text{  for all  } uv\in E,\, \bm{x}\ge \bm{0} \}\,.
$$
The Fractional Stable Set polytope is a well studied polytope. In particular, it is known that all vertices of it are half-integral~\cite{B70} i.e., $\bm{x}\in \{0,1/2,1\}^V$ whenever $\bm{x}$ is a vertex of $P_{\fstab}(G)$. In~\cite{Michini2014}, it is shown that the combinatorial diameter of $P_{\fstab}(G)$ is bounded from above by the number of nodes in $G$. 

Before we study the circuit diameter of the Fractional Stable Set polytope let us study the circuits of this polytope.  Its circuits admit a nice characterization captured by the lemma below.

\begin{lem}
\label{lem:fstab circuits}
For a graph $G=(V,E)$, a vector $\bm{c}$, $\bm{c}\neq \bm{0}$ is a circuit of $P_{\fstab}(G)$ if and only if the graph $G'$ with the node set $V':=\{v\in V\,:\, \bm{c}_v\neq 0\}$ and the edge set $E':=\{e\in E\,:\, e=uv,\, u,v \in V'\text{  and  }\bm{c}_u+\bm{c}_v=0\}$ is connected.
\end{lem}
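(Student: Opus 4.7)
The plan is to apply Proposition~\ref{prop:system_defn} with $A$ vacuous and $B$ the matrix whose rows are the edge coefficients $\bm{y}_u + \bm{y}_v$ (one per $uv \in E$) and the non-negativity coefficients $-\bm{y}_v$ (one per $v \in V$). For a nonzero vector $\bm c$, writing $V' := \supp(\bm c)$ and $E'$ as in the statement, the rows of $B$ satisfied with equality at $\bm c$ are exactly the non-negativity rows for $v \notin V'$ and the edge rows for $uv \in E'$.

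For the \emph{if} direction, suppose $G' = (V', E')$ is connected, and let $B'$ be the submatrix of $B$ consisting of these tight rows. Any $\bm y$ with $B'\bm y = \bm 0$ must vanish outside $V'$ and must satisfy $\bm y_u = -\bm y_v$ along every edge of $E'$; walking along a spanning tree of $G'$ from an arbitrary base vertex shows that $\bm y$ is completely determined on $V'$ by its value at that base vertex, so the solution space has dimension at most one. Since $\bm c$ itself is a nonzero solution, it generates this space, and Proposition~\ref{prop:system_defn} yields that $\bm c$ is a circuit.

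For the \emph{only if} direction, assume $G'$ is disconnected, and construct a nonzero vector $\bm c'$ with $\supp(B\bm c') \subsetneq \supp(B\bm c)$, directly contradicting condition (ii) of the circuit definition. Pick any connected component $C_1$ of $G'$ with vertex set $V_1 \subsetneq V'$, and set $\bm c'_v := \bm c_v$ for $v \in V_1$ and $\bm c'_v := 0$ otherwise. The non-negativity block of $B$ contributes exactly $V_1$ to $\supp(B\bm c')$ and exactly $V'$ to $\supp(B\bm c)$, so the inclusion is already strict there.

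The main (mild) obstacle is then checking the edge-row block: for every edge $uv$, whenever $\bm c'_u + \bm c'_v \neq 0$ the same is true of $\bm c_u + \bm c_v$. The only delicate case is a cross edge $uv$ with $u \in V_1$ and $v \in V' \setminus V_1$, for which $uv \notin E'$ (otherwise $u$ and $v$ would lie in a common component of $G'$), so $\bm c_u + \bm c_v \neq 0$, ensuring the edge contributes to $\supp(B\bm c)$ as well. The sub-cases where both endpoints lie in $V_1$, both lie outside $V_1$, or the outside endpoint lies in $V \setminus V'$ are immediate, which completes the argument.
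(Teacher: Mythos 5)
Your proof is correct and takes essentially the same route as the paper: both directions rely on Proposition~\ref{prop:system_defn}, using the restriction of $\bm c$ to one component of $G'$ to witness a strict support drop in the disconnected case, and a uniqueness argument (via a spanning tree of $G'$) for the connected case. One tiny inaccuracy worth noting: the rows of $B$ tight at $\bm c$ also include edge rows $uv$ with \emph{both} endpoints outside $V'$, so your ``exactly'' is slightly off; however, those rows are implied by the non-negativity rows for $v\notin V'$, so omitting them from $B'$ leaves the kernel unchanged and the argument stands.
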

\begin{proof}
Let $G'$ be not connected and let $C$ be a connected component of $G'$ with a node set $U$. Let us define the vector $\bm{c}'\in \R^V$ as
$$
	\bm{c}'_v:=\begin{cases}
				\bm{c}_v&\text{  if  } v\in U\\
				0 &\text{otherwise}\,.
			\end{cases}
$$
The vector $\bm{c}$ is not a circuit of $P_{\fstab}(G)$ since the vector $D \bm{c}'$ has a smaller support than $D \bm{c}$, where $D$ is the linear constraint matrix in the minimal description of  $P_{\fstab}(G)$. 

On the other hand, it is straightforward to check that if $G'$ is connected, then $\bm{c}$ is a unique (up to scaling) non zero solution of the below system
	\begin{align*}
		&\bm{y}_v=0 \quad&&\text{  for all  } v\in V \text{  such that  }\bm{c}_v=0\\
		&\bm{y}_v+\bm{y}_u=0 \quad&&\text{  for all  } uv\in E \text{  such that  }\bm{c}_v+\bm{c}_u=0\,,
	\end{align*}
showing that $\bm{c}$ is a circuit of $P_{\fstab}(G)$.
\end{proof}

\bigskip
To study the circuit diameter of the Fractional Stable Set polytope we need the following notation. For a node $v$, let $B{(v,0)}$ be defined as $\{v\}$. For integer positive $k$, we define $B(v, k)$ to be the set of nodes which are at distance at most $k$ from $v$. The set of nodes which are at distance exactly $k$ from $v$ is denoted by $N(v,k)$ i.e., $N(v, k):=B(v, k)\setminus B(v, k-1)$. The eccentricity $\varepsilon(v)$ of a node $v\in V$ is minimum $k$ such that $V=B(v, k)$.

\begin{lem}\label{lem:stab ecce}
Let $v$ be any node in a connected graph $G=(V,E)$ with at least two nodes. Then $\cd (P_{\fstab}(G))$ is $\mathcal{O}(\varepsilon(v))$.
\end{lem}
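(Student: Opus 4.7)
The plan is to show that any two vertices $x^0, x^* \in P_{\fstab}(G)$ are at circuit distance $O(\varepsilon(v))$ by routing the walk through a canonical intermediate vertex whose support is contained in $\{v\}$. Write $D := \varepsilon(v)$ and decompose $V = N(v,0) \cup N(v,1) \cup \cdots \cup N(v,D)$ using the BFS layers from $v$. The workhorse throughout will be the circuit characterization from Lemma~\ref{lem:fstab circuits}, which identifies circuits of $P_{\fstab}(G)$ with connected subgraphs $G'$ whose sign pattern forces bipartiteness (the two colour classes receive values $+\lambda$ and $-\lambda$).

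In a first ``collapse'' phase, I would construct a walk from $x^0$ to a vertex $z$ with $\supp(z) \subseteq \{v\}$ (so $z$ is either $\bm{0}$ or the characteristic vector of $\{v\}$) in $O(D)$ circuit steps. The idea is to iterate $k = D, D-1, \ldots, 1$, and at iteration $k$ use a constant number of circuit steps to drive the support of the current iterate from $B(v,k)$ to $B(v,k-1)$. For each node $u \in N(v,k)$ with a positive current value, the single edge $\{u, p(u)\}$ to a BFS parent $p(u) \in N(v,k-1)$ gives, via Lemma~\ref{lem:fstab circuits}, a valid circuit that transfers value from $u$ into $p(u)$. Several such single-edge transfers can then be packaged into a single larger circuit by glueing them together into a connected bipartite subgraph of $G[B(v,k)]$, again using Lemma~\ref{lem:fstab circuits}.

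Symmetrically, in a ``grow'' phase, I would build a walk from $z$ to $x^*$ in another $O(D)$ circuit steps by iterating outward for $k = 0, 1, \ldots, D$ and installing the target values of $x^*$ on $N(v,k)$ using circuits supported on appropriate connected bipartite subgraphs. Combining both phases with the triangle-like inequality $\cdist(x^0, x^*) \le \cdist(x^0, z) + \cdist(z, x^*)$ yields the desired bound $O(D) = O(\varepsilon(v))$.

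The main obstacle will be proving that each layer can be processed in $O(1)$ circuit steps, rather than one step per node of $N(v,k)$. Several nodes of the same layer may need simultaneous changes of incommensurable magnitudes and opposite signs, and packaging these into a small number of circuits demands a careful choice of sign pattern that keeps the support subgraph connected and bipartite per Lemma~\ref{lem:fstab circuits}, while also respecting the maximum step length $\alpha$ built into the definition of a circuit step. A secondary subtlety is that circuit walks are not reversible in general, so the grow phase cannot simply run the collapse phase backwards and must be constructed on its own, even though the combinatorial ingredients are analogous.
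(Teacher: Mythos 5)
Your two-phase, layer-by-layer framework that leans on Lemma~\ref{lem:fstab circuits} is the right high-level shape, but the choice of intermediate vertex and the direction of the sweep are genuinely different from the paper, and those differences create a real gap rather than a mere ``subtlety.'' You propose to collapse to a vertex $\bm z$ with $\supp(\bm z)\subseteq\{v\}$ by iterating outside-in, pushing the mass of layer $N(v,k)$ into layer $N(v,k-1)$ in $O(1)$ circuit steps per layer. The problem is exactly the maximality of $\alpha$: when you move along a circuit that increases $\bm x_{p(u)}$ for $p(u)\in N(v,k-1)$, the step terminates as soon as some edge constraint $\bm x_{p(u)}+\bm x_w\le 1$ becomes tight, and the neighbours $w$ in $N(v,k-2)$ or in $N(v,k-1)$ are in an uncontrolled state because you haven't processed those layers yet. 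So the transfer can easily stall at step length $0$, or terminate with part of layer $k$ still positive, and you have no handle on how many extra steps repairing this costs. The ``gluing'' step has a second problem: Lemma~\ref{lem:fstab circuits} forces the circuit's support to be a connected subgraph on which adjacent nodes have opposite signs; if you want all of $N(v,k)$ to decrease, two adjacent nodes inside $N(v,k)$ cannot both lie on an edge of the circuit support, and the remaining parent edges generally do not connect up, so there is no single circuit doing what you describe without dragging inner layers along (which then determines $\alpha$ by constraints you haven't controlled). Your ``grow'' phase inherits the same issues and, as you note, cannot be obtained by reversing the collapse.

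The paper sidesteps both difficulties by not using a trivial intermediate. It routes through a rigid alternating pattern $\bm z$ determined by the smallest radius $b$ at which the ball $B(v,k)$ stops being bipartite: $\bm z_u$ alternates $0$/$1$ (switching to $1/2$ once $k\ge b$) along BFS layers. Phase I builds this pattern \emph{inside-out}, two layers at a time, precisely so that when a circuit step is taken the already-processed inner layers are in a state (all $0$ or all $1$ or $1/2$ on a layer) that forces the maximal step to end exactly at the next piece of the alternating pattern; the $0$ layers provide the ``room'' for neighbouring layers to change, and the $1$ layers provide the hard stops. Phase II then descends outside-in toward $\bm x''$, using the structure maintained behind the frontier to again pin down the circuit step endpoints. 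This careful choice of the intermediate point and the sweep directions — so that maximality of circuit steps works for you instead of against you — is the missing ingredient in your proposal.
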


\begin{proof}

Let $\bm{x}'$ and $\bm{x}''$ be two vertices of $P_{\fstab}(G)$. Let us show that $\cdist(\bm{x}',\bm{x}'')$ is at most ~$4\varepsilon(v)+c$ for some constant $c$. To do this we construct a circuit walk from $\bm{x}'$ to $\bm{x}''$.  The walk will correspond to two different phases. In Phase I we construct a circuit walk from $\bm{x}'$ to some ``well structured" point $\bm{y}'$, and in Phase II we move from $\bm{y}'$ to $\bm{x}''$ by another circuit walk. 

To simplify the exposition, in the proof we assume that $G$ is a non-bipartite graph. It will be clear from the analysis of the length of the circuit walk that the bound in the statement of the lemma is also satisfied in the bipartite case.

\bigskip

{\bf Phase I:} Let us assume that $b$ is the smallest $k$ such that the subgraph of $G$ induced by $B(v,k)$ is non-bipartite.

\smallskip
{\bf Start of Phase I:}  If $b$ is odd, we first take a circuit walk from $\bm{x}'$ to a point $\bm{z}$ with $\bm{z}_v=0$ and $\bm{z}_u=\phi$ for $u\in N(v,1)$, where $\phi:=1/2$ if $b=1$ and  $\phi:=1$ otherwise.
If $b$ is even, we start by a circuit walk from $\bm{x}'$ to a point $\bm{z}$ with $\bm{z}_v=1$, $\bm{z}_u=0$ for $u\in N(v,1)$ and $\bm{z}_u=\phi$ for $u\in N(v,2)$, where $\phi:=1/2$ if $b=2$ and  $\phi:=1$ otherwise. Initialize $t:=1$ if $b$ is odd and $t:=2$ otherwise.

\begin{claim}
\label{claim:PhaseI one}
If at the beginning of Phase I we have $t=1$,  then $4$ circuit steps are enough to reach $\bm z$ from $\bm x'$.
\end{claim}

\begin{proof}
In the proof we are going to show that from every point $\bm x'\in \{0,1/2,1\}^V$, $\bm x'\in P_{\fstab}(G)$ we can reach a desired $\bm z$  in at most $4$ circuit steps. Note, that $\bm x'\in \{0,1/2,1\}^V$, $\bm x'\in P_{\fstab}(G)$ is a weaker assumption than the assumption that $\bm x'$ is a vertex of $P_{\fstab}(G)$. We weaken the assumptions on $\bm x'$ in the proof of this claim for the sake of exposition.

First suppose that $b=1$.  There are three possible cases:
\begin{enumerate}
    \item \label{case:fstab one one one} $\bm x'_v=1$
    \item \label{case:fstab one one half} $\bm x'_v=1/2$
    \item \label{case:fstab one one zero} $\bm x'_v=0$.
\end{enumerate}
In the case~\ref{case:fstab one one one}, we have $\bm x'_w=0$ for all $w\in N(v,1)$, since $\bm x'_v+\bm x'_w\leq1$.  Then let $\bm c$ be defined as the following vector:
\begin{equation*}
	\bm{c}_u=\begin{cases}
			-1/2    & \text{  if  }  u=v\\
			1/2    & \text{  if  } u\in N(v,1)\\
			-1/2 & \text{  if  } u\in N(v,2),\,\,\bm x'_u=1\\
			0 & \text{  else}\,.
		\end{cases}
\end{equation*}
By Lemma~\ref{lem:fstab circuits}, $\bm c$ is a circuit.  Let $\bm y=\bm x'+\bm c$. Then clearly, $\bm y$ is feasible for $P_{\fstab}(G)$.  In particular, for any edge $uw$ with $u,w\in N(v,1)$, we have that $\bm y_{u}+\bm y_{w}=1$.  Similarly, for any edge $uw$ with  $u\in N(v,1)$ and  $w\in N(v,2)$, we have that $\bm y_{u}+\bm y_{w}\leq1$. Furthermore,  $\bm y$ is one circuit step from $\bm x'$ since $b=1$ implies that there exists an edge $uw$, $u,w\in N(v,1)$. Now, let $\bm c'$ be defined as the following vector:
\begin{equation*}
	\bm{c}'_u=\begin{cases}
			-1/2    & \text{  if  }  u=v\\
			0       & \text{  else}\,.
		\end{cases}
\end{equation*}
By Lemma~\ref{lem:fstab circuits}, $\bm c'$ is a circuit. Moreover, $\bm y+\bm c'$ is the desired point $\bm z$. Note, that $\bm z$ is one circuit step from $\bm y$ as $\bm z_v=0$.  Hence, in this case $2$ circuit steps are enough to reach $\bm z$ from $\bm x'$.

In the case~\ref{case:fstab one one half}, we have $\bm x'_w\leq1/2$ for all $w\in N(v,1)$.  Then let $\bm c$ be defined as the following vector:
\begin{equation*}
	\bm{c}_u=\begin{cases}
			-1/2    & \text{  if  }  u=v\\
			1/2    & \text{  if  } u\in N(v,1),\,\, \bm x'_u=0\\
			-1/2 & \text{  if  } u\in N(v,2),\,\, \bm x'_u=1\\
			0 & \text{  else}\,.
		\end{cases}
\end{equation*}
By Lemma~\ref{lem:fstab circuits}, $\bm c$ is a circuit.  Let $\bm z=\bm x'+\bm c$. Then clearly, $\bm z$ is feasible for $P_{\fstab}(G)$.  In particular, for any edge $uw$ with $u,w\in N(v,1)$, we have that $\bm z_{u}+\bm z_{w}=1$.  Similarly, for any edge $uw$ with $u\in N(v,1)$ and $w\in N(v,2)$, we have that $\bm z_u+\bm z_w\leq1$. Furthermore,  $\bm z$ is one circuit step away from $\bm x'$ as $\bm z_v=0$. Thus $\bm z$ is the desired point, and in this case $1$ circuit step is enough to reach  $\bm z$ from $\bm x'$.

In the case~\ref{case:fstab one one zero}, let $\bm c$ be defined as the following vector:
\begin{equation*}
	\bm{c}_u=\begin{cases}
			1/2    & \text{  if  }  u=v\\
			-1/2    & \text{  if  } u\in N(v,1),\,\, \bm x'_u>0\\
			0 & \text{  else}\,.
		\end{cases}\,
\end{equation*}
By Lemma~\ref{lem:fstab circuits}, $\bm c$ is a circuit.  Let $\bm y=\bm x'+\alpha \bm c$ be the point which is one circuit step from $\bm x'$, where $\alpha\geq 0$. Clearly $\alpha \in \{0,1,2\}$. We have $\alpha\geq 1$, since $\bm x'+\bm c$ is feasible for $P_{\fstab}(G)$. Thus $\alpha \in \{1,2\}$, hence $\bm y\in \{0,1/2,1\}^V$ and $\bm y_v \in \{1/2,1\}$. Due to the considered cases~\ref{case:fstab one one one} and~\ref{case:fstab one one half}, we know that from $\bm y$ we can reach a desired $\bm z$ in at most $3$ circuit steps. Thus, a desired point $\bm z$ can be reached from $\bm x'$ in at most $4$ circuit steps.

Now, suppose $b>1$.  We have the same three cases as when $b=1$, and we will refer to them identically.

In the case~\ref{case:fstab one one one}, we have that $\bm x'_w=0$ for all $w\in N(v,1)$.  Then let $\bm c$ be defined as the following vector:
\begin{equation*}
	\bm{c}_u=\begin{cases}
			-1/2    & \text{  if  }  u=v\\
			1/2    & \text{  if  } u\in N(v,1)\\
			-1/2 & \text{  if  } u\in N(v,2),\,\, \bm x'_u>0\\
			0 & \text{  else}\,.
		\end{cases}\,
\end{equation*}
By Lemma~\ref{lem:fstab circuits}, $\bm c$ is a circuit.  Let $\bm y=\bm x'+\alpha \bm c$ be the point which is one circuit step from $\bm x'$, where $\alpha\geq 0$. Clearly $\alpha \in \{0,1,2\}$. We have $\alpha\geq 1$, since $\bm x'+\bm c$ is feasible for $P_{\fstab}(G)$. Thus $\alpha \in \{1,2\}$.

First, suppose that $\alpha=1$. Then let $\bm c'$ be the following vector:
\begin{equation*}
	\bm{c}'_u=\begin{cases}
			-1/2    & \text{  if  }  u=v\\
			1/2    & \text{  if  } u\in N(v,1)\\
			-1/2 & \text{  if  } u\in N(v,2),\,\, \bm y_u>0\\
			0 & \text{  else}\,.
		\end{cases}\,
\end{equation*}
By Lemma~\ref{lem:fstab circuits}, $\bm c'$ is a circuit.  Let $\bm z=\bm y+\bm c'$.  Then $\bm z$ is feasible and $\bm z$ is one circuit step from $\bm y$ as $\bm z_v=0$. Thus, a desired point $\bm z$ is at most $2$ circuit steps from $\bm x'$ if $\alpha=1$. Now, suppose $\alpha=2$.  Then $\bm z=x'+2 \bm c$ is a desired  point.  Thus, if $\alpha=2$ then $1$ circuit step is enough to reach $\bm z$ from $\bm x'$.

In the case~\ref{case:fstab one one half}, we have that $\bm x'_w\leq1/2$ for all $w\in N(v,1)$.  Then let $\bm c$ be defined as the following vector:
\begin{equation*}
	\bm{c}_u=\begin{cases}
			1/2    & \text{  if  }  u=v\\
			-1/2    & \text{  if  } u\in N(v,1),\,\, \bm x'_u=1/2\\
			0 & \text{  else}\,.
		\end{cases}\,
\end{equation*}
By Lemma~\ref{lem:fstab circuits}, $\bm c$ is a circuit.  Let $\bm y=\bm x'+\bm c$.  Then $\bm y$ is feasible, and is one circuit step from $\bm x'$.  Note that $\bm y$ satisfies the conditions of case~\ref{case:fstab one one one}, thus a desired point $\bm z$ can be achieved in at most $2$ circuit steps from the point $\bm y$.  Thus, $\bm z$ is at most $3$ circuit steps from $\bm x'$.

In the case~\ref{case:fstab one one zero}, let $\bm c$ be defined as the following vector:
\begin{equation*}
	\bm{c}_u=\begin{cases}
			1/2    & \text{  if  }  u=v\\
			-1/2    & \text{  if  } u\in N(v,1),\,\, \bm x'_u>0\\
			0 & \text{  else}\,.
		\end{cases}\,
\end{equation*}
By Lemma~\ref{lem:fstab circuits}, $\bm c$ is a circuit.  Let $\bm y=\bm x'+\alpha \bm c$ be the point which is one circuit step from $\bm x'$, where $\alpha\geq 0$. Clearly $\alpha \in \{0,1,2\}$. We have $\alpha\geq 1$, since $\bm x'+\bm c$ is feasible for $P_{\fstab}(G)$. Thus $\alpha \in \{1,2\}$, hence $\bm y\in \{0,1/2,1\}^V$ and $\bm y_v \in \{1/2,1\}$. Due to the considered cases~\ref{case:fstab one one one} and~\ref{case:fstab one one half}, we know that from $\bm y$ we can reach a desired $\bm z$ in at most $3$ circuit steps. Thus, a desired point $\bm z$ can be reached from $\bm x'$ in at most $4$ circuit steps.

Therefore, in all cases, we need at most $4$ circuit steps to reach $\bm z$ from $\bm x'$.
\end{proof}

The proof of the next claim is essentially identical to the proof of Claim~\ref{claim:PhaseI one}.
\begin{claim}
\label{claim:PhaseI two}
If at the beginning of Phase I we have $t=2$,  then $6$ circuit steps are enough to reach $\bm z$ from $\bm x'$.
\end{claim}

\smallskip

{\bf Invariants for $\bm{z}$ and $t$ in Phase I:}
During Phase I, we update  $\bm{z}$ and $t$ such that at each moment of time the following holds for all $u\in N(v,k)$, for all $k\leq t$:
\begin{equation}\label{eq:phaseI_invariant}\tag{$\star$}
	\bm{z}_u=\begin{cases}
			0    & \text{  if  }  k\equiv b+1\mod 2\\
			1    & \text{  if  } k < b \text{  and  } k\equiv b\mod 2\\
			1/2 & \text{  if  } k\ge b \text{  and  } k\equiv b\mod 2\,.
		\end{cases}\,
\end{equation}
By construction, $\bm{z}$ and $t$ defined at the beginning of Phase I satisfy condition~\eqref{eq:phaseI_invariant} for all $u\in B(v,t)$. At each step (except possibly the last one) of Phase I, $t$ is increased by $2$ and the point $\bm{z}$ is updated to satisfy~\eqref{eq:phaseI_invariant} for all $u\in B(v,t)$. In the end of Phase I, $t$ equals $\varepsilon(v)$, and hence~\eqref{eq:phaseI_invariant} holds for all $u\in V$. 

\smallskip

{\bf Step of Phase I:} At each step we change coordinates of point $\bm{z}$ corresponding to the nodes in $N(v, t+1)$ and $N(v, t+2)$.

If $t < b-2$, we walk from $\bm{z}$ to the point $\bm{z}'$, such that for all $u\in N(v,k)$, for all $k\leq t+1$
$$
	\bm{z}'_u=\begin{cases}
			1    & \text{  if  }  k\equiv b+1\mod 2\\
			0    & \text{  if  }  k\equiv b\mod 2\,.
		\end{cases}\,
$$
Such a point $\bm{z}'$ can be reached from $\bm{z}$ in at most two circuit steps.
From $\bm{z}'$ we walk to the point $\bm{z}''$ such that for all $u\in N(v,k)$, for all $k\leq t+2$
$$
	\bm{z}''_u=\begin{cases}
			0    & \text{  if  }  k\equiv b+1\mod 2\\
			1    & \text{  if  }  k\equiv b\mod 2\,.
		\end{cases}\,
$$
A point $\bm{z}''$ with above properties can be reached from $\bm{z}'$ in one circuit step. Thus, in this case we are able to define $\bm{z}''$ to be the new point $\bm{z}$ and increase $t$ by $2$ using at most three circuit steps.

If $t=b-2$, we walk from $\bm{z}$ to the point $\bm{z}'$, such that for all $u\in N(v,k)$, for all $k\leq t+1$
$$
	\bm{z}'_u=\begin{cases}
			1    & \text{  if  }  k\equiv b+1\mod 2\\
			0    & \text{  if  }  k\equiv b\mod 2\,.
		\end{cases}\,
$$
Such a point $\bm{z}'$ can be reached from $\bm{z}$ in at most two circuit steps. From $\bm{z}'$ we walk to the point $\bm{z}''$ such that for all $u\in N(v,k)$, for all $k\leq t+2$
$$
	\bm{z}''_u=\begin{cases}
			1/2    & \text{  if  }  k\equiv b+1\mod 2\\
			1/2    & \text{  if  }  k\equiv b\mod 2\,,
		\end{cases}\,
$$
where $k$ is such that $u\in N(v,k)$. A point $\bm{z}''$ with above properties can be reached from $\bm{z}'$ in one circuit step. From $\bm{z}''$ we walk to the point $\bm{z}'''$ such that for all $u\in N(v,k)$, for all $k\leq t+2$
$$
	\bm{z}'''_u=\begin{cases}
			0    & \text{  if  }  k\equiv b+1\mod 2\\
			1    & \text{  if  }  k < b \text{  and  } k\equiv b\mod 2\\
			1/2 & \text{  if  } k\ge b \text{  and  } k=b\mod 2\,.
		\end{cases}\,
$$
A point $\bm{z}'''$ with above properties can be reached from $\bm{z}''$ in one circuit step. Thus, in this case we are able to define $\bm{z}'''$ to be the new point $\bm{z}$ and increase $t$ by $2$ using at most four circuit steps.

If $t \ge b$, we walk from $\bm{z}$ to the point $\bm{z}'$, such that for all $u\in N(v,k)$, for all $k\leq t+1$
$$
	\bm{z}'_u=\begin{cases}
			1/2    & \text{  if  } k\equiv b+1\mod 2\\
			1/2    & \text{  if  }  k < b \text{  and  } k\equiv b\mod 2\\
			0 & \text{  if  } k\ge b \text{  and  } k\equiv b\mod 2\,.
		\end{cases}\,
$$
Such a point $\bm{z}'$ can be reached from $\bm{z}$ in one circuit step. From $\bm{z}'$ we walk to the point $\bm{z}''$ such that for all $u\in N(v,k)$, for all $k\leq t+2$
$$
	\bm{z}''_u=\begin{cases}
			0    & \text{  if  } k=b+1\mod 2\\
			1    & \text{  if  } k < b \text{  and  } k\equiv b\mod 2\\
			1/2 & \text{  if  } k\ge b \text{  and  } k\equiv b\mod 2\,.
		\end{cases}\,
$$
A point $\bm{z}''$ with above properties can be reached from $\bm{z}'$ in one circuit step. Thus, in this case we are able to define $\bm{z}''$ to be the new point $\bm{z}$ and increase $t$ by $2$ using only two circuit steps.

Note, that if at the beginning of a Phase step we have $\varepsilon(v)= t+1$, we are in the case $t\ge b$. In this case, we need only two circuits steps to update $\bm{z}$ and increase $t$ by~$1$.

\bigskip

{\bf Phase II:} We are now at the ``well structured'' point $\bm{y}'=\bm{z}$.  In this Phase, we construct a circuit walk from the current point $\bm{z}$ to the vertex $\bm{x}''$. Recall that at the end of Phase I, $\bm{z}$ satisfies~\eqref{eq:phaseI_invariant} for all $u\in V$ and $t=\varepsilon(v)$.

\smallskip

{\bf Start of Phase II:}
If for $w\in N(v,t)$ we have $\bm{z}_w=0$, then we first take two circuit steps from the current $\bm{z}$ to the point $\bm{z'}$, such that for all $u\in N(v,k)$, for all $k\leq t-1$
$$
	\bm{z}'_u=\begin{cases}
			0    & \text{  if  }  k\equiv b+1\mod 2\\
			1    & \text{  if  }  k < b \text{  and  } k\equiv b\mod 2\\
			1/2 & \text{  if  } k\ge b \text{  and  } k\equiv b\mod 2
		\end{cases}\,
$$
and for $u\in N(v,t)$
$$
	\bm{z}'_u=\begin{cases}
			1/2    & \text{  if  }  \bm{x}''_u\in \{1/2, 1\}\\
			0 & \text{  if  } \bm{x}''_u=0\,.
		\end{cases}\,
$$

Now we take two circuit steps from $\bm{z}'$ to $\bm{z}''$, such that for all $u\in N(v,k)$, for all $k\leq t-2$
$$
	\bm{z}''_u=\begin{cases}
			0    & \text{  if  }  k\equiv b+1\mod 2\\
			1    & \text{  if  }  k < b \text{  and  } k\equiv b\mod 2\\
			1/2 & \text{  if  } k\ge b \text{  and  } k\equiv b\mod 2
		\end{cases}\,
$$
and for $u\in N(v,t-1)$
we have
$$
	\bm{z}''_u=\begin{cases}
			0 & \text{  if  } uw\in E \text{  for some  } w\in N(v,t),\, \bm{x}''_w=1\\
			1/2 & \text{  otherwise  } 
		\end{cases}\,
$$
and for $u\in N(v,t)$ we have $\bm{z}''_u=\bm{x}''_u$. Thus, we define $\bm{z}''$ to be the new point $\bm{z}$ and decrease $t$ by $1$ using at most four circuit steps.

\smallskip

{\bf Invariants for $\bm{z}$ and $t$ in Phase II:}
During Phase II, we update  $\bm{z}$ and $t$ such that at each moment of time the following holds for all $u\in N(v,k)$, for all $k\leq t-1$
\begin{equation}\label{eq:phaseII_invariant1}\tag{$\star\star$}
	\bm{z}_u=\begin{cases}
			0    & \text{  if  }  k\equiv b+1\mod 2\\
			1    & \text{  if  }  k < b \text{  and  } k\equiv b\mod 2\\
			1/2 & \text{  if  } k\ge b \text{  and  } k\equiv b\mod 2
		\end{cases}\,
\end{equation}
and for $u\in N(v,t)$, we have
\begin{equation}\label{eq:phaseII_invariant2}\tag{$\star\star\star$}
	\bm{z}_u=\begin{cases}
			0    & \text{  if  }  \max\{\bm{x}''_w\,:\,w\in N(v,t+1),\, uw\in E \}=1\\
			1/2    &  \text{  if  } \max\{\bm{x}''_w\,:\,w\in N(v,t+1),\, uw\in E \}=1/2\\
			\phi    &  \text{  otherwise}\,, 
		\end{cases}\,
\end{equation}
where $\phi:=1/2$ if $b\ge t$ and  $\phi:=1$ if $b<t$. Moreover, for all $u\in N(v,k)$, $k>t$, we have $\bm{z}_u=\bm{x}''_u$. Again by construction, $\bm{z}$ and $t$ defined at the beginning of Phase II satisfy condition~\eqref{eq:phaseII_invariant1} for all $u\in B(v,t-1)$ and condition~\eqref{eq:phaseII_invariant2} for all $u\in N(v,t)$. At each step (except possibly the last one) of Phase II, $t$ is decreased by $2$ and the point $\bm{z}$ is updated to satisfy condition~\eqref{eq:phaseII_invariant1} for all $u\in B(v,t-1)$ and condition~\eqref{eq:phaseII_invariant2} for all $u\in N(v,t)$. At every moment of Phase II, we have $t=b\mod 2$.

{\bf Step of Phase II:}  

For all points on the circuit walk in a step of Phase II, we have $\bm{z}_u=\bm{x}''_u$ for every $u\in N(v,k)$, for all $k>t$.

If at the beginning of a step of Phase II  we have $t\ge b+2$, we take a circuit step from $\bm{z}$ to a point $\bm{z}'$, such that for all $u\in N(v,k)$, for all $k\leq t-1$
$$
	\bm{z}'_u=\begin{cases}
			1/2    & \text{  if  }  k\equiv b+1\mod 2\\
			1/2   & \text{  if  }  k < b \text{  and  } k\equiv b\mod 2\\
			0 & \text{  if  } k\ge b \text{  and  } k\equiv b\mod 2
		\end{cases}\,
$$
and for $u\in N(v,t)$, we have 
$$
	\bm{z}'_u=\begin{cases}
			1/2    & \text{  if  }  \bm{x}''_u\in \{1/2, 1\}\\
			0 & \text{  if  } \bm{x}''_u=0\,.
		\end{cases}\,
$$
From $\bm{z}'$ we take a circuit step to a point $\bm{z}''$, such that for all $u\in N(v,k)$, for all $k\leq t-2$
$$
	\bm{z}''_u=\begin{cases}
			0    & \text{  if  }  k\equiv b+1\mod 2\\
			1   & \text{  if  }  k < b \text{  and  } k\equiv b\mod 2\\
			1/2 & \text{  if  } k\ge b \text{  and  } k\equiv b\mod 2
		\end{cases}\,
$$
for $u\in N(v,t-1)$, we have 
$$
	\bm{z}''_u=\begin{cases}
			1/2    & \text{  if  }  \bm{x}''_u\in \{1/2, 1\}\\
			0 & \text{  if  } \bm{x}''_u=0
		\end{cases}\,
$$
and for $u\in N(v,t)$ we have $\bm{z}''_u=\bm{x}''_u$.
From $\bm{z}''$ we take a circuit step to $\bm{z}'''$ such that for all $u\in N(v,k)$, for $k\leq t-2$
$$
	\bm{z}'''_u=\begin{cases}
			1/2    & \text{  if  } k\equiv b+1\mod 2\\
			1/2   & \text{  if  } k < b \text{  and  } k\equiv b\mod 2\\
			0 & \text{  if  } k\ge b \text{  and  } k\equiv b\mod 2\,.
		\end{cases}\,
$$
Moreover, for $u\in N(v,t-1)\cup N(v,t)$ we have $\bm{z}'''_u=\bm{x}''_u$. It is not hard to see, that from $\bm{z}'''$ it takes at most one more additional circuit step to a point satisfying condition~\eqref{eq:phaseII_invariant1} for all $u\in B(v,t-3)$ and condition~\eqref{eq:phaseII_invariant2} for all $u\in N(v,t-2)$. Thus, for $t\ge b+2$ it takes at most four circuit steps to update $\bm{z}$ and decrease $t$ by $2$.

In the case when $t<b+2$, in the same way $t$ can be decreased by $2$ and the point $\bm{z}$ can be updated in at most four circuit steps. Note that for $u\in V$ we have $\bm{x}''_u=1/2$ only if $k\ge b$ or $\bm{x}''_w=1/2$ for some $w\in N(v,k+1)$, $uw\in E$, where $k$ is such that $u\in N(v,k)$. Furthermore, for the very last Phase II step we need only three circuit steps if $t=1$ and only one circuit step if $t=0$.

\bigskip

{\bf Number of Circuit Steps in the Constructed Walk:}  The total number of circuit steps needed in both Phases is at most
$4\varepsilon(v)+c$ for some constant $c$.

Indeed, to start Phase I we need at most a constant number of circuit steps. With each step of Phase I, $t$ increases by $2$ and we use at most $4$ circuit steps, until $t=\varepsilon(v)$ or $t=\varepsilon(v)-1$. In the latter case, we still need at most $4$ circuit steps to finish Phase I and increase $t$ by $1$. We also need at most $4$ circuit steps to start Phase II by updating $t$ to be equal to $\varepsilon(v)-1$.

With each step of Phase II, $t$ decreases by $2$ and we use at most $4$ circuit steps.  This is done until $t=0$ or $t=1$.  In both cases, we need an additional constant number of steps to finish Phase II. This gives the upper bound of $2\varepsilon(v)+2\varepsilon(v)+c$ for some constant $c$ on the total number of circuit steps in the constructed circuit walk from $\bm{x}'$ to $\bm{x}''$.

\end{proof}

Lemma~\ref{lem:stab ecce} immediately implies an upper bound on the diameter of the Fractional Stable Set polytope in terms of the \textit{diameter of the graph $G$}, defined as $\diam(G):=\max_{v\in V}\{\varepsilon(v)\}$.

\begin{cor}
For a connected graph $G=(V,E)$ with at least two nodes, $\cd(P_{\fstab}(G))$ is $\mathcal{O}(\diam(G))$.
\end{cor}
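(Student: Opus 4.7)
The plan is to apply Lemma~\ref{lem:stab ecce} with an arbitrary choice of node. The key feature of that lemma is that its bound $\cd(P_{\fstab}(G)) = \mathcal{O}(\varepsilon(v))$ holds for \emph{every} node $v \in V$, so we may pick any node we like and translate the resulting eccentricity bound into a diameter bound.

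Concretely, I would fix any node $v \in V$ and invoke Lemma~\ref{lem:stab ecce} to obtain $\cd(P_{\fstab}(G)) = \mathcal{O}(\varepsilon(v))$. By the definition $\diam(G) = \max_{u \in V} \varepsilon(u)$, we have $\varepsilon(v) \le \diam(G)$ for every $v$, and combining these two facts immediately yields $\cd(P_{\fstab}(G)) = \mathcal{O}(\diam(G))$, as claimed. In fact, the same argument gives the sharper statement that $\cd(P_{\fstab}(G))$ is $\mathcal{O}(\min_{u \in V} \varepsilon(u))$, i.e.\ bounded by the radius of $G$, which can be roughly half of $\diam(G)$; the corollary as stated only asks for the weaker diameter bound.

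There is no meaningful obstacle here: all the substantive work is contained in Lemma~\ref{lem:stab ecce}, whose two-phase circuit walk construction routes any vertex-to-vertex pair through an intermediate canonical point defined on the BFS layers $N(v,k)$, with parities dictated by the smallest non-bipartite ball radius $b$. The present corollary is purely a structural observation linking eccentricity and graph diameter, and amounts to choosing the center $v$ appropriately before applying the lemma.
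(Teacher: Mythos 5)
Your proof is correct and matches the paper's argument exactly: apply Lemma~\ref{lem:stab ecce} to any node $v$ and bound $\varepsilon(v)\le\diam(G)$ by definition. Your observation that one could instead take $v$ minimizing eccentricity, giving a radius bound, is a valid (and slightly sharper) remark, though the corollary as stated needs only the diameter bound.
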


\bibliographystyle{plain}
\bibliography{Bibliography}
\end{document}